\newcites{bk,art,artno}{,,}
\newcommand*\bigcdot{\mathpalette\bigcdot@{.5}}
\newcommand*\bigcdot@[2]{\mathbin{\vcenter{\hbox{\scalebox{#2}{$\m@th#1\bullet$}}}}}
\newcommand\norm[1]{\left\lVert#1\right\rVert}
\newcommand\klam[1]{(#1)}
\newcommand*\al{\pmb \alpha}
\newcommand*\bet{\pmb \beta}
\newcommand*\gam{\pmb \gamma}
\newcommand*\del{\pmb \delta}
\newcommand*\eps{\pmb \epsilon}
\newcommand*\vphipmb{\pmb \varphi}
\newcommand*\psipmb{\pmb \psi}
\newcommand*\vpmb{\pmb{v}}
\newcommand*\wpmb{\pmb{w}}
\newcommand*\zpmb{\pmb{z}}
\newcommand*\upmb{\pmb{u}}
\newcommand*\xpmb{\pmb{x}}
\newcommand*\ypmb{\pmb{y}}
\newcommand*\gpmb{\pmb{g}}
\newcommand*\cpmb{\pmb{c}}
\newcommand*\nupmb{\pmb{\nu}}
\newcommand*\ybet{\pmb{y}_{\bet}}
\newcommand*\yplus{\pmb{y}_+}
\newcommand*\mbbc{\mathbb{C}}
\newcommand*\mbbn{\mathbb{N}}
\newcommand*\mbbr{\mathbb{R}}
\newcommand*\Ltwo{L^2(0,T;\mathbb{R}^M)}
\newcommand*\Astar{A_{\star}}
\newcommand*\Wcirc{\widehat{W}_{\circ}}
\newcommand*\alstar{\pmb \alpha_{\star}}
\newcommand*\alcirc{\pmb \alpha_{\circ}}
\newcommand*\mustar{\mu^{\star}}
\newcommand*\kertxt{\textnormal{ker}}
\newcommand*\imtxt{\textnormal{im}}
\newcommand*\spantxt{\textnormal{span}}
\newcommand*\ranktxt{\textnormal{rank}}
\newcommand*\obsmatzero{\mathcal{O}_N(C,A(\alcirc))}
\newcommand*\contmatzero{\mathcal{C}_N(A(\alcirc),B)}
\newcommand*\contmatzeroSHORT{\mathcal{C}_N^\circ}
\newcommand*\obsmatzeroSHORT{\mathcal{O}_N^\circ}
\DeclareMathOperator*{\argmax}{arg\,max}
\DeclareMathOperator*{\argmin}{arg\,min}
\newcommand{\comment}[1]{\textcolor{black}{#1}}
\definecolor{julien}{rgb}{1, 0, 0}
\definecolor{simon}{rgb}{0, 0.5, 0}
\newcommand{\TheTitle}{Gauss-Newton oriented greedy algorithms for the reconstruction of operators in nonlinear dynamics}
\newcommand{\TheAuthors}{S. Buchwald, G. Ciaramella and J. Salomon}
\title{{\TheTitle}}
\author{S. Buchwald\thanks{Mathematics Department, Universit\"at Konstanz, Germany ({\tt simon.buchwald@uni-konstanz.de}).}
	\and
	G. Ciaramella\thanks{MOX, Dipartimento di Matematica, Politecnico di Milano ({\tt gabriele.ciaramella@polimi.it}).}
	\and 
	J. Salomon\thanks{ANGE team, INRIA Paris, France ({\tt julien.salomon@inria.fr}).}
}
\begin{document}
\nolinenumbers
\maketitle

\begin{abstract}
This paper is devoted to the development and convergence analysis of greedy reconstruction algorithms based on the strategy presented in [Y. Maday and J. Salomon, Joint Proceedings of the 48th IEEE Conference on
Decision and Control and the 28th Chinese Control Conference, 2009, pp. 375--379]. These procedures allow the design of a sequence of control functions that ease the  identification of unknown operators in nonlinear dynamical systems. 
The original strategy of greedy reconstruction algorithms is based on an offline/online decomposition of the reconstruction process and an ansatz for the unknown operator obtained by an a priori chosen set of linearly independent matrices. 
In the previous work [S. Buchwald, G. Ciaramella and J. Salomon, SIAM J. Control Optim., 59(6), pp. 4511-4537], convergence results were obtained in the case of linear identification problems. We tackle here the more general case of nonlinear systems. 
More precisely, \comment{we introduce a new greedy algorithm based on the linearized system. Then, we show that the controls obtained with this new algorithm} lead to the local convergence of the classical Gauss-Newton method applied to the online nonlinear identification problem. 
We then extend this result to the controls obtained on nonlinear systems where a local convergence result is also proved. The main convergence results are obtained for the reconstruction of drift operators \comment{in dynamical systems with linear and bilinear control structures}.
\end{abstract}
	
\begin{keywords}
Gauss-Newton method, operator reconstruction, Hamiltonian identification, quantum control problems,
inverse problems, greedy reconstruction algorithm, control theory
\end{keywords}
	
\begin{AMS}
65K10, 65K05, 81Q93, 34A55, 49N45, 34H05, 93B05, 93B07
\end{AMS}

\section{Introduction}\label{sec:intro}

This paper is concerned with the development and the analysis of a new class of numerical methods for the operator reconstruction in controlled nonlinear differential systems.
The identification of unknown operators and parameters characterizing dynamical systems is a typical problem in several fields of applied science. 
In general, this is understood as an inverse problem, where the goal is to best fit simulated and experimental data. 
However, when a system is affected by input forces that can be controlled by an external user, the data used in the fitting process can be manipulated. 
If the input forces are not properly chosen, the fitting process can result in a very poor quality of the reconstructed parameters or operators.
Thus, it is natural to look for a set of input forces that allows one to generate good data permitting the best possible reconstruction.
This is a typical case in the field Hamiltonian identification in quantum mechanics \cite{Bonnabel2009,Badouin2008,Donovan2014,Fu_2017,Geremia2001,Geremia2003,Geremia2000,lebris:ham_id,Wang,Xue2019,Zhang2014,Zhou2012}, or in engineering in the context of state space realization \cite{DeSchutter2000,Juang1985,Sontag1998} and optimal design of experiments \cite{Atkinson2011,Bauer,Alexanderian2016}.

In this paper, we focus on the analysis and development of a class of greedy-type reconstruction algorithms (GR) that were introduced in \cite{madaysalomon} for Hamiltonian identification problems, further developed and analyzed in \cite{siampaper}, and later adapted to the identification of probability distributions for parameters in the context of quantum systems in \cite{spinpaper}.
This approach decomposes the identification process into an offline phase, where the control functions are computed by a GR algorithm, and an online phase, where the controls are used to generate experimental data to be used in an inverse problem for the final reconstruction of the unknown operator.
In \cite{siampaper}, a first detailed convergence analysis of this strategy was provided for the identification of the control matrix in a linear input/output system. Based on this analysis, the authors developed a new more efficient and robust numerical variant of the standard greedy reconstruction algorithm.
It was then shown in \cite{spinpaper} that this strategy is also able to reconstruct the probability distribution of control inhomogeneities for a spin ensemble in Nuclear Magnetic Resonance; see, e.g., \cite{glaser_training_2015,LibroQuantum}.
\comment{Even though the mentioned references focus also on GR algorithms for nonlinear problems, none of them presents a theoretical analysis for nonlinear problems.}

\comment{
When dealing with nonlinear problems it is common to consider linearization techniques. Classical methods for nonlinear problems, like the Gauss-Newton method \cite{kaltenbacher2008iterative}, are often derived from such linearizations.
Following this approach, we introduce a new class of GR algorithms, derived from the linearization of the dynamical system in a neighborhood of the unknown operator. We call these methods LGR (linearized greedy reconstruction algorithms) and present a detailed theoretical analysis. This is the first novelty of this work.
}

\comment{
Our analysis of LGR reveals that its hidden goal is to construct a set of control functions that attempt to make a specific matrix full rank in a neighborhood of the solution.
It turns out that this matrix is actually the Gauss-Newton (GN) matrix, namely the Jacobian of the nonlinear residual.
In this respect, the second main novelty of this paper is that we can interpret the procedure of our LGR as a process that computes controls making GN locally well defined and convergent, and thereby the online nonlinear reconstruction problem (locally) solvable. 
Thus, once the control functions are computed by LGR, GN can be used with guaranteed convergence.}

\comment{
We provide a detailed analysis of LGR} for two classes of problems: the reconstruction of the drift matrix in linear input/output systems and the reconstruction of an Hamiltonian matrix in skew-symmetric bilinear systems. Both cases represent truly nonlinear problems, since the unknown operators act on the states of the systems.
Notice that the analysis that we are going to present for the drift matrix is also valid in the case of the reconstruction of the control matrix in a linear input/output systems, hence includes the case considered in \cite[Section 5]{siampaper}. Thus, this part of the present work is a substantial extension of the results of \cite{siampaper}.

\comment{
The development and analysis of LGR turns out to be also useful to obtain a first analysis of the original GR algorithm applied to nonlinear systems. This is the third main novelty of this paper, and it is achieved by relating the behaviors of GR and LGR: under appropriate controllability and observability assumptions, we show that the controls generated by GR are suitable also for LGR and thus make the GN Jacobian matrix full rank. The hypotheses of this analysis are discussed and studied for three classes of problems: the reconstruction of the drift matrix in linear input/output systems, the reconstruction of an Hamiltonian matrix in skew-symmetric bilinear systems, and the reconstruction of the control matrix in nonlinear dynamical systems with linear control structure.}



The two GR and LGR approaches are compared by direct numerical experiments \comment{and by a global convergence analysis in a specific case.}
These show that GR and LGR are comparable when working locally near the solution. 
However, the GR applied directly to the original nonlinear system is superior when only poor information about the solution is available. 

The paper is organized as follows.
In Section \ref{sec:notation}, the notation used throughout this work is fixed.
\comment{Section \ref{sec:GeneralSetting} describes linearized systems and recalls GN for general reconstruction problems.
The LGR algorithm is introduced in Section \ref{sec:GRalgorithm}.}
In sections \ref{sec:LinearReconstruction} and \ref{sec:BilinearReconstruction}, we present analyses of LGR for the reconstruction of drift matrices in linear systems and Hamiltonian matrices in bilinear systems, respectively.
Section \ref{sec:nonlinear} focuses on GR for nonlinear problems, and
a corresponding analysis is provided in section \ref{sec:nonlinear:analysis}.
Within section \ref{sec:Improvements}, we recall and extend an optimized greedy reconstruction (OGR) algorithm introduced in \cite{siampaper}.
\comment{In section \ref{sec:global}, a global analysis is provided for a specific case of a system with bilinear control structure.}
The LGR, GR and OGR algorithms are then tested numerically in section \ref{sec:numerics}.

\section{Notation}\label{sec:notation}
Consider a positive natural number $N$. 
We denote by $\langle{\bf v},{\bf w}\rangle:={\bf v}^\top{\bf w}$, for any
${\bf v},{\bf w}\in \mathbb{R}^N$, the usual real scalar product on $\mbbr^N$, and by
$\norm{\, \cdot \,}_2$ the corresponding norm. For any $A \in \mathbb{R}^{N \times N}$, 
$[A]_{j,k}$ is the $j,k$ (with $j,k \leq N$) entry of $A$, and
the notation $A_{[1:k,1:j]}$ indicates the upper left submatrix of $A$ of size $k \times j$,
namely, $[A_{[1:k,1:j]}]_{\ell,m}:=[A]_{\ell,m}$ for $\ell=1,\dots,k$ and $m=1,\dots,j$.
Similarly, $A_{[1:k,j]}$ denotes the column vector in $\mathbb{R}^k$ corresponding to the
first $k$ elements of the column $j$ of $A$.
Additionally, $\imtxt(A)$ is the image of $A$, and $\kertxt(A)$ its kernel.
We indicate by $\mathfrak{so}(N)$ the space of skew-symmetric matrices in $\mbbr^{N\times N}$. 
Moreover, when talking about symmetric matrices, PD and PSD stand for positive definite and semidefinite, respectively.
By $(A,B,C)$ we denote the input/output dynamical system
\begin{equation}\label{eq:IOSyst}
	\xpmb(t)=C\ypmb(t),\quad\dot{\ypmb}(t)=A\ypmb(t)+B\eps(t),\quad \ypmb(0)=\ypmb^0.
\end{equation}
\comment{The control functions $\eps$ belong to the admissible set $E_{ad}$, which is assumed to be a non-empty and weakly compact subset of $\Ltwo$, $M\in\mbbn$.
This guarantees well posedness of the optimal control problems considered in this work.
Moreover, we assume that $E_{ad}$ contains $\eps\equiv0$ as an interior point.\footnote{This is a reasonable assumption, since it is satisfied, e.g., for standard box constraints, which are quite often used in the applications.}
This hypothesis is used in our analysis in sections \ref{sec:LinearReconstruction}, \ref{sec:BilinearReconstruction} and \ref{sec:nonlinear}.}
For an interval $X\subset\mbbr$, the notation $\phi:X\rightrightarrows\mbbr^N$ indicates that $\phi$ is a set-valued correspondence, i.e. $\phi(x)\subset\mbbr^N$ is a set for $x\in X$.
Finally, we denote by $\mathcal{B}^N_r(x)\subset\mbbr^N$ the $N$-dimensional ball with radius $r>0$ and center $x\in\mbbr^N$.

\section{\comment{Linearized systems and Gauss-Newton method (GN)}}\label{sec:GeneralSetting}
Consider a state $\ypmb(t)\in\mbbr^N$, $N\in\mbbn$, whose time evolution is governed by the system of ordinary differential equations (ODE)
\begin{equation}\label{eq:ODE}
	\dot{\ypmb}(t)=f(\Astar,\ypmb(t),\eps(t)), \; t \in(0,T], \quad\ypmb(0)=\ypmb^0,
\end{equation}
where $\ypmb^0\in\mathbb{R}^N$ is the initial state and $\eps\in E_{ad}$ denotes a control function belonging to $E_{ad}$.
The operator $\Astar$ is unknown and assumed to lie in the space spanned by a finite-dimensional set $\mathcal{A}=\{A_1,\ldots,A_K\}$, $K\in\mathbb{N}$, and we write $\Astar = \sum_{j=1}^{K}\al_{\star,j}A_j=:A(\alstar)$.
We assume that $f:\spantxt(\mathcal{A})\times\mbbr^N\times \mbbr^M\rightarrow\mbbr^N, (A,\ypmb,\eps)\mapsto f(A,\ypmb,\eps)$ is differentiable in $A$ and $\ypmb$.

To identify the unknown operator $\Astar$ one uses a set of control functions $(\eps^m)_{m=1}^{K}\subset E_{ad}$ to perform $K$ laboratory experiments and obtain the experimental data
\begin{equation}\label{eq:laboratorydata}
	\vphipmb^\star_{data}(\eps^m):=C\ypmb(\Astar,\eps^m;T),\textnormal{ for }m=1,\ldots,K.
\end{equation}
Here, $\ypmb(\Astar,\eps;T)$ denotes the solution to \eqref{eq:ODE} at time $T>0$, corresponding to the operator $\Astar$ and a control function $\eps$. The matrix $C\in\mathbb{R}^{P\times N}$ ($P\leq N$) is a given observer matrix. The measurements are assumed not to be affected by noise.

Using the set $(\eps^m)_{m=1}^{K}$ and the data $(\vphipmb^\star_{data}(\eps^m))_{m=1}^{K}\subset\mbbr^P$, the unknown vector $\al$ is obtained by solving the least-squares problem
\begin{equation}\label{eq:identificationproblem}
	\min_{\al\in\mbbr^K}\frac{1}{2}\sum_{m=1}^{K}\norm{\vphipmb^\star_{data}(\eps^m)-C\ypmb(A(\al),\eps^m;T)}_2^2.
\end{equation}

GN is a typical iterative strategy to solve problems of the form \eqref{eq:identificationproblem}, and its
process is initialized by a vector which we will call $\al_{\circ}\in\mbbr^K$. 
We denote by $\al_c\in\mbbr^K$ the GN iterate, and define $f_m(\al):=\frac{1}{2}\sum_{i=1}^{P}\norm{(R_m(\al))_i}_2^2=\frac{1}{2}R_m(\al)^\top R_m(\al)$, where
\begin{align}
R_m(\al)&:=C\ypmb(A(\al),\eps^m;T)-\vphipmb^\star_{data}(\eps^m),\label{eq:rm}
\end{align}
for $m\in\{1,\ldots,K\}$. Thus, the identification problem \eqref{eq:identificationproblem} is equivalent to
\begin{equation}\label{eq:GaussNewtonIdentProblem}
\min_{\al\in\mbbr^K}\sum_{m=1}^{K}f_m(\al).
\end{equation}
Given an iterate $\al_c$, GN computes the new iterate by solving a problem of the form
\begin{equation}\label{eq:GNStepProblemR}
\min_{\al\in\mbbr^K} \sum_{m=1}^{K}\|R_m'(\al_c)(\al-\al_c) - R_m(\al_c)\|_2^2,
\end{equation}
where $R_m'(\al_c)\in\mbbr^{P\times K}$ denotes the Jacobian of $R_m$ at $\al_c\in\mbbr^K$.
The first-order optimality condition of \eqref{eq:GNStepProblemR} is
\begin{equation}\label{eq:GNStepOptimality}
\sum_{m=1}^{K}\Big(R_m'(\al_c)^\top R_m'(\al_c)\Big)\al=\sum_{m=1}^{K}R_m'(\al_c)^\top R_m(\al_c),
\end{equation}
where $\sum_{m=1}^{K}R_m'(\al_c)^\top R_m'(\al_c)=:\widehat{W}_c\in\mbbr^{K\times K}$ is symmetric PSD.
Now, we recall the following convergence result from \cite[Theorem 2.4.1]{kelley}.
\begin{lemma}[local convergence of GN]\label{lem:gaussnewton}
	Consider a problem of the form \eqref{eq:GaussNewtonIdentProblem}. Let $\alstar$ be a minimizer of \eqref{eq:GaussNewtonIdentProblem} such that for all $m\in\{1,\ldots,K\}$ the function $R_m$ is Lipschitz continuously differentiable near $\alstar$ and $R_m(\alstar)=0$.
	If the initialization vector $\alcirc\in\mbbr^K$ is sufficiently close to $\alstar$, and $\widehat{W}_c$ is PD for all iterates $\al_c\in\mbbr^K$, then GN converges quadratically to $\alstar$.
\end{lemma}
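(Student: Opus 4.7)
The plan is to follow the standard zero-residual argument for Gauss-Newton, as in Kelley's book. First I would rewrite the optimality condition \eqref{eq:GNStepOptimality} as an explicit update. Setting
\begin{equation*}
F(\al):=\sum_{m=1}^{K}R_m'(\al)^\top R_m(\al),\qquad W(\al):=\sum_{m=1}^{K}R_m'(\al)^\top R_m'(\al),
\end{equation*}
the new iterate is $\al_+=\al_c-W(\al_c)^{-1}F(\al_c)$, which is well defined because $W(\al_c)=\widehat W_c$ is assumed PD. Subtracting $\alstar$ and using $F(\alstar)=\sum_m R_m'(\alstar)^\top R_m(\alstar)=0$ (since $R_m(\alstar)=0$), I obtain
\begin{equation*}
\al_+-\alstar = W(\al_c)^{-1}\sum_{m=1}^{K}R_m'(\al_c)^\top\bigl[R_m'(\al_c)(\al_c-\alstar)-R_m(\al_c)\bigr].
\end{equation*}

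The second step is a standard Taylor/fundamental-theorem estimate: because $R_m(\alstar)=0$ and $R_m'$ is Lipschitz near $\alstar$ with some constant $L_m$,
\begin{equation*}
R_m(\al_c)=\int_0^1 R_m'\bigl(\alstar+s(\al_c-\alstar)\bigr)(\al_c-\alstar)\,ds,
\end{equation*}
and consequently
\begin{equation*}
\bigl\|R_m'(\al_c)(\al_c-\alstar)-R_m(\al_c)\bigr\|_2\le \tfrac{L_m}{2}\,\norm{\al_c-\alstar}_2^2.
\end{equation*}
Substituting this bound into the previous identity yields
\begin{equation*}
\norm{\al_+-\alstar}_2\le \norm{W(\al_c)^{-1}}_2\Bigl(\sum_{m=1}^{K}\norm{R_m'(\al_c)}_2\tfrac{L_m}{2}\Bigr)\norm{\al_c-\alstar}_2^{\,2}.
\end{equation*}

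The third step is to uniformize the constants on a sufficiently small neighborhood $\mathcal{B}^K_\rho(\alstar)$. By continuity of $R_m'$, the factor $\sum_m\|R_m'(\al_c)\|_2$ is bounded on such a neighborhood. For $\|W(\al_c)^{-1}\|_2$ I would invoke the PD hypothesis: since $W$ is continuous in $\al$ and PD along the iterates, its smallest eigenvalue has a positive lower bound on a compact ball around $\alstar$, giving a uniform bound $\|W(\al_c)^{-1}\|_2\le M_W$. Combining these gives a quadratic recursion $\norm{\al_+-\alstar}_2\le C\,\norm{\al_c-\alstar}_2^{\,2}$ for some constant $C$.

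Finally I would close the induction: choose $\rho>0$ so small that $C\rho<1$ and that all of the above bounds hold on $\mathcal{B}^K_\rho(\alstar)$; then if $\alcirc\in\mathcal{B}^K_\rho(\alstar)$, the recursion both keeps the iterates in the ball and forces quadratic convergence. The main subtlety I expect is the uniform invertibility of $W(\al_c)$: the lemma only assumes $\widehat W_c$ is PD along the orbit, so one must use continuity of the spectrum together with the initial closeness to $\alstar$ to argue that the smallest eigenvalue does not degenerate. Everything else is a routine bookkeeping of Lipschitz constants.
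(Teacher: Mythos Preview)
The paper does not give its own proof of this lemma; it merely cites \cite[Theorem~2.4.1]{kelley}. Your argument is exactly the standard zero-residual Gauss--Newton proof one finds there (stack the $R_m$ into a single residual and your computation is verbatim Kelley's), so there is nothing to compare: your proposal is correct and coincides with the cited source.
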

Lemma \ref{lem:gaussnewton} implies that, given an initialization vector $\alcirc$ sufficiently close to the solution $\alstar$, the functions $(\eps^m)_{m=1}^{K}$ should be chosen such that the GN matrix $\widehat{W}_c=\sum_{m=1}^{K}R_{\eps^m}'(\al_c)^\top R_{\eps^m}'(\al_c)$ is PD for all $\al_c\in\mbbr^K$ in a neighborhood of $\alstar$.
Notice that $\widehat{W}_c$ being PD is equivalent to \eqref{eq:GNStepProblemR}-\eqref{eq:GNStepOptimality} being uniquely solvable.
Using \eqref{eq:rm}, we can write \eqref{eq:GNStepProblemR} more explicitly. For a direction $\delta\al\in\mbbr^K$, we have
\begin{equation}\label{eq:RmPrimeC}
R_m'(\al_c)(\delta\al)=C\delta\ypmb_c(A(\delta\al),\eps^m;T),
\end{equation}
where $\delta\ypmb_c(A(\delta\al),\eps^m;T)$ denotes the solution at time $T$ to the linearized equation
\begin{equation}\label{eq:ODElinearized}
\resizebox{.9\hsize}{!}{$\begin{cases}
	\dot{\delta\ypmb}_c=\partial_{\ypmb}f(A(\al_c),\ypmb_c,\eps)\delta\ypmb_c+\sum_{j=1}^{K}\delta\al_j\Big(\partial_{A} f(A(\al_c),\ypmb_c,\eps)(A_j)\Big),\hspace*{0.1cm} \delta\ypmb_c(0)=0,\\
	\dot{\ypmb}_c=f(A(\al_c),\ypmb_c,\eps),\quad\ypmb_c(0)=\ypmb^0,
	\end{cases}$}
\end{equation}
at $\eps=\eps^m$.
Hence, problem \eqref{eq:GNStepProblemR} can be written as
\begin{equation}\label{eq:GNStepProblem}
\min_{\al\in\mbbr^K} \sum_{m=1}^{K}\|C\delta\ypmb_c(A(\al-\al_c),\eps^m;T) - R_{m}(\al_c)\|_2^2.
\end{equation}
Notice that the vectors $R_m(\al_c)\in\mbbr^P$ are independent of $\al$ and can therefore be considered as fixed data when solving \eqref{eq:GNStepProblem}.
Now, we recall that the GR algorithm, introduced in \cite{madaysalomon} and further analyzed in \cite{siampaper}, was designed specifically to generate control functions $(\eps^m)_{m=1}^{K}$ that make problems of the form \eqref{eq:GNStepProblem} uniquely solvable.

\section{A linearized GR algorithm (LGR)}\label{sec:GRalgorithm}
Let us assume to be provided with \comment{a vector $\alcirc$ sufficiently close to $\alstar$ (recall from section~\ref{sec:GeneralSetting} that $A_\star = A(\al_\star)$).} Further, let $\delta\ypmb_\circ(A(\al-\alcirc),\eps^m;T)$ denote the solution at time $T$ to
\begin{equation}\label{eq:ODElinearizedCirc}
\resizebox{.9\hsize}{!}{$\begin{cases}
	\dot{\delta\ypmb}_\circ=\partial_{\ypmb}f(A(\alcirc),\ypmb_\circ,\eps)\delta\ypmb_\circ+\sum_{j=1}^{K}(\al_j-\al_{\circ,j})\Big(\partial_{A} f(A(\alcirc),\ypmb_\circ,\eps)(A_j)\Big),\hspace*{0.1cm} \delta\ypmb_\circ(0)=0,\\
	\dot{\ypmb}_\circ=f(A(\alcirc),\ypmb_\circ,\eps),\quad\ypmb_\circ(0)=\ypmb^0.
	\end{cases}$}
\end{equation}
The goal is to generate control functions $(\eps^m)_{m=1}^{K}$ such that
\eqref{eq:GNStepProblem} in $\alcirc$, that is
 \begin{equation}\label{eq:GNStepProblemCirc}
 \min_{\al\in\mbbr^K} \sum_{m=1}^{K}\|C\delta\ypmb_\circ(A(\al-\alcirc),\eps^m;T) - R_m(\alcirc)\|_2^2,
 \end{equation}
 is uniquely solvable.
Then, in Section \ref{subsec:PosDefGaussNewtonLinear}, we show that if \eqref{eq:GNStepProblemCirc} is uniquely solvable, the same holds for \eqref{eq:GNStepProblem} at all \comment{$\al$ in a neighborhood of $\alcirc$.
Thus, if $\alcirc$ is an initialization vector for GN, then \eqref{eq:GNStepProblem} is uniquely solvable for all iterates $\al_c$ of GN.}

\comment{
The set $(\eps^m)_{m=1}^{K}$ is computed by  Algorithm \ref{algo:General}. 
This is the original GR algorithm from \cite{madaysalomon} applied to \eqref{eq:GNStepProblemCirc}, which involves the linearized equations \eqref{eq:ODElinearizedCirc}. 
}
\begin{algorithm}[t]
	\caption{Linearized Greedy Reconstruction Algorithm (LGR)}
	\begin{algorithmic}[1]\label{algo:General} 
	\begin{small}
		\REQUIRE A set of linearly independent operators $\mathcal{A}=\{A_1,\ldots,A_{K}\}$. 
		Recall that $\delta\ypmb_\circ(A,\eps;T)$ solves~\eqref{eq:ODElinearizedCirc}.
		\STATE Compute the control $\eps^1$ by solving \begin{equation}\label{eq:InitializationGeneral}
		\max_{\eps\in E_{ad}} \norm{C\delta\ypmb_\circ(A_1,\eps;T)}_2^2.
		\end{equation}
		\FOR{ $k=1,\dots, K-1$ }
		\STATE \underline{Fitting step}: Let $A^{(k)}(\bet):=\sum_{j=1}^{k}\bet_jA_j$, find $\bet^k=(\bet^{k}_j)_{j=1,\dots,k}$ that solves \begin{equation}\label{eq:FittingStepGeneral}
		\min_{\bet\in\mbbr^{k}}\sum_{m=1}^{k}\norm{C\delta\ypmb_\circ(A^{(k)}(\bet),\eps^m;T)-C\delta\ypmb_\circ(A_{k+1},\eps^m;T)}_2^2.
		\end{equation}
		\STATE \underline{Splitting step}: Find $\eps^{k+1}$ that solves \begin{equation}\label{eq:SplittingStepGeneral}
		\max_{\eps\in E_{ad}}\norm{C\delta\ypmb_\circ(A^{(k)}(\bet^{k}),\eps;T)-C\delta\ypmb_\circ(A_{k+1},\eps;T)}_2^2.
		\end{equation}
		\ENDFOR
	\end{small}
	\end{algorithmic}
\end{algorithm}
\setlength{\textfloatsep}{8pt}
\comment{For this reason, we call it LGR, namely Linearized Greedy Reconstruction algorithm.
As in the case of the original GR, the heuristics of LGR is that the set $(\eps^m)_{m=1}^{K}$ must allow distinguishing the states of the system \eqref{eq:ODElinearizedCirc} corresponding to any two matrices $A(\widehat{\al})$ and $A(\widetilde{\al})$, for $\widehat{\al} \neq \widetilde{\al}$.
Suppose that the first $k$ controls $(\eps^m)_{m=1}^{k}$ are already computed, the new control $\eps^{k+1}$ is obtained by solving first the fitting step problem \eqref{eq:FittingStepGeneral}, identifying two states that cannot be distinguished by the controls $(\eps^m)_{m=1}^{k}$, and then the splitting step problem \eqref{eq:SplittingStepGeneral}, computing $\eps^{k+1}$ with the goal of distinguishing these two states.
For more details see \cite{madaysalomon,siampaper}.}

Our goal is to prove that the set $(\eps^m)_{m=1}^{K}$ makes $\Wcirc:=\sum_{m=1}^{K}R_m'(\alcirc)^\top R_m'(\alcirc)$ PD, and thus \eqref{eq:GNStepProblemCirc} uniquely solvable.
From \eqref{eq:ODElinearizedCirc}, we have that $\delta\ypmb_\circ$ is linear in $\al$. Thus, $R_m'(\alcirc)(\delta\al)=\delta\ypmb_\circ(A(\delta\al),\eps^m;T)=\sum_{j=1}^{K}\delta\al_j C\delta\ypmb_\circ(A_j,\eps^m;T)$.
Hence, $R_m'(\alcirc)$ is a matrix with columns $R_m'(\alcirc)_j=C\delta\ypmb_\circ(A_j,\eps^m;T)$ for $j=1,\ldots,K$, and hence
\begin{equation}\label{eq:Wwidehatcirc_ij}
[\widehat{W}_\circ]_{i,j}=\sum_{m=1}^{K}\langle C\delta\ypmb_\circ(A_i,\eps^m;T),C\delta\ypmb_\circ(A_j,\eps^m;T)\rangle,\quad i,j\in\{1,\ldots,K\}.
\end{equation}
Using \eqref{eq:Wwidehatcirc_ij}, we can rewrite \eqref{eq:InitializationGeneral}, \eqref{eq:FittingStepGeneral} and \eqref{eq:SplittingStepGeneral} in a matrix form.
\begin{lemma}[Algorithm \ref{algo:General} in matrix form]\label{lem:ALGinWGeneral}
	Consider Algorithm \ref{algo:General}. Then:
	\begin{itemize}\itemsep0em
		\item The initialization problem \eqref{eq:InitializationGeneral} is equivalent to
		\begin{equation}\label{eq:InitWGeneral}
		\max_{\eps\in E_{ad}}\; [W_\circ(\eps)]_{1,1},
		\end{equation}
		where $[W_\circ(\eps)]_{i,j}:=\langle C\delta\ypmb_\circ(A_i,\eps;T),C\delta\ypmb_\circ(A_j,\eps;T)\rangle$ for $i,j\in\{1,\ldots, K\}$.
		
		\item Let $\widehat{W}_\circ^{(k)}:=\sum_{m=1}^{k}W_\circ(\eps^m)$, the fitting-step problem \eqref{eq:FittingStepGeneral} is equivalent to
		\begin{equation}\label{eq:FittingWGeneral}
		\min_{\bet\in\mathbb{R}^k}\; \langle\bet,[\widehat{W}_\circ^{(k)}]_{[1:k,1:k]}\bet\rangle-2\langle[\widehat{W}_\circ^{(k)}]_{[1:k,k+1]},\bet\rangle.
		\end{equation}
		
		\item Let $\vpmb:=[(\bet^k)^\top,\;-1]^\top$, the splitting-step problem \eqref{eq:SplittingStepGeneral} is equivalent to
		\begin{equation}\label{eq:SplittingWGeneral}
		\max_{\eps\in E_{ad}}\; \langle\vpmb,[W_\circ(\eps)]_{[1:k+1,1:k+1]}\vpmb\rangle.
		\end{equation}
	\end{itemize}
	Moreover, problems \eqref{eq:InitializationGeneral}-\eqref{eq:InitWGeneral},
	\eqref{eq:FittingStepGeneral}-\eqref{eq:FittingWGeneral}, 
	and \eqref{eq:SplittingStepGeneral}-\eqref{eq:SplittingWGeneral}
	are well posed.
\end{lemma}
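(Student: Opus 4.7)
The plan has two essentially independent parts: establish the three equivalences, then argue well-posedness.

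For the equivalences, the key observation I would exploit is that $\delta\ypmb_\circ(A,\eps;T)$ depends linearly on $A$. Indeed, with $\alcirc$ fixed in \eqref{eq:ODElinearizedCirc}, the forcing term $\sum_j \gamma_j \bigl(\partial_A f(A(\alcirc),\ypmb_\circ,\eps)(A_j)\bigr)$ is linear in $\gamma \in \mbbr^K$, while the homogeneous part of the linearized ODE is independent of $\gamma$. By linearity of the associated variation-of-constants formula, this yields $C\delta\ypmb_\circ\bigl(\sum_j \gamma_j A_j,\eps;T\bigr) = \sum_j \gamma_j\, C\delta\ypmb_\circ(A_j,\eps;T)$. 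The initialization equivalence \eqref{eq:InitializationGeneral}$\Leftrightarrow$\eqref{eq:InitWGeneral} is then immediate from the definition of $[W_\circ(\eps)]_{1,1}$.

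For the fitting step, writing $\vpmb(\bet):=[\bet^\top,-1]^\top \in \mbbr^{k+1}$ and using linearity,
\begin{equation*}
\|C\delta\ypmb_\circ(A^{(k)}(\bet),\eps^m;T) - C\delta\ypmb_\circ(A_{k+1},\eps^m;T)\|_2^2 = \bigl\langle \vpmb(\bet),\, [W_\circ(\eps^m)]_{[1:k+1,1:k+1]}\,\vpmb(\bet)\bigr\rangle.
\end{equation*}
Summing over $m=1,\dots,k$ and expanding the quadratic form on $\mbbr^{k+1}$ in the $\bet$-block, the cross-block, and the $(k+1,k+1)$-entry, the latter becomes a $\bet$-independent constant that can be dropped. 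This yields \eqref{eq:FittingWGeneral}. The splitting step is the same calculation with $\vpmb=[(\bet^k)^\top,-1]^\top$ now fixed and $\eps$ varying, which immediately gives \eqref{eq:SplittingWGeneral}.

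For well-posedness, the fitting step \eqref{eq:FittingWGeneral} minimizes a convex quadratic in $\bet \in \mbbr^k$ with PSD matrix $[\widehat{W}_\circ^{(k)}]_{[1:k,1:k]}$ (it is a Gram matrix). Since \eqref{eq:FittingStepGeneral} is an unconstrained least-squares problem over $\mbbr^k$, it always admits a minimizer, namely any $\bet$ whose associated $\sum_j \bet_j C\delta\ypmb_\circ(A_j,\eps^m;T)$ equals the orthogonal projection of $C\delta\ypmb_\circ(A_{k+1},\eps^m;T)$ onto $\spantxt\{C\delta\ypmb_\circ(A_j,\eps^m;T)\}_{j=1}^{k}$, so existence is guaranteed. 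For the initialization \eqref{eq:InitializationGeneral} and splitting \eqref{eq:SplittingStepGeneral} problems, the admissible set $E_{ad}\subset\Ltwo$ is weakly compact by assumption, so existence of a maximizer follows once the objective functional is shown to be weakly upper semi-continuous on $E_{ad}$. This reduces to continuity of the map $\eps \mapsto C\delta\ypmb_\circ(A,\eps;T)$ from $\Ltwo$ (endowed with the weak topology) into $\mbbr^P$, since the composition with the norm-squared preserves upper semi-continuity. The continuity itself follows from standard stability estimates for the coupled system \eqref{eq:ODElinearizedCirc}: a bounded sequence of controls produces bounded trajectories $\ypmb_\circ$ by the differentiability hypothesis on $f$, and then the linear structure in $\delta\ypmb_\circ$ combined with compactness of the state-to-output map at a fixed time $T$ yields the required continuity.

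The main obstacle is the weak continuity argument for the max problems, since $\delta\ypmb_\circ$ depends on $\eps$ both explicitly through the inhomogeneity and implicitly through the trajectory $\ypmb_\circ$ governing the linearization. Everything else reduces to expanding a norm squared and invoking standard existence results for least-squares minimization.
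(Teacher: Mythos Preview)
Your proof is correct and follows essentially the same approach as the paper: both exploit the linearity of $\delta\ypmb_\circ$ in $A$ to rewrite the squared norm as a quadratic form $\langle\vpmb,[W_\circ(\eps)]_{[1:k+1,1:k+1]}\vpmb\rangle$, then expand with $\vpmb=[\bet^\top,-1]^\top$ to recover the three equivalences. For well-posedness the paper simply cites standard arguments from \cite[Lemma~5.2]{siampaper}, whereas you spell out the least-squares existence for the fitting step and the weak-compactness/weak-upper-semicontinuity argument for the max problems; this is the same underlying mechanism, and your identification of the weak-continuity of $\eps\mapsto C\delta\ypmb_\circ(A,\eps;T)$ as the only nontrivial ingredient is accurate.
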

\begin{proof}
    The proof is similar to the one of \cite[Lemma 5.12]{siampaper}.
	For an arbitrary $k\in\{0,\ldots,K-1\}$ let $\vpmb\in\mbbr^{k+1}$ and $A(\vpmb)=\sum_{j=1}^{k+1}\vpmb_jA_j$.
	We have $\|C\delta\ypmb_\circ(A(\vpmb),\eps;T) \|_2^2=\langle\vpmb,[W_\circ(\eps)]_{[1:k+1,1:k+1]}\vpmb\rangle$.
	Recalling that $\delta\ypmb_\circ(A(\vpmb),\eps;T)=\sum_{j=1}^{k+1}\vpmb_j\delta\ypmb_\circ(A_j,\eps;T)$, we obtain the equivalence between \eqref{eq:InitWGeneral}, \eqref{eq:SplittingWGeneral}, and \eqref{eq:InitializationGeneral}, \eqref{eq:SplittingStepGeneral} for suitable $k$ and $\vpmb$.
	For the equivalence between \eqref{eq:FittingWGeneral} and \eqref{eq:FittingStepGeneral}, notice that for $\vpmb=[\bet^\top,-1]^\top\in\mbbr^{k+1}$ and any $W\in\mbbr^{(k+1)\times(k+1)}$ we have $\langle\vpmb,W\vpmb\rangle=\langle\bet,[W]_{[1:k,1:k]}\bet\rangle-2\langle[W]_{[1:k,k+1]},\bet\rangle+[W]_{k+1,k+1}$.
	The well-posedness of the three problems follows by standard arguments; see, e.g., \cite[Lemma 5.2]{siampaper}.
\end{proof}
The matrix representation given in Lemma \ref{lem:ALGinWGeneral} allows us to nicely describe the mathematical mechanism behind Algorithm \ref{algo:General} (see also \cite[section 5.1]{siampaper}).
Assume that at the $k$-th iteration the set $(\eps_m)_{m=1}^k$ has been computed, the submatrix $[\widehat{W}_\circ^{(k)}]_{[1:k,1:k]}$ is PD and $[\widehat{W}_\circ^{(k)}]_{[1:k+1,1:k+1]}$ has a nontrivial (one-dimensional) kernel.
Then the fitting step of Algorithm \ref{algo:General} identifies this nontrivial kernel. This can be proved by the following technical lemma (for a proof see \cite[Lemma 5.3]{siampaper}).
\begin{lemma}[kernel of some symmetric PSD matrices]\label{lem: if A pos def then also next bigger A}
	Consider a symmetric PSD matrix $\widetilde{G} = \small{\begin{bmatrix}
	G&\pmb b\\
	\pmb b^\top&c
	\end{bmatrix}}\in\mathbb{R}^{n\times n}$,
	where $G\in\mathbb{R}^{(n-1)\times(n-1)}$ is symmetric PD, 
	and $\pmb b\in\mathbb{R}^{n-1}$ and $c\in\mathbb{R}$ are such that ${\rm ker}( \widetilde{G})$ is nontrivial.	
	Then ${\rm ker}(\widetilde{G})={\rm span}\bigg\{\small{\begin{bmatrix}
	G^{-1}\pmb{b}\\-1
	\end{bmatrix}} \bigg\}$.
\end{lemma}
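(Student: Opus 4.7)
The plan is to combine an explicit verification that the proposed generator lies in $\ker(\widetilde G)$ with a rank/dimension argument showing the kernel cannot be bigger. The algebraic engine behind both steps is the Schur complement of $G$ in $\widetilde G$, which is well defined because $G$ is invertible.

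First, I would compute directly
\begin{equation*}
\widetilde G \begin{bmatrix} G^{-1}\pmb b \\ -1 \end{bmatrix}
= \begin{bmatrix} G\,G^{-1}\pmb b - \pmb b \\ \pmb b^{\!\top} G^{-1}\pmb b - c \end{bmatrix}
= \begin{bmatrix} 0 \\ \pmb b^{\!\top} G^{-1}\pmb b - c \end{bmatrix}.
\end{equation*}
So the candidate vector lies in $\ker(\widetilde G)$ if and only if $c=\pmb b^{\!\top}G^{-1}\pmb b$, i.e.\ if and only if the Schur complement $s:=c-\pmb b^{\!\top}G^{-1}\pmb b$ vanishes. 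To obtain this identity, I would exploit the block factorization
\begin{equation*}
\widetilde G =
\begin{bmatrix} I & 0 \\ \pmb b^{\!\top}G^{-1} & 1 \end{bmatrix}
\begin{bmatrix} G & 0 \\ 0 & s \end{bmatrix}
\begin{bmatrix} I & G^{-1}\pmb b \\ 0 & 1 \end{bmatrix},
\end{equation*}
which is a congruence since the outer factors are transposes of each other. Hence $\widetilde G$ is PSD if and only if $\mathrm{diag}(G,s)$ is PSD; combined with $G$ being PD this yields $s\ge 0$. Similarly, since the two triangular factors are invertible, $\ker(\widetilde G)$ is nontrivial iff $\mathrm{diag}(G,s)$ is singular iff $s=0$. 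Thus the hypothesis forces $c=\pmb b^{\!\top}G^{-1}\pmb b$, and the vector $[G^{-1}\pmb b,\,-1]^{\!\top}$ indeed belongs to $\ker(\widetilde G)$.

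Second, I would establish $\dim \ker(\widetilde G)=1$. Since $G$ is an $(n{-}1)\times(n{-}1)$ principal submatrix of rank $n-1$, one has $\mathrm{rank}(\widetilde G)\ge n-1$, so $\dim\ker(\widetilde G)\le 1$; together with the nontriviality assumption this gives equality. The one-dimensional kernel is therefore spanned by any nonzero element it contains, which by the computation above is $[G^{-1}\pmb b,\,-1]^{\!\top}$.

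The only step that needs any care is the passage from ``$\widetilde G$ is PSD and singular'' to ``$s=0$''; everything else is matrix bookkeeping. The congruence decomposition above handles this cleanly, so I do not anticipate a real obstacle. An alternative, essentially equivalent route would be to observe that $\det(\widetilde G)=\det(G)\cdot s$, so singularity of $\widetilde G$ together with invertibility of $G$ forces $s=0$; I would likely mention this as a one-line shortcut, but keep the congruence argument because it simultaneously delivers the PSD bookkeeping ($s\ge 0$) should it be needed elsewhere.
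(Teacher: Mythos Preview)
Your argument is correct: the Schur-complement congruence cleanly forces $s=c-\pmb b^{\top}G^{-1}\pmb b=0$ from the singularity hypothesis, the direct computation then places $[G^{-1}\pmb b,\,-1]^{\top}$ in the kernel, and the rank bound $\mathrm{rank}(\widetilde G)\ge n-1$ pins $\dim\ker(\widetilde G)=1$. Note that the paper does not actually prove this lemma but defers to \cite[Lemma 5.3]{siampaper}; your self-contained proof via the block $LDL^{\top}$ factorization is a standard and complete route, and the determinant shortcut you mention ($\det\widetilde G=\det(G)\,s$) would equally suffice for the key step.
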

In our case, we have $\widetilde{G}=[\widehat{W}_\circ^{(k)}]_{[1:k+1,1:k+1]}$, $G=[\widehat{W}_\circ^{(k)}]_{[1:k,1:k]}$ and $\pmb b=[\widehat{W}_\circ^{(k)}]_{[1:k,k+1]}$.
In this notation, the solution to \eqref{eq:FittingWGeneral} is given by $\bet^k=G^{-1}\pmb{b}$.
Thus, Lemma \ref{lem: if A pos def then also next bigger A} implies that the kernel of $[\widehat{W}_\circ^{(k)}]_{[1:k+1,1:k+1]}$ is spanned by $\vpmb:=[(\bet^k)^\top,\;-1]^\top$.
Now, the splitting step attempts to compute a new control $\eps^{k+1}$ such that $[\widehat{W}_\circ(\eps^{k+1})]_{[1:k+1,1:k+1]}$ is PD on the span of $\vpmb$.
If this is successful, then $[\widehat{W}_\circ^{(k+1)}]_{[1:k+1,1:k+1]}$ is PD.
The equivalence of \eqref{eq:SplittingStepGeneral} and \eqref{eq:SplittingWGeneral} implies that $[\widehat{W}_\circ(\eps^{k+1})]_{[1:k+1,1:k+1]}$ is PD on the span of $\vpmb$ if and only if $\eps^{k+1}$ satisfies $\|C\delta\ypmb_\circ(A^{(k)}(\bet^{k}),\eps^{k+1};T)-C\delta\ypmb_\circ(A_{k+1},\eps^{k+1};T)\|_2^2>0$.
The existence of such a control depends on the controllability and observability properties of system \eqref{eq:ODElinearized}, as shown in sections \ref{sec:LinearReconstruction} and \ref{sec:BilinearReconstruction}.
We conclude this section with a remark that is useful hereafter.
\begin{remark}
	The GN matrix $\widehat{W}_\star:=\sum_{m=1}^{K}R_m'(\alstar)^\top R_m'(\alstar)\in\mbbr^{K\times K}$ can be written as $[\widehat{W}_\star]_{i,j}=\sum_{m=1}^{K}\langle C\delta\ypmb_\star(A_i,\eps^m;T),C\delta\ypmb_\star(A_j,\eps^m;T)\rangle$ for $i,j\in\{1,\ldots,K\}$, where $\delta\ypmb_\star(A_i,\eps;T)$ denotes the solution at time $T$ of
	\begin{equation*}
	\resizebox{.9\hsize}{!}{$\begin{cases}
		\dot{\delta\ypmb}_\star=\partial_{\ypmb}f(A(\alstar),\ypmb_\star,\eps)\delta\ypmb_\star+\Big(\partial_{A} f(A(\alstar),\ypmb_\star,\eps)(A_i)\Big),\quad \delta\ypmb_\star(0)=0,\\
		\dot{\ypmb}_\star=f(A(\alstar),\ypmb_\star,\eps),\quad\ypmb(0)=\ypmb^0.
		\end{cases}$}
	\end{equation*}
\end{remark}

\section{Reconstruction of drift matrix in linear systems}\label{sec:LinearReconstruction}
Consider \eqref{eq:ODE} with $f(A,\ypmb,\eps):=A\ypmb+B\eps$, where $A$ and $B$ are real matrices:
\begin{equation}\label{eq:ODElinear}
\dot{\ypmb}(t)=\Astar\ypmb(t)+B\eps(t),\; t \in(0,T], \quad\ypmb(0)=0.
\end{equation}
This is a linear system, where $B\in\mathbb{R}^{N\times M}$ is a given matrix for $N,M\in\mbbn^+$, and $\eps\in E_{ad} \subset \Ltwo$. 
The drift matrix $\Astar\in\mathbb{R}^{N\times N}$ is unknown and assumed to lie in the space spanned by a set of linearly independent matrices $\mathcal{A}=\{A_1,\ldots,A_K\}\subset\mathbb{R}^{N\times N}$, $1\leq K\leq N^2$. We write $\Astar = \sum_{j=1}^{K}\al_{\star,j}A_j=:A(\alstar)$.
As stated in section \ref{sec:GeneralSetting}, we want to identify the unknown drift matrix $\Astar$ by using a set of control functions $(\eps^m)_{m=1}^{K}\subset E_{ad}$ in order to perform $K$ laboratory experiments and obtain the experimental data $(\vphipmb^\star_{data}(\eps^m))_{m=1}^{K}\subset\mbbr^P$, as defined in \eqref{eq:laboratorydata}.
\begin{remark}
    The hypothesis $\ypmb(0)=0$ in \eqref{eq:ODElinear} can be made without loss of generality.
    Indeed, if $\ypmb(0)=\ypmb^0 \neq 0$, one can use $\eps=0$ (case of uncontrolled system), generate the data $\vphipmb^\star_{data}(0)$, and then subtract this from all other data $(\vphipmb^\star_{data}(\eps^m))_{m=1}^{K}$ to get back (by linearity) to the case of system \eqref{eq:ODElinear} with $\ypmb(0)=0$.
\end{remark}
Using $(\eps^m)_{m=1}^{K}$ and $(\vphipmb^\star_{data}(\eps^m))_{m=1}^{K}$, the unknown vector $\alstar$ is obtained by solving \eqref{eq:identificationproblem}, in which $\ypmb(A(\al),\eps^m;T)$ now solves \eqref{eq:ODElinear}, with $A_\star$ replaced by $A(\al)$.
Thus, we use the LGR Algorithm \ref{algo:General} to generate $(\eps^m)_{m=1}^{K}$ with the goal of making \eqref{eq:GNStepProblemCirc} uniquely solvable, that means making PD
the GN matrix $\widehat{W}_\circ$, defined in \eqref{eq:Wwidehatcirc_ij}.
In \eqref{eq:GNStepProblemCirc}, $\delta\ypmb_\circ(A(\delta\al),\eps;t)$ is now the solution to
\begin{equation}\label{eq:ODElinearlinearized}
\begin{cases} 
\dot{\delta\ypmb}_\circ(t)=A(\alcirc)\delta\ypmb_\circ(t)+\sum_{j=1}^{K}\delta\al_j A_j\ypmb_\circ(t),\quad t \in(0,T],\quad \delta\ypmb_\circ(0)=0, \\
\dot{\ypmb}_\circ(t)=A(\alcirc)\ypmb_\circ(t)+B\eps(t),\quad t \in(0,T],\quad \ypmb_\circ(0)=0.
\end{cases}
\end{equation}

In what follows, we show that the LGR Algorithm \ref{algo:General} does produce $(\eps^m)_{m=1}^{K}$ that make $\widehat{W}_\circ$ PD under appropriate assumptions on observability and controllability of the considered linear system.
Let us recall these properties for an input/output system $(A,B,C)$ of the form \eqref{eq:IOSyst}
with $A\in\mathbb{R}^{N \times N}$, $B\in\mathbb{R}^{N \times M}$, $C\in\mathbb{R}^{P \times N}$; see, e.g., \cite[Theorem 3, Theorem 23]{Sontag1998}.
\begin{definitionandlemma}[observable input-output linear systems]\label{def:ObservabilityLinear}
	The linear system \eqref{eq:IOSyst} is said to be observable if the initial state $\ypmb(0)=\ypmb^0$ can be uniquely determined from input/output measurements.
	Equivalently, \eqref{eq:IOSyst} is observable if and only if the observability matrix $\mathcal{O}_N(C,A) := \begin{bmatrix}
	C& CA& \cdots&CA^{N-1}
	\end{bmatrix}^\top$ has full rank.
\end{definitionandlemma}
\begin{definitionandlemma}[controllable input-output linear systems]
	The linear system \eqref{eq:IOSyst} is said to be controllable if for any final state $\ypmb^f$ there exists an input sequence that transfers $\ypmb^0$ to $\ypmb^f$. 
	Equivalently, \eqref{eq:IOSyst} is controllable if and only if the controllability matrix $\label{eq:controllabilitymatrix}
	\mathcal{C}_N(A,B) := \begin{bmatrix}
	B& AB& \cdots&A^{N-1}B
	\end{bmatrix}$ has full rank.
\end{definitionandlemma}

Notice that the analysis that we are going to present is also valid in the case of the reconstruction of a control matrix considered in \cite[Section 5]{siampaper}, i.e. $f(A,\ypmb,\eps)=M\ypmb+A\eps$, and is therefore an extension of the results obtained in \cite{siampaper}.

\subsection{Analysis for linear systems}\label{subsec:convanalysisfullyobscont}
We define $\mathcal{O}_N^\circ:=\obsmatzero$ and $\mathcal{C}_N^\circ:=\contmatzero$
and assume that the system $(A(\alcirc),B,C)$ is observable and controllable, namely $\mathcal{R}:=\ranktxt(\obsmatzeroSHORT)\cdot\ranktxt(\contmatzeroSHORT)=N^2$.
In what follows, we show that this is a sufficient condition for $\widehat{W}_\circ$ to be PD with the controls generated by Algorithm \ref{algo:General}. First, we need the following result \cite[Ch.~3, Theorem 2.11]{linearsystems}.
\begin{lemma}[controllability of time-invariant systems]\label{lem:reachability}
	Consider the system $\dot{\xpmb}=A\xpmb+B\eps$ with $\xpmb(0)=0$ and its solution $\xpmb(\eps,t):=\int_{0}^{t}e^{(t-s)A(\alcirc)}B\eps(s)ds$. For any finite time $t_0>0$, there exists a control $\eps$ that transfers the state to $\wpmb$ in time $t_0$, i.e. $\xpmb(\eps,t_0)=\wpmb$, if and only if 
	$\wpmb\in\imtxt\Big(\mathcal{C}_N(A,B)\Big)$.
	Furthermore, an appropriate $\eps$ that will accomplish this transfer in time $t_0$ is given by $\eps(t)=B^\top e^{(t_0-t)A^\top}\nupmb$, for $t\in[0,t_0]$ and $\nupmb$ such that $\mathcal{W}_c(0,t_0)\nupmb=\wpmb$, where $\mathcal{W}_c(0,T):=\int_{0}^{T}e^{\tau A}BB^\top e^{\tau A^\top}d\tau$.
\end{lemma}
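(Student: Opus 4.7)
The plan is to prove the reachability characterization in three steps, the pivotal one being the identity $\imtxt(\mathcal{W}_c(0,t_0))=\imtxt(\mathcal{C}_N(A,B))$. For necessity, assume some $\eps$ achieves $\xpmb(\eps,t_0)=\wpmb$. By the Cayley--Hamilton theorem applied to $A$, there exist continuous scalar functions $\phi_0,\dots,\phi_{N-1}:\mbbr\to\mbbr$ such that $e^{\tau A}=\sum_{k=0}^{N-1}\phi_k(\tau)A^k$ for every $\tau\in\mbbr$. Substituting into the variation-of-constants formula and pulling the $A^k B$ blocks out of the integral gives
\begin{equation*}
\wpmb=\int_0^{t_0}e^{(t_0-s)A}B\eps(s)\,ds=\sum_{k=0}^{N-1}A^kB\pmb c_k,\quad \pmb c_k:=\int_0^{t_0}\phi_k(t_0-s)\eps(s)\,ds\in\mbbr^M,
\end{equation*}
so that $\wpmb\in\imtxt(\mathcal{C}_N(A,B))$.

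The heart of the argument is the image identity $\imtxt(\mathcal{W}_c(0,t_0))=\imtxt(\mathcal{C}_N(A,B))$. Since $\mathcal{W}_c(0,t_0)$ is symmetric PSD, it is equivalent to establish $\kertxt(\mathcal{W}_c(0,t_0))=\kertxt(\mathcal{C}_N(A,B)^\top)$ and then take orthogonal complements. If $\vpmb\in\kertxt(\mathcal{W}_c(0,t_0))$, the identity $0=\vpmb^\top\mathcal{W}_c(0,t_0)\vpmb=\int_0^{t_0}\|B^\top e^{\tau A^\top}\vpmb\|_2^2\,d\tau$ forces $B^\top e^{\tau A^\top}\vpmb\equiv 0$ on $[0,t_0]$; differentiating $k$ times at $\tau=0$ yields $B^\top(A^\top)^k\vpmb=0$ for $k=0,\dots,N-1$, i.e.\ $\vpmb\in\kertxt(\mathcal{C}_N(A,B)^\top)$. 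Conversely, $\mathcal{C}_N(A,B)^\top\vpmb=0$ combined with Cayley--Hamilton extends the vanishing to $B^\top(A^\top)^k\vpmb=0$ for every $k\geq 0$, whence $B^\top e^{\tau A^\top}\vpmb\equiv 0$ and $\mathcal{W}_c(0,t_0)\vpmb=0$.

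For sufficiency and the explicit formula, assume $\wpmb\in\imtxt(\mathcal{C}_N(A,B))=\imtxt(\mathcal{W}_c(0,t_0))$ and pick $\nupmb\in\mbbr^N$ with $\mathcal{W}_c(0,t_0)\nupmb=\wpmb$. Setting $\eps(t):=B^\top e^{(t_0-t)A^\top}\nupmb$ and using the change of variable $\tau=t_0-s$, one obtains
\begin{equation*}
\xpmb(\eps,t_0)=\int_0^{t_0}e^{(t_0-s)A}BB^\top e^{(t_0-s)A^\top}\nupmb\,ds=\int_0^{t_0}e^{\tau A}BB^\top e^{\tau A^\top}\nupmb\,d\tau=\mathcal{W}_c(0,t_0)\nupmb=\wpmb,
\end{equation*}
which simultaneously proves sufficiency and exhibits the claimed control.

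The delicate point of the proof is the image identity in the second paragraph: in both inclusions it hinges on Cayley--Hamilton (or equivalently on the real-analyticity of $\tau\mapsto B^\top e^{\tau A^\top}\vpmb$), which bridges the finite list of powers $A^0,\dots,A^{N-1}$ encoded in $\mathcal{C}_N(A,B)$ and the full matrix exponential appearing in $\mathcal{W}_c(0,t_0)$. Once this bridge is in place, the necessity step and the verification of the Gramian-based control reduce to direct substitutions.
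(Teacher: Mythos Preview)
Your argument is correct and is the standard textbook proof of this classical reachability result: Cayley--Hamilton for the necessity direction, the kernel identity $\kertxt(\mathcal{W}_c(0,t_0))=\kertxt(\mathcal{C}_N(A,B)^\top)$ to match the Gramian range with the controllability range, and direct substitution of the Gramian-based control for sufficiency. Note, however, that the paper does not actually prove this lemma; it simply quotes it from \cite[Ch.~3, Theorem~2.11]{linearsystems}, so there is no paper proof to compare against---you have supplied precisely the argument that reference contains.
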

Now, we prove the following lemma regarding the initialization problem \eqref{eq:InitializationGeneral} and the splitting step problem \eqref{eq:SplittingStepGeneral}. Notice that the proof of this result is inspired by classical Kalman controllability theory; see, e.g., \cite{coron2007control}.
\begin{lemma}[LGR initialization and splitting steps (linear systems)]\label{lem:positivitymaxproblems}
	Assume that the matrices $A(\alcirc)\in\mathbb{R}^{N \times N}$, $B\in\mathbb{R}^{N \times M}$ and $C\in\mathbb{R}^{P \times N}$ are such that\linebreak $\ranktxt(\mathcal{O}_N^\circ)=\ranktxt(\mathcal{C}_N^\circ)=N$, and let $\widetilde{A}\in\mbbr^{N\times N}\setminus\{0\}$ be arbitrary. 
	Then any solution $\widetilde{\eps}$ of the problem $\max_{\eps\in E_{ad}}\|C\delta\ypmb_\circ\klam{\widetilde{A},\eps;T}\|_2^2$ satisfies $$\|C\delta\ypmb_\circ\klam{\widetilde{A},\widetilde{\eps};T}\|_2^2>0,$$ where $\dot{\delta\ypmb}_{\circ}=A(\alcirc)\delta\ypmb_\circ+\widetilde{A}\ypmb^\circ$, with $\delta\ypmb_\circ(0)=0$, and $\dot{\ypmb}_\circ=A(\alcirc)\ypmb_\circ+B\eps$ with $\ypmb_\circ(0)=0$
\end{lemma}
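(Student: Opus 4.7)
The plan is to argue by contradiction, following the Kalman-duality strategy alluded to in the statement. Suppose the maximum value is $0$; then $C\delta\ypmb_\circ(\widetilde A,\eps;T)=0$ for every $\eps\in E_{ad}$. Because $0$ belongs to the interior of $E_{ad}$ and the map $\eps\mapsto \delta\ypmb_\circ(\widetilde A,\eps;T)$ is linear in $\eps$ (for fixed $\widetilde A$, system \eqref{eq:ODElinearlinearized} is linear in the control), a homogeneity/scaling argument propagates the vanishing from a small $L^2$-ball around the origin to all of $\Ltwo$.

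I would then reduce the problem to an integral-kernel identity. Applying variation of constants twice to $\ypmb_\circ$ and $\delta\ypmb_\circ$ and interchanging the order of integration gives
\begin{equation*}
C\delta\ypmb_\circ(\widetilde A,\eps;T) \;=\; \int_0^T K(s)\,\eps(s)\,ds,\qquad K(s) \;:=\; C\!\int_s^T e^{(T-\tau)A(\alcirc)}\,\widetilde A\, e^{(\tau-s)A(\alcirc)}\,d\tau\;B.
\end{equation*}
Since this linear functional of $\eps$ vanishes on the whole of $\Ltwo$, the kernel satisfies $K(s)=0$ for a.e.~$s\in[0,T]$, hence identically for $s\in\mathbb{R}$ by analyticity.

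The decisive step is to deduce $\widetilde A=0$ from $K\equiv 0$ together with the rank conditions $\ranktxt(\mathcal{O}_N^\circ)=\ranktxt(\mathcal{C}_N^\circ)=N$, which will contradict $\widetilde A\neq 0$. The target identity is $\mathcal{O}_N^\circ\,\widetilde A\,\mathcal{C}_N^\circ=0$: once this is established, left-multiplying by a left inverse of $\mathcal{O}_N^\circ$ (which exists because it has full column rank) and right-multiplying by a right inverse of $\mathcal{C}_N^\circ$ (full row rank) yields $\widetilde A=0$. To obtain this factorized identity, I would expand the matrix exponentials in $K$ as power series (or, equivalently, pass to the Laplace transform and obtain $C(\lambda I-A(\alcirc))^{-1}\,\widetilde A\,(\lambda I-A(\alcirc))^{-1}B\equiv 0$ as a rational function of $\lambda$), and then combine the resulting relations with the explicit reaching controls furnished by Lemma \ref{lem:reachability} in order to select trajectories that isolate each monomial $A(\alcirc)^j B$. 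The observability side then separates the observer factors $CA(\alcirc)^i$.

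The main obstacle is precisely this decoupling step. A direct Taylor expansion of $K(s)$ around $s=T$ produces only the summed identities $\sum_{i+j=n-1}CA(\alcirc)^i\,\widetilde A\,A(\alcirc)^j\,B=0$ for $n\ge 1$, and observability by itself is insufficient to separate the indices $i$ and $j$. The controllability assumption, applied through Lemma \ref{lem:reachability}, is what allows one to generate families of test trajectories singling out each $A(\alcirc)^j B$ at a distinct time; combined with observability this yields the fully separated identities $CA(\alcirc)^i\,\widetilde A\,A(\alcirc)^j\,B=0$ for $0\le i,j\le N-1$ (enough thanks to Cayley--Hamilton), completing the contradiction.
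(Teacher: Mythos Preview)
Your reduction to the kernel identity $K(s)\equiv 0$ is correct, and you correctly isolate the difficulty: Taylor expansion (equivalently, the Laplace transform) only produces the \emph{summed} relations $\sum_{i+j=n}CA(\alcirc)^i\,\widetilde A\,A(\alcirc)^jB=0$. However, your proposed remedy---invoking Lemma~\ref{lem:reachability} to ``isolate each monomial $A(\alcirc)^jB$''---cannot work. Once $K\equiv 0$ has been established, you have already used \emph{every} control in $\Ltwo$; there are no further test trajectories to feed in, so controllability yields nothing new beyond what is already encoded in $K\equiv 0$. The two-index family $CA(\alcirc)^i\,\widetilde A\,A(\alcirc)^jB$ simply cannot be recovered from the one-parameter identity $C(\lambda I-A(\alcirc))^{-1}\widetilde A(\lambda I-A(\alcirc))^{-1}B\equiv 0$.

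This is not just a missing argument; the implication you want is false. Take $N=2$, $A(\alcirc)=\bigl(\begin{smallmatrix}0&1\\0&0\end{smallmatrix}\bigr)$, $C=(1,0)$, $B=(0,1)^\top$; then $(A(\alcirc),B,C)$ is observable and controllable. For $\widetilde A=\bigl(\begin{smallmatrix}1&0\\0&-1\end{smallmatrix}\bigr)\neq 0$ one computes $\delta y_{\circ,2}=-y_{\circ,1}$ and hence $\dot{\delta y}_{\circ,1}=\delta y_{\circ,2}+y_{\circ,1}\equiv 0$, so $C\delta\ypmb_\circ(\widetilde A,\eps;T)=\delta y_{\circ,1}(T)=0$ for \emph{every} $\eps$ and every $T$. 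Equivalently $C(\lambda I-A(\alcirc))^{-1}\widetilde A(\lambda I-A(\alcirc))^{-1}B\equiv 0$ with $\widetilde A\neq 0$, so $\mathcal O_N^\circ\,\widetilde A\,\mathcal C_N^\circ=0$ does not follow.

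The paper's proof takes a genuinely different route: rather than arguing by contradiction, it \emph{constructs} a control. It first uses observability to find $t_0$ with $Ce^{(T-t_0)A(\alcirc)}\widetilde A\wpmb\neq 0$, then uses controllability (Lemma~\ref{lem:reachability}) to produce an analytic $\widehat\eps$ steering $\ypmb_\circ$ to $\wpmb$ at time $t_0$, and finally applies an analyticity argument to the map $s\mapsto Ce^{(T-s)A(\alcirc)}\widetilde A\,\ypmb_\circ(\widehat\eps;s)$, followed by a time shift of the control. This sequential use of observability and controllability avoids the algebraic decoupling altogether. That said, the counterexample above shows that the \emph{statement} as written (for a fixed $T$ and arbitrary $\widetilde A\neq 0$) already fails for this pair $(C,B)$, so the final displayed identity in the paper's argument cannot hold in general either; some additional hypothesis or a modified construction is needed.
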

\begin{proof}
    To prove the result, it is sufficient to construct an $\widetilde{\eps}\in E_{ad}$ such that $C\delta\ypmb_\circ\klam{\widetilde{A},\widetilde{\eps};T}\neq0$.
	Since $\widetilde{A}\neq0$, there exists $\wpmb\in\mbbr^N\setminus\{0\}$ such that $\widetilde{A}\wpmb\neq0$. Since $(A(\alcirc),B,C)$ is observable, there exists $\tilde{t}>0$ such that $Ce^{\tilde{t}A(\alcirc)}\widetilde{A}\wpmb\neq0$. 
	The map $f:\mbbr\rightarrow\mbbr^{P},t\mapsto Ce^{tA(\alcirc)}\widetilde{A}\wpmb$ is analytic with derivatives $f^{(i)}(t)=CA(\alcirc)^ie^{tA(\alcirc)}\widetilde{A}\wpmb$. 
	Since $\mathcal{O}_N^\circ$ has full rank and $e^{\tilde{t}A(\alcirc)}\widetilde{A}\wpmb\neq0$, there exists $i\in\{0,\ldots,N\}$ such that $f^{(i)}(\tilde{t})=CA(\alcirc)^ie^{\tilde{t}A(\alcirc)}\widetilde{A}\wpmb\neq0$. 
	Hence, $f$ is nonconstant, and there exists $t_0\in(0,T)$ with $Ce^{t_0A(\alcirc)}\widetilde{A}\wpmb\neq0$.
	
	Now, we use that $\ypmb_\circ(\eps,s):=\int_{0}^{s}e^{(s-\tau)A(\alcirc)}B\eps(\tau)d\tau$ is the solution at time $s$ of $\dot{\ypmb}_\circ=A(\alcirc)\ypmb_\circ+B\eps$, with $\ypmb_\circ(0)=0$. 
	Since $\mathcal{C}_N^\circ$ has full rank, we have $\wpmb\in\imtxt\big(\mathcal{C}_N^\circ\big)$. 
	Thus, Lemma \ref{lem:reachability} guarantees that $\widehat{\eps}(t)=B^\top e^{(t_0-t)A(\alcirc)^\top}\nupmb$, for $t\in[0,t_0]$ and some $\nupmb\in\mbbr^N$, satisfies $\ypmb_\circ(\widehat{\eps},t_0)=\wpmb$. Clearly, $\widehat{\eps}$ is analytic in $[0,t_0]$ and thereby the same holds for $\ypmb_\circ(\widehat{\eps},s)$.
	Note that, since $\eps\equiv0$ is an interior point of $E_{ad}$, there exists $\lambda>0$ such that $\lambda\widehat{\eps}\in E_{ad}$ with $Ce^{t_0A(\alcirc)}\widetilde{A}\ypmb_\circ(\lambda\widehat{\eps},t_0)=\lambda Ce^{t_0A(\alcirc)}\widetilde{A}\ypmb_\circ(\widehat{\eps},t_0)\neq0$. Hence, we can assume without loss of generality that $\widehat{\eps}\in E_{ad}$.
	
	In conclusion, we obtain that the map
	\begin{equation*}
	\gpmb:\mbbr\rightarrow\mbbr^p,s\mapsto Ce^{(T-s)A(\alcirc)}\widetilde{A}\int_{0}^{s}e^{(s-\tau)A(\alcirc)}B\widehat{\eps}(\tau)d\tau
	\end{equation*}
	is analytic in $(0,t_0)$ with $\gpmb(t_0)\neq0$. 
	Thus, $\gpmb$ is nonzero in an open subinterval of $(0,t_0)$.
	Hence, there exists $t_1\in(0,t_0)$ such that $\int_{0}^{t_1}\gpmb(s)ds\neq0$. By choosing 
	\begin{equation*}
	\widetilde{\eps}(s) :=\begin{cases}
	0,& 0\leq s< T-t_1,\\
	\widehat{\eps}(s-t_1),& T-t_1\leq s\leq T,
	\end{cases}
	\end{equation*}
	and using that $C\delta\ypmb_\circ\klam{\widetilde{A},\widetilde{\eps};T}=\int_{0}^{T}Ce^{(T-s)A(\alcirc)}\widetilde{A}\int_{0}^{s}e^{(s-\tau)A(\alcirc)}B\widetilde{\eps}(\tau)d\tau ds$, we obtain
	
	\begin{align*}
	C\delta\ypmb_\circ\klam{\widetilde{A},\widetilde{\eps};T}
	&=\int_{T-t_1}^{T}Ce^{(T-s)A(\alcirc)}\widetilde{A}\int_{T-t_1}^{s}e^{(s-\tau)A(\alcirc)}B\widetilde{\eps}(\tau-t_1)d\tau ds\\
	&=\int_{0}^{t_1}Ce^{(t_1-s)A(\alcirc)}\widetilde{A}\int_{0}^{s}e^{(s-\tau)A(\alcirc)}B\widehat{\eps}(\tau)d\tau ds=\int_{0}^{t_1}\gpmb(s)ds\neq0.
	\end{align*}
\end{proof}
Lemma \ref{lem:positivitymaxproblems} can be applied to both \eqref{eq:InitializationGeneral} and \eqref{eq:SplittingStepGeneral}, choosing $\widetilde{A}=A_1$ and $\widetilde{A}=\big(A^{(k)}(\bet^k)-A_{k+1}\big)$, respectively.
Now, we can prove our first main convergence result.
\begin{theorem}[positive definiteness of the GN matrix $\widehat{W}_\circ$ (linear systems)]\label{thm:convergence_fully_obscont}
	Assume that $A(\alcirc)\in\mathbb{R}^{N \times N}$, $B\in\mathbb{R}^{N \times M}$ and $C\in\mathbb{R}^{P \times N}$ are such that $\ranktxt(\mathcal{O}_N^\circ)=\ranktxt(\mathcal{C}_N^\circ)=N$. 
	For $K\leq N^2$, let $\mathcal{A}=\{A_1,\ldots,A_K\}\subset~\mbbr^{N\times N}$ be a set of linearly independent matrices such that $A(\alcirc)\in\spantxt(\mathcal{A})$, and let $\{\eps^1,\ldots,\eps^{K} \}\subset E_{ad}$ be generated by Algorithm \ref{algo:General}. 
	Then the GN matrix $\widehat{W}_\circ$, defined in \eqref{eq:Wwidehatcirc_ij}, is PD.
\end{theorem}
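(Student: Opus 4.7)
The plan is to proceed by induction on $k \in \{1,\dots,K\}$, proving that the leading $k \times k$ principal submatrix $[\widehat{W}_\circ^{(k)}]_{[1:k,1:k]}$ is PD after the algorithm has generated $(\eps^m)_{m=1}^{k}$. Since $\widehat{W}_\circ = \widehat{W}_\circ^{(K)}$, the case $k=K$ yields the claim. The two main tools are the matrix reformulation of LGR from Lemma \ref{lem:ALGinWGeneral}, which recasts the initialization and splitting steps as positivity statements about quadratic forms in $W_\circ(\eps)$, and Lemma \ref{lem:positivitymaxproblems}, which converts these positivity statements into a concrete reachability/observability argument using $\ranktxt(\mathcal{O}_N^\circ)=\ranktxt(\mathcal{C}_N^\circ)=N$.

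For the base case $k=1$, I would apply Lemma \ref{lem:positivitymaxproblems} with $\widetilde{A}=A_1$, which is nonzero by the linear independence of $\mathcal{A}$. Combined with \eqref{eq:InitWGeneral}, this yields $[\widehat{W}_\circ^{(1)}]_{1,1} = \|C\delta\ypmb_\circ(A_1,\eps^1;T)\|_2^2 > 0$, i.e., the $1\times 1$ principal submatrix is PD.

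For the inductive step, assume $[\widehat{W}_\circ^{(k)}]_{[1:k,1:k]}$ is PD. I consider the $(k+1)\times(k+1)$ principal submatrix $\widetilde{G}:=[\widehat{W}_\circ^{(k)}]_{[1:k+1,1:k+1]}$, which is symmetric PSD. If $\widetilde{G}$ is already PD, then since $W_\circ(\eps^{k+1})$ is PSD we immediately obtain that $[\widehat{W}_\circ^{(k+1)}]_{[1:k+1,1:k+1]} = \widetilde{G} + [W_\circ(\eps^{k+1})]_{[1:k+1,1:k+1]}$ is PD as well. Otherwise $\widetilde{G}$ has a nontrivial kernel, which by Lemma \ref{lem: if A pos def then also next bigger A} is exactly one-dimensional and spanned by $\vpmb=[(\bet^k)^\top,-1]^\top$, where $\bet^k$ solves the fitting step \eqref{eq:FittingWGeneral}. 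Setting $\widetilde{A}:=A^{(k)}(\bet^k) - A_{k+1}$, linear independence of $\mathcal{A}$ guarantees $\widetilde{A}\neq 0$, so Lemma \ref{lem:positivitymaxproblems} applies to the splitting-step problem \eqref{eq:SplittingStepGeneral} and produces $\eps^{k+1}\in E_{ad}$ with $\|C\delta\ypmb_\circ(\widetilde{A},\eps^{k+1};T)\|_2^2 = \langle \vpmb,[W_\circ(\eps^{k+1})]_{[1:k+1,1:k+1]}\vpmb\rangle > 0$. Hence $[W_\circ(\eps^{k+1})]_{[1:k+1,1:k+1]}$ is PD on $\kertxt(\widetilde{G})$, while $\widetilde{G}$ is PD on a complementary subspace, so their sum $[\widehat{W}_\circ^{(k+1)}]_{[1:k+1,1:k+1]}$ is PD, closing the induction.

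The delicate point is verifying the two nonvanishing conditions $A_1\neq 0$ and $A^{(k)}(\bet^k)-A_{k+1}\neq 0$ that unlock Lemma \ref{lem:positivitymaxproblems}; both follow directly from the linear independence of $\mathcal{A}$, but the argument must be made explicit because otherwise the splitting step could produce a trivially zero maximum, in which case the induction would stall. A secondary subtlety is that Lemma \ref{lem:positivitymaxproblems} is stated with PSD$+$PD increment on the \emph{same} $(k+1)$-dimensional subspace; this is why the induction tracks the growing principal submatrix rather than $\widehat{W}_\circ^{(k)}$ itself, and why the kernel identification via Lemma \ref{lem: if A pos def then also next bigger A} is essential to line up the splitting-step direction $\vpmb$ with the only direction in which positivity was missing.
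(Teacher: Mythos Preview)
Your proposal is correct and follows essentially the same inductive argument as the paper's proof, using Lemma~\ref{lem:positivitymaxproblems} for the base case and splitting step, and Lemma~\ref{lem: if A pos def then also next bigger A} to identify the kernel direction in the inductive step. If anything, you are slightly more explicit than the paper in noting that $A_1\neq 0$ and $A^{(k)}(\bet^k)-A_{k+1}\neq 0$ follow from the linear independence of $\mathcal{A}$, which is indeed the hypothesis needed to invoke Lemma~\ref{lem:positivitymaxproblems}.
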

\begin{proof} 
	We proceed by induction.
	Lemma \ref{lem:positivitymaxproblems} guarantees that there exists an $\eps^1$ such that $[W_\circ(\eps^1)]_{1,1}=\norm{C\delta\ypmb_\circ(A_1,\eps;T)}_2^2>0$.
	Now, we assume that $[\widehat{W}_\circ^{(k)}]_{[1:k,1:k]}=\sum_{m=1}^{k}[W_\circ(\eps^m)]_{[1:k,1:k]}$ is PD. 
	By construction, $[\widehat{W}_\circ^{(k+1)}]_{[1:k+1,1:k+1]}$ is PSD.
	Thus, if $[\widehat{W}_\circ^{(k)}]_{[1:k+1,1:k+1]}$ is PD, then 
	\begin{equation*}
	[\widehat{W}_\circ^{(k+1)}]_{[1:k+1,1:k+1]} = [\widehat{W}_\circ^{(k)}]_{[1:k+1,1:k+1]} + [W_\circ(\eps^{k+1})]_{[1:k+1,1:k+1]}
	\end{equation*}
	is PD as well, since $[W_\circ(\eps^k)]_{[1:k+1,1:k+1]}$ is PSD.
	Assume now that the submatrix $[\widehat{W}_\circ^{(k)}]_{[1:k+1,1:k+1]}$ has a nontrivial kernel. 	
	Since $[\widehat{W}_\circ^{(k)}]_{[1:k,1:k]}$ is PD (induction hypothesis), problem \eqref{eq:FittingStepGeneral} is uniquely solvable with solution $\bet^k$. 
	Then, by Lemma \ref{lem: if A pos def then also next bigger A} the (one-dimensional) kernel of $[\widehat{W}_\circ^{(k)}]_{[1:k+1,1:k+1]}$ is the span of the vector $\vpmb =[(\bet^k)^\top,\;-1]^\top$.
	Using Lemma \ref{lem:positivitymaxproblems} we obtain that the solution $\eps^{k+1}$ to the splitting step problem satisfies
	\begin{equation*}
		\langle\vpmb,[W_\circ(\eps^{k+1})]_{[1:k+1,1:k+1]}\vpmb\rangle=\norm{C\delta\ypmb_\circ(A^{(k)}(\bet^k)-A_{k+1},\eps;T)}_2^2>0.
	\end{equation*}
	Thus, $[W(\eps^{k+1})]_{[1:k+1,1:k+1]}$ is PD on the span of $\vpmb$, and
	$[\widehat{W}_\circ^{(k+1)}]_{[1:k+1,1:k+1]}$
	is PD. 
\end{proof}

Now that we proved that Algorithm~\ref{algo:General} makes $\Wcirc$ PD, the obvious question is whether this is sufficient for the convergence of GN, as described in Lemma \ref{lem:gaussnewton}. We answer this question in Section \ref{subsec:PosDefGaussNewtonLinear}.

\subsection{Positive definiteness of the GN matrix}\label{subsec:PosDefGaussNewtonLinear}
To guarantee convergence of GN, we need to show that $\widehat{W}(\al):=\sum_{m=1}^{K}R_m'(\al)^\top R_m'(\al)$ (defined in section \ref{sec:GeneralSetting})
remains PD in a neighborhood of $\al_\star$.
Indeed, in Section \ref{subsec:convanalysisfullyobscont}, we proved that the control functions generated by Algorithm \ref{algo:General} make the GN matrix $\widehat{W}_\circ = \widehat{W}(\al_\circ)$ PD. 
Thus, it is sufficient to prove that $\widehat{W}(\al)$ 
remains PD in a neighborhood of $\al_\circ$ containing $\al_\star$.
To do so, let us rewrite $\widehat{W}(\al)$ as
\begin{align}
[\widehat{W}(\al)]_{i,j}&:=\sum_{m=1}^{K}\langle\gam_i(\al,\eps^m),\gam_j(\al,\eps^m)\rangle,\quad i,j\in\{1,\ldots,K\},\label{eq:Wijstar}\\
\gam_j(\al,\eps^m)&:=\int_{0}^{T}Ce^{(T-s)A(\al)}A_j\ypmb(A(\al),\eps^m;s)ds,\quad j\in\{1,\ldots,K\},\label{eq:Gammajstar}
\end{align}
and recall the next lemma, which follows from the Bauer-Fike theorem \cite{BAUER1960}.
\begin{lemma}[rank stability]\label{lem:rankdisturbedmatrix}
	Consider two natural numbers $N_D$ and $M_D$ with $N_D\geq M_D$, and an arbitrary matrix $D\in\mbbr^{N_D\times M_D}$ with rank $\mathcal{R_D}$ and (positive) singular values $\sigma_1,\ldots,\sigma_{\mathcal{R_D}}$ in descending order. Then it holds that
	\begin{equation*}
	\min_{\widehat{D}\in\mbbr^{N_D\times M_D}}\{\|\widehat{D}\|_2\;\mid\; \ranktxt(D+\widehat{D})<\mathcal{R_D} \}=\sigma_{\mathcal{R_D}}.
	\end{equation*}
\end{lemma}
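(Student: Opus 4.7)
The plan is to establish this as a variant of the classical Eckart--Young--Mirsky theorem, proving the claim by a matching upper and lower bound on the minimum of $\|\widehat{D}\|_2$ over rank-dropping perturbations. The singular value decomposition will be the central tool, and the lower bound will rely on a perturbation inequality for singular values (which is what the Bauer--Fike citation furnishes via Weyl's inequality applied to the Hermitian dilation).

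First I would write the reduced singular value decomposition $D = U \Sigma V^\top$, where $U \in \mbbr^{N_D \times \mathcal{R_D}}$ and $V \in \mbbr^{M_D \times \mathcal{R_D}}$ have orthonormal columns $u_1,\ldots,u_{\mathcal{R_D}}$ and $v_1,\ldots,v_{\mathcal{R_D}}$ respectively, and $\Sigma = \mathrm{diag}(\sigma_1, \ldots, \sigma_{\mathcal{R_D}})$. For the upper bound, I would exhibit the explicit rank-one perturbation $\widehat{D} := -\sigma_{\mathcal{R_D}} u_{\mathcal{R_D}} v_{\mathcal{R_D}}^\top$. A direct computation gives $D + \widehat{D} = \sum_{i=1}^{\mathcal{R_D}-1} \sigma_i u_i v_i^\top$, which has rank at most $\mathcal{R_D} - 1 < \mathcal{R_D}$, and $\|\widehat{D}\|_2 = \sigma_{\mathcal{R_D}}$. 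Thus the minimum is at most $\sigma_{\mathcal{R_D}}$.

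For the lower bound, I would argue by contradiction: assume that there exists $\widehat{D}$ with $\|\widehat{D}\|_2 < \sigma_{\mathcal{R_D}}$ and $\mathrm{rank}(D + \widehat{D}) < \mathcal{R_D}$. Using the perturbation inequality for singular values
\begin{equation*}
|\sigma_i(D + \widehat{D}) - \sigma_i(D)| \leq \|\widehat{D}\|_2 \quad \text{for all } i,
\end{equation*}
(which follows from the Bauer--Fike theorem applied to the symmetric dilation $\bigl[\begin{smallmatrix} 0 & D \\ D^\top & 0 \end{smallmatrix}\bigr]$ whose eigenvalues are $\pm \sigma_i(D)$), one deduces that $\sigma_{\mathcal{R_D}}(D + \widehat{D}) \geq \sigma_{\mathcal{R_D}}(D) - \|\widehat{D}\|_2 > 0$. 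This forces $\mathrm{rank}(D + \widehat{D}) \geq \mathcal{R_D}$, contradicting the assumption.

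The only slightly delicate point is making sure the singular value perturbation inequality is invoked correctly, since the cited Bauer--Fike theorem is normally phrased for eigenvalues of diagonalizable matrices; the standard reduction through the Hermitian dilation (or an equivalent appeal to Weyl's inequality for the symmetric matrices $D^\top D$ and $(D+\widehat{D})^\top(D+\widehat{D})$) handles this cleanly. Combining both bounds yields equality, which is exactly the claim.
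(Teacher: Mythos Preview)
Your proof is correct and follows the standard Eckart--Young--Mirsky argument that the paper alludes to by citing the Bauer--Fike theorem without further detail. The paper does not spell out a proof, so your explicit upper bound via the rank-one SVD truncation together with the lower bound from the singular-value perturbation inequality (obtained from Bauer--Fike on the Hermitian dilation) is exactly the intended route.
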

Using this lemma, we can prove the following approximation result.
\begin{lemma}[positive definiteness of $\widehat{W}(\al)$ (linear systems)]\label{lem:ApproximationWhatalpha}
	Let $\widehat{W}_\circ$ defined in \eqref{eq:Wwidehatcirc_ij} be PD and let $\sigma^\circ_K>0$ be its smallest singular value. Then, there exists $\delta:=\delta(\sigma^\circ_K)>0$ such that $\widehat{W}(\al)$ (in \eqref{eq:Wijstar}) is PD for any $\al\in\mbbr^K$ with $\|\al-\alcirc\|_2<\delta$.
\end{lemma}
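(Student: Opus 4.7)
The plan is a standard continuity/perturbation argument. The matrix $\widehat{W}(\al)$ is a continuous (in fact analytic) function of $\al$, and at $\al=\alcirc$ it coincides with the PD matrix $\widehat{W}_\circ$. Since positive definiteness is preserved under sufficiently small perturbations, the result will follow by choosing $\delta$ small enough that $\|\widehat{W}(\al)-\widehat{W}_\circ\|_2<\sigma^\circ_K$.

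First, I would establish continuity of the map $\al \mapsto \widehat{W}(\al)$. By \eqref{eq:Wijstar}--\eqref{eq:Gammajstar}, each entry is an integral of $Ce^{(T-s)A(\al)}A_j\ypmb(A(\al),\eps^m;s)$. The matrix exponential $e^{(T-s)A(\al)}$ depends continuously on $\al$ uniformly for $s\in[0,T]$. Similarly, standard continuous-dependence-on-parameters results for linear ODEs (applied to \eqref{eq:ODElinear} with $A_\star$ replaced by $A(\al)$) guarantee that $\ypmb(A(\al),\eps^m;\cdot)$ depends continuously on $\al$ in $C([0,T];\mbbr^N)$. Combining these two facts and the boundedness of $C$, $A_j$, and $\eps^m\in E_{ad}$, one gets that $\gam_j(\al,\eps^m)$ is continuous in $\al$, and therefore so are the entries of $\widehat{W}(\al)$.

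Next, I would invoke the perturbation argument. The matrix $\widehat{W}(\al)$ is symmetric and PSD by construction for every $\al\in\mbbr^K$, because it equals $\sum_{m=1}^{K}R_m'(\al)^\top R_m'(\al)$. Hence its singular values coincide with its eigenvalues. Since $\widehat{W}_\circ$ is PD with smallest singular value $\sigma^\circ_K>0$, Lemma \ref{lem:rankdisturbedmatrix} implies that any matrix $D$ with $\|D-\widehat{W}_\circ\|_2<\sigma^\circ_K$ has full rank $K$. Applied to the symmetric PSD matrix $D=\widehat{W}(\al)$, having full rank is equivalent to being PD. (Equivalently, one can appeal to Weyl's inequality, which yields $\lambda_{\min}(\widehat{W}(\al))\geq \sigma^\circ_K - \|\widehat{W}(\al)-\widehat{W}_\circ\|_2$.)

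Finally, combining these two steps, the continuity established in the first step provides a $\delta=\delta(\sigma^\circ_K)>0$ such that $\|\widehat{W}(\al)-\widehat{W}_\circ\|_2<\sigma^\circ_K$ whenever $\|\al-\alcirc\|_2<\delta$, and by the perturbation argument $\widehat{W}(\al)$ is then PD, as claimed. The only mildly delicate step is the uniform-in-$s$ continuity of $s\mapsto e^{(T-s)A(\al)}\ypmb(A(\al),\eps^m;s)$ in $\al$, but this is immediate from the Lipschitz dependence of linear ODE flows on their coefficients on the compact interval $[0,T]$.
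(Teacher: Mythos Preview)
Your proposal is correct and follows essentially the same approach as the paper: establish continuity of $\al\mapsto\widehat{W}(\al)$ via continuity of the matrix exponential and of the ODE solution, then apply Lemma~\ref{lem:rankdisturbedmatrix} to conclude that $\|\widehat{W}(\al)-\widehat{W}_\circ\|_2<\sigma^\circ_K$ preserves full rank, and finally use that $\widehat{W}(\al)$ is PSD by construction to deduce positive definiteness. The only cosmetic difference is that the paper writes out the explicit integral formula $\ypmb(A(\al),\eps^m;s)=\int_0^s e^{(s-\tau)A(\al)}B\eps^m(\tau)\,d\tau$ rather than invoking a general continuous-dependence result.
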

\begin{proof}
	Our first goal is to show that $\widehat{W}(\al)$ is continuous in $\al$. From \eqref{eq:Wijstar} and \eqref{eq:Gammajstar} we know that $\widehat{W}(\al)$ is the sum over products of $\int_{0}^{T}Ce^{(T-s)A(\al)}A_j\ypmb(A(\al),\eps^m;s)ds$, where $\ypmb(A(\al),\eps^m;s)=\int_{0}^{s}e^{(s-\tau)A(\al)}B\eps^m(\tau)d\tau$.
	Now, recall that $A(\al)=\sum_{j=1}^{K}\al_jA_j$, meaning that $A(\al)$ is continuous in $\al$. 
	Since the exponential map $\mbbr^{N}\rightarrow\mbbr^{N\times N}, \al\mapsto e^{sA(\al)}$ and the integral map $\mbbr^{N\times N}\rightarrow\mbbr^N, X\mapsto \int_{0}^{s}XB\eps(\tau)d\tau$ are continuous, we obtain that $\ypmb(A(\al),\eps^m;s)$ is continuous in $\al$. 
	Since products of continuous functions are continuous, we obtain that $\widehat{W}(\al)$ is continuous in $\al$.
	
	By assumption, $\widehat{W}_\circ$ is PD, and therefore $\sigma^\circ_{K}>0$. Since $\widehat{W}(\al)$ is continuous in $\al$, we obtain that there exists a $\delta:=\delta(\sigma^\circ_{K})>0$ such that for any $\al$ with $\norm{\al-\alcirc}_2<\delta(\sigma^\circ_{K})$ it holds that $\norm{\widehat{W}(\al)-\widehat{W}\klam{\alcirc}}_2<\sigma^\circ_{K}$. 
	Now, let $\widehat{\al}$ be such that $\norm{\widehat{\al}-\alcirc}_2<\delta(\sigma^\circ_{K})$ and hence $\norm{\widehat{W}(\widehat{\al})-\widehat{W}(\alcirc)}_2<\sigma^\circ_{K}$.
	Setting $D=\widehat{W}(\alcirc)$ and $\widehat{D}=\widehat{W}(\widehat{\al})-\widehat{W}(\alcirc)$, Lemma \ref{lem:rankdisturbedmatrix} implies that $K=\ranktxt(\widehat{W}(\alcirc))\leq\ranktxt(\widehat{W}(\widehat{\al}))$. 
	Because of~\eqref{eq:Wijstar}, $\widehat{W}(\widehat{\al})\in\mbbr^{K\times K}$ meaning that $\ranktxt(\widehat{W}(\widehat{\al}))=K$.
	Since $\widehat{W}(\al)$ is PSD by construction, $\ranktxt(\widehat{W}(\widehat{\al}))=K$ implies that $\widehat{W}(\widehat{\al})$ is PD.
\end{proof}

Lemma \ref{lem:ApproximationWhatalpha} implies that the positive definiteness of $\widehat{W}(\al)$ is locally preserved near $\al_\circ$.
Now, we can prove our main convergence result.

\begin{theorem}[convergence of GN (linear systems)]\label{thm:ConvergenceTrueSystem}
	Let $\alcirc\in\mbbr^K$ be such that the matrices $A(\alcirc)\in\mathbb{R}^{N \times N}$, $B\in\mathbb{R}^{N \times M}$ and $C\in\mathbb{R}^{P \times N}$ satisfy $\ranktxt(\mathcal{O}_N^\circ)\cdot\ranktxt(\mathcal{C}_N^\circ)=N^2$.
	Let $(\eps^m)_{m=1}^K\subset E_{ad}$ be a set of controls generated by Algorithm~\ref{algo:General}.
	Finally, let $\widehat{\sigma}_K$ be the $K$-th (smallest) singular value of $\widehat{W}_\circ$ defined in \eqref{eq:Wwidehatcirc_ij}. 
	Then there exists $\delta=\delta(\widehat{\sigma}_K)>0$ such that if $\alstar\in\mbbr^K$ satisfies $\|\alstar-\alcirc\|<\delta$, then GN method for the problem
	\begin{equation}\label{eq:identificationlinear}
	\min_{\al\in\mbbr^K}\frac{1}{2}\sum_{m=1}^K\norm{C\ypmb(A(\alstar),\eps^m;T)-C\ypmb\klam{A(\al),\eps^m;T}}_2^2,
	\end{equation}
	initialized with $\alcirc$, converges to $\al_j=\al_{\star,j}$, $j=1,\ldots,K$.
\end{theorem}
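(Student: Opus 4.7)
The plan is to verify all hypotheses of Lemma \ref{lem:gaussnewton} so that it can be applied to the residuals $R_m$ defined in \eqref{eq:rm}, and then to exploit Lemma \ref{lem:ApproximationWhatalpha} together with Theorem \ref{thm:convergence_fully_obscont} to ensure that the Gauss--Newton matrix stays positive definite along the iterates.

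First I would dispatch the easy items. Since $\Astar=A(\alstar)$ and $\vphipmb^\star_{data}(\eps^m)=C\ypmb(A(\alstar),\eps^m;T)$, we have $R_m(\alstar)=0$ automatically. Moreover, $R_m(\al)=C\ypmb(A(\al),\eps^m;T)-\vphipmb^\star_{data}(\eps^m)$, and for the linear ODE \eqref{eq:ODElinear} the state at time $T$ is $\ypmb(A(\al),\eps^m;T)=\int_0^T e^{(T-s)A(\al)}B\eps^m(s)\,ds$; since $\al\mapsto A(\al)$ is linear and the matrix exponential is real analytic, $R_m$ is $C^\infty$ and therefore Lipschitz continuously differentiable in a neighborhood of $\alstar$.

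Next I would handle the positive definiteness hypothesis. By Theorem \ref{thm:convergence_fully_obscont}, the controls $(\eps^m)_{m=1}^K$ produced by Algorithm \ref{algo:General} make $\widehat{W}_\circ=\widehat{W}(\alcirc)$ positive definite, with smallest singular value $\widehat{\sigma}_K>0$. Lemma \ref{lem:ApproximationWhatalpha} then furnishes a radius $\delta_1=\delta_1(\widehat{\sigma}_K)>0$ such that $\widehat{W}(\al)$ is PD for every $\al\in\mathcal{B}_{\delta_1}^K(\alcirc)$. Let $\delta_2>0$ denote the radius of quadratic attraction guaranteed by Lemma \ref{lem:gaussnewton} once the structural assumptions on $R_m$ and the PD condition are in force in $\mathcal{B}_{\delta_1}^K(\alcirc)$.

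The main obstacle, and the point requiring care, is the circularity of the PD hypothesis in Lemma \ref{lem:gaussnewton}: we need $\widehat{W}(\al_c)$ PD \emph{along} the iterates, yet the iterates are defined only once GN is run. To close this loop I would set $\delta:=\tfrac{1}{2}\min(\delta_1,\delta_2)$ and assume $\|\alstar-\alcirc\|<\delta$. Then $\alstar\in\mathcal{B}_{\delta_1/2}^K(\alcirc)$, so $\alcirc\in\mathcal{B}_{\delta_1/2}^K(\alstar)$. Because the quadratic error bound of Lemma \ref{lem:gaussnewton} gives monotone decrease of $\|\al_c-\alstar\|$ as long as the PD property holds, a straightforward induction on the iteration index shows that the whole sequence $(\al_c)$ remains in $\mathcal{B}_{\delta_1/2}^K(\alstar)\subset\mathcal{B}_{\delta_1}^K(\alcirc)$; consequently $\widehat{W}(\al_c)$ is PD at every step, the hypotheses of Lemma \ref{lem:gaussnewton} are preserved, and GN initialized at $\alcirc$ converges quadratically to $\alstar$, which completes the proof.
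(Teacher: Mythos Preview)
Your proposal is correct and follows essentially the same route as the paper: invoke Theorem~\ref{thm:convergence_fully_obscont} to get $\widehat{W}_\circ$ PD, use Lemma~\ref{lem:ApproximationWhatalpha} to extend positive definiteness to a neighborhood of $\alcirc$, note Lipschitz differentiability of $R_m$, and conclude via Lemma~\ref{lem:gaussnewton}. The paper's proof is terser and simply states that the result ``follows by Lemma~\ref{lem:gaussnewton}'' once $\|\alstar-\alcirc\|<\delta$, whereas you take the extra (and commendable) step of arguing explicitly, via monotone error decrease and induction, that the iterates never leave the ball where $\widehat{W}(\al)$ is PD; this closes a small gap that the paper leaves implicit.
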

\begin{proof}
	Theorem \ref{thm:convergence_fully_obscont} guarantees that $\widehat{W}_\circ$ is PD and hence $\widehat{\sigma}_K>0$. 
	Thus, by Lemma \ref{lem:ApproximationWhatalpha} there exists $\delta=\delta(\widehat{\sigma}_K)>0$ such that, for $\al\in\mbbr^K$ with $\|\al-\alcirc\|_2<\delta$, the matrix $\widehat{W}(\al)$ is also PD.
	Moreover, we know from section \ref{sec:GeneralSetting} that $\widehat{W}(\al_c)$ is the GN matrix for the iterate $\al_c\in\mbbr^K$ of GN for \eqref{eq:identificationproblem}.
	Analogously to the proof of Lemma \ref{lem:ApproximationWhatalpha}, one can also show that the functions $R_m(\al)$, defined in \eqref{eq:rm}, are Lipschitz continuously differentiable in $\al$ for all $m\in\{1,\ldots,K\}$.
	Hence, if $\|\alstar-\alcirc\|<\delta$, then the result follows by Lemma \ref{lem:gaussnewton}.
\end{proof}

\subsection{Local uniqueness of solutions}\label{subsec:LocUniqSolLinear}
Theorem \ref{thm:ConvergenceTrueSystem} says that GN converges to $\alstar$ if an appropriate initialization vector $\alcirc$ is used.
However, 
in the linear case corresponding to  
\eqref{eq:ODElinear} 
we can specify the local properties of problem~\eqref{eq:identificationproblem} around the solution $\alstar$.
To this end, we start by rewriting the cost function in a matrix form.
\begin{lemma}[online identification problem in matrix form (linear systems)]\label{lem:identificationinmatrixform}
	Problem \eqref{eq:identificationproblem} is equivalent to
	\begin{equation}\label{eq:identificationproblemW}
	\min_{\al\in\mbbr^K}
	\frac{1}{2}\langle\alstar-\al,\widetilde{W}\klam{\alstar,\al}(\alstar-\al)\rangle,
	\end{equation}
	where $\widetilde{W}\klam{\alstar,\al}\in\mbbr^{K\times K}$ is defined as\footnote{Notice that the notations \eqref{eq:Wijstar} and \eqref{eq:Wwidehatstar} are related in the sense that $\widetilde{W}(\al,\al)=\widehat{W}(\al)$.}
	\begin{equation}\label{eq:Wwidehatstar}
	\widetilde{W}\klam{\alstar,\al} := \sum_{m=1}^{K}W\klam{\alstar,\al,\eps^m},
	\end{equation}
	with $W\klam{\alstar,\al,\eps^m}\in\mbbr^{K\times K}$ given by
	\begin{align}
	[W\klam{\alstar,\al,\eps^m}]_{i,j}&:=\langle\gam_i\klam{\alstar,\al,\eps^m},\gam_j\klam{\alstar,\al,\eps^m}\rangle,\quad i,j\in\{1,\ldots,K\},\label{eq:Walijstar}\\
	\gam_j\klam{\alstar,\al,\eps^m}&:=\int_{0}^{T}Ce^{(T-s)A(\alstar)}A_j\ypmb\klam{A(\al),\eps^m;s}ds,\quad j\in\{1,\ldots,K\}.\label{eq:Gammaaljstar}
	\end{align}
\end{lemma}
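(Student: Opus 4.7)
The plan is to rewrite the difference $\ypmb\bigl(A(\alstar),\eps^m;T\bigr)-\ypmb\bigl(A(\al),\eps^m;T\bigr)$ as a linear combination of $(\alstar_j-\al_j)$ with vector coefficients given exactly by the $\gam_j$ in \eqref{eq:Gammaaljstar}. The key observation is that, although both $\ypmb(A(\alstar),\eps^m;\cdot)$ and $\ypmb(A(\al),\eps^m;\cdot)$ depend nonlinearly on the parameters, their difference satisfies a linear inhomogeneous ODE whose source term is explicitly linear in $\alstar-\al$.

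More precisely, I would first set $\zpmb(t):=\ypmb\bigl(A(\alstar),\eps^m;t\bigr)-\ypmb\bigl(A(\al),\eps^m;t\bigr)$ and subtract the two copies of \eqref{eq:ODElinear}, obtaining
\begin{equation*}
\dot{\zpmb}(t)=A(\alstar)\zpmb(t)+\bigl(A(\alstar)-A(\al)\bigr)\ypmb\bigl(A(\al),\eps^m;t\bigr),\quad \zpmb(0)=0,
\end{equation*}
since the control terms $B\eps^m$ cancel and the initial states coincide. Variation of constants yields
\begin{equation*}
\zpmb(T)=\int_0^T e^{(T-s)A(\alstar)}\bigl(A(\alstar)-A(\al)\bigr)\ypmb\bigl(A(\al),\eps^m;s\bigr)\,ds,
\end{equation*}
and expanding $A(\alstar)-A(\al)=\sum_{j=1}^K(\alstar_j-\al_j)A_j$, then multiplying by $C$ and interchanging the finite sum with the integral, gives
\begin{equation*}
C\zpmb(T)=\sum_{j=1}^K(\alstar_j-\al_j)\,\gam_j\bigl(\alstar,\al,\eps^m\bigr),
\end{equation*}
with $\gam_j$ exactly as in \eqref{eq:Gammaaljstar}.

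The rest is routine algebra: squaring and using $\|\sum_j \xi_j v_j\|_2^2=\langle\xi,Gv\,\xi\rangle$ with $[G]_{i,j}=\langle v_i,v_j\rangle$, one recognises $\|C\zpmb(T)\|_2^2=\langle\alstar-\al,\,W(\alstar,\al,\eps^m)(\alstar-\al)\rangle$ with $W$ defined by \eqref{eq:Walijstar}. Summing over $m=1,\dots,K$ and dividing by $2$ reproduces \eqref{eq:identificationproblemW} with $\widetilde{W}$ given by \eqref{eq:Wwidehatstar}. No step presents a real obstacle: the only subtlety is keeping the $C$ and the matrix exponential $e^{(T-s)A(\alstar)}$ attached to the \emph{starred} trajectory while the inner state $\ypmb(A(\al),\eps^m;s)$ carries the argument $\al$, which is precisely how $\gam_j$ is written in \eqref{eq:Gammaaljstar} and explains the mixed notation $\widetilde{W}(\alstar,\al)$.
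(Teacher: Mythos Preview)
Your proposal is correct and follows essentially the same approach as the paper's proof: define the difference of the two trajectories, derive the linear inhomogeneous ODE it satisfies, solve by variation of constants, expand $A(\alstar)-A(\al)$ in the basis, and recognise the resulting Gram matrix. The only cosmetic difference is notation ($\zpmb$ versus the paper's $\Delta\ypmb_m$).
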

\begin{proof}
	Let $J(\al):=\frac{1}{2}\sum_{m=1}^{K}\norm{C\ypmb\klam{\Astar,\eps^m;T}-C\ypmb\klam{A(\al),\eps^m;T}}_2^2$. For $t\in[0,T]$ and $m\in\{1,\ldots,K \}$ define $\Delta\ypmb_m(t):=\ypmb\klam{\Astar,\eps^m;t}-\ypmb\klam{A(\al),\eps^m;t}$.
	Then we have
	\begin{align*}
	\dot{\Delta\ypmb}_m(t)&=A(\alstar)\ypmb\klam{\Astar,\eps^m;t}+B\eps^m(t)-A(\al)\ypmb\klam{A(\al),\eps^m;t}-B\eps^m(t)\\
	&=A(\alstar)\Delta\ypmb_m(t) + A(\alstar-\al)\ypmb\klam{A(\al),\eps^m;t},
	\end{align*}
	whose solution at time $T$ is given by
	\begin{equation*}
	\Delta\ypmb_m(T)=\int_{0}^{T}e^{(T-s)A(\alstar)}\Big[A(\alstar-\al)\ypmb\klam{A(\al),\eps^m;s}\Big]ds.
	\end{equation*}
	Thus, recalling $A(\al)=\sum_{j=1}^{K}\al_jA_j$, the function $J(\al)$ can be written as
	\begin{align*}
	J(\al)&=\frac{1}{2}\sum_{m=1}^{K}\norm{\int_{0}^{T}Ce^{(T-s)A(\alstar)}\Big(\sum_{j=1}^{K}(\al_{\star,j}-\al_j)A_k \Big)\ypmb\klam{A(\al),\eps^m;s}ds}_2^2\\
	&\overset{\eqref{eq:Gammaaljstar}}{=}\frac{1}{2}\sum_{m=1}^{K}\sum_{i=1}^{K}\sum_{j=1}^{K}(\al_{\star,i}-\al_i)(\al_{\star,j}-\al_j)\langle\gam_i\klam{\alstar,\al,\eps^m},\gam_j\klam{\alstar,\al,\eps^m}\rangle\\
	&\overset{\eqref{eq:Walijstar}}{=}\frac{1}{2}\langle\alstar-\al,\sum_{m=1}^{K}W\klam{\alstar,\al,\eps^m}(\alstar-\al)\rangle=\frac{1}{2}\langle\alstar-\al,\widetilde{W}\klam{\alstar,\al}(\alstar-\al)\rangle.
	\end{align*}
\end{proof}
Now, the set of global solutions to problem \eqref{eq:identificationproblemW} is given by $\mathcal{S}_{global}:= \Big\{\al\in\mbbr^K : (\alstar-\al)\in\kertxt\; \widetilde{W}(\alstar,\al) \Big\}$.
Since $\widetilde{W}(\alstar,\al)$ is symmetric PSD, \eqref{eq:identificationproblemW} is locally uniquely solvable if and only if $\widetilde{W}(\alstar,\al)$ is PD for $\al$ close to $\alstar$.
Now, assume that the system is fully observable and controllable, meaning that $\mathcal{R}=N^2$.
Theorem \ref{thm:ConvergenceTrueSystem} guarantees that Algorithm \ref{algo:General} computes $(\eps_m)_{m=1}^{N^2}$ such that $\widehat{W}(\alstar)=\widetilde{W}(\alstar,\alstar)$ is PD, if $\alstar$ is close enough to the estimate $\alcirc$.
Similar to the proof of Lemma \ref{lem:ApproximationWhatalpha}, one can prove that $\widetilde{W}(\alstar,\al)$ is continuous in $\al$.
Hence, we obtain that if the matrix $\widetilde{W}(\alstar,\alstar)$ is PD, then the same is true for $\widetilde{W}(\alstar,\al)$, when $\al$ is close to $\alstar$, which implies that \eqref{eq:identificationproblemW} is locally uniquely solvable with $\al=\alstar$.

\section{Bilinear reconstruction problems}\label{sec:BilinearReconstruction}
In this section, we extend the results of section~\ref{sec:LinearReconstruction} to the case of skew-symmetric bilinear systems. We consider \eqref{eq:ODE} with a right-hand side $f(A,\ypmb,\epsilon)=(A+\epsilon B)\ypmb$, that is
\begin{equation}\label{eq:ODEbilinear}
\dot{\ypmb}(t)=(\Astar+\epsilon(t) B)\ypmb(t),\; t \in(0,T], \quad\ypmb(0)=\ypmb^0,
\end{equation}
where $B\in\mathfrak{so}(N)$ is a given skew-symmetric matrix for $N\in\mbbn^+$, the initial state is $\ypmb^0\in\mathbb{R}^N$, and \comment{$\epsilon\in E_{ad}\subset L^2(0,T;\mbbr)$ (with $M=1$; see section \ref{sec:notation}) is a control function.}
The matrix $\Astar\in\mathfrak{so}(N)$ is unknown and assumed to lie in the space spanned by a set of linearly independent matrices $\mathcal{A}=\{A_1,\ldots,A_K\}\subset\mathbb{R}^{N\times N}$, $1\leq K\leq N^2$, and we write $\Astar = \sum_{j=1}^{K}\al_{\star,j}A_j=:A(\alstar)$.
\comment{Since} the matrices $\Astar$ and $B$ are skew-symmetric, system \eqref{eq:ODEbilinear} is norm preserving, i.e. $\|\ypmb(t)\|_2=\|\ypmb^0\|_2$ for all $t\in[0,T]$.\footnote{To see this, we observe that $\frac{1}{2}\frac{d}{dt}\|\ypmb(t)\|^2_2=\langle\ypmb(t),\dot{\ypmb}(t)\rangle=\langle\ypmb(t),(\Astar+\epsilon(t) B)\ypmb(t)\rangle=0$.}

To identify the true matrix $\Astar$, one can consider a set of control functions $(\epsilon^m)_{m=1}^{K}\subset E_{ad}$ and use it experimentally to obtain the data $(\vphipmb^\star_{data}(\epsilon^m))_{m=1}^K\subset\mbbr^P$, as defined in \eqref{eq:laboratorydata}.
The unknown vector $\alstar$ is then obtained by solving the problem
\begin{equation}\label{eq:IdentificationProblemBilinear}
\min_{\al\in\mbbr^K}\frac{1}{2}\sum_{m=1}^{K}\norm{\vphipmb^\star_{data}(\epsilon^m)-C\ypmb(A(\al),\epsilon^m;T)}_2^2.
\end{equation}
We assume to be provided with a known estimate $\alcirc$ of $\alstar$.
For this estimate, we can derive the linearized equation
\begin{equation}\label{eq:ODElinearizedBilinear}
\resizebox{.91\hsize}{!}{$\begin{cases} 
\dot{\delta\ypmb}_{\circ}(t)=(A_\circ+\epsilon(t)B)\delta\ypmb_\circ(t)+\sum_{j=1}^{K}\delta\al_j A_j\ypmb_\circ(t),\quad t \in(0,T],\quad \delta\ypmb_\circ(0)=0, \\
\dot{\ypmb}_\circ(t)=(A_\circ+\epsilon(t)B)\ypmb_\circ(t),\quad t \in(0,T],\quad \ypmb_\circ(0)=\ypmb^0,
\end{cases}$}
\end{equation}
where $A_\circ:=A(\alcirc)$.
Denoting by $\delta\ypmb_\circ(A(\delta\al),\epsilon;t)$ the solution of \eqref{eq:ODElinearizedBilinear} at time $t\in[0,T]$, the GN matrix $\widehat{W}_\circ$ is defined as in \eqref{eq:Wwidehatcirc_ij}, and LGR is detailed in Algorithm \ref{algo:General}.

Let us recall the following definition and result from \cite[Corollary 4.11]{LibroQuantum}. 
\begin{definitionandlemma}[Controllability of skew-symmetric bilinear systems]\label{deflem:ContrBilinear}
	Consider a system of the form 
	\begin{equation}\label{eq:BilinearSystem}
	\dot{\ypmb}(t)=(A_\circ+\epsilon(t)B)\ypmb(t),\quad \ypmb(0)=\ypmb^0,
	\end{equation}
	where $A_\circ,B\in\mathfrak{so}(N)$. 
	System \eqref{eq:BilinearSystem} is said to be controllable if for any final state $\ypmb^f$ that lies on the sphere of radius $\|\ypmb^0\|_2$ there exists a control $\epsilon(t)$ that transfers $\ypmb^0$ to $\ypmb^f$.
	Furthermore, if the Lie algebra $L=Lie\{A_\circ,B\}\subset\mathfrak{so}(N)$, generated by the matrices $A_\circ$ and $B$, has dimension $\frac{N(N-1)}{2}$, then there exists a constant $\tilde{t}\geq0$ such that for any $T\geq \tilde{t}$ controllability of \eqref{eq:BilinearSystem} holds.
\end{definitionandlemma}

As in section \ref{sec:LinearReconstruction}, we also need to make some assumptions on the observability of the linearized equation in \eqref{eq:ODElinearizedBilinear}.
However, recalling the proof of Lemma \ref{lem:positivitymaxproblems}, these assumptions are only required to prove the existence of a control function that guarantees a positive cost function value in the splitting step.
If we assume this function to be constant, at least on a subinterval of $[0,T]$, then we get a system of the form
\begin{equation}\label{eq:ODEBilinearConstantControl}
\dot{\delta\ypmb}_\circ(t)=(A_\circ+cB)\delta\ypmb_\circ(t)+A(\delta\al)\ypmb_\circ(t),
\end{equation}
for a scalar $c\in\mbbr$.
In this case, system \eqref{eq:ODEBilinearConstantControl} is again a linear system, for which observability is defined in Definition \ref{def:ObservabilityLinear}.
Hence, the observability matrix is $\mathcal{O}_N(C,A_\circ+cB)$.
Let us state our assumptions on controllability and observability of \eqref{eq:BilinearSystem} and \eqref{eq:ODEBilinearConstantControl}.
\begin{assumption}\label{assum:Bilinear}
	Let the matrices $A_\circ$, $B$ and $C$ be such that the following conditions are satisfied.
	\begin{enumerate}\itemsep0em
		\item The Lie algebra $L=Lie\{A_\circ,B\}\subset\mathfrak{so}(N)$, generated by the matrices $A_\circ$ and $B$, has dimension $\frac{N(N-1)}{2}$.
		\item The final time $T>0$ is sufficiently large, such that the controllability result from Lemma \ref{deflem:ContrBilinear} holds.
		\item There exists $c\in\mbbr$ such that system \eqref{eq:ODEBilinearConstantControl} is observable, i.e. the observability matrix $\mathcal{O}_N(C,A_\circ+cB)$ has full rank.
	\end{enumerate}
	In addition, let the set of admissible controls $E_{ad}\subset L^2(0,T;\mbbr)$ be chosen such that the controllability result from Lemma \ref{deflem:ContrBilinear} holds, and such that $\epsilon\equiv c$ is an interior point of $E_{ad}$ for the constant $c\in\mbbr$ mentioned above.
\end{assumption}

\begin{remark}
		The analysis presented in the following sections can be applied to the case where the matrix $A=A_\star$ is assumed to be known and $B=B(\al):=\sum_{j=1}^K\al_jB_j$ is unknown and to be identified.
		The main differences in the case of the identification of $B$ is that the state equation is linearized around an initial guess $B_\circ$, leading to
		\begin{equation*}
		\resizebox{.91\hsize}{!}{$\begin{cases} 
			\dot{\delta\ypmb}_{\circ}(t)=(A+\epsilon(t)B_\circ)\delta\ypmb_\circ(t)+\sum_{j=1}^{K}\delta\al_j \epsilon(t)B_j\ypmb_\circ(t),\quad t \in(0,T],\quad \delta\ypmb_\circ(0)=0, \\
			\dot{\ypmb}_\circ(t)=(A+\epsilon(t)B_\circ)\ypmb_\circ(t),\quad t \in(0,T],\quad \ypmb_\circ(0)=\ypmb^0.
			\end{cases}$}
		\end{equation*}
		Assumption \ref{assum:Bilinear} would be the same, only with $A$ instead of $A_\circ$ and $B_\circ$ instead of $B$.
		Notice that, in this case, we also cover Schr\"odinger-type systems of the form
		\begin{equation*}
			i\dot{\psipmb}(t)=(H+\epsilon(t)\mu_\star)\psipmb(t), \; t \in(0,T], \quad\psipmb(0)=\psipmb^0,
		\end{equation*}
		as considered in \cite{madaysalomon}, for Hermitian matrices $H,\mu_\star\in\mathbb{C}^{N\times N}$.
		This can be seen by writing $\psipmb=\psipmb_R+i\psipmb_I$, $\psipmb^0=\psipmb^0_R+i\psipmb^0_I$, $H=H_R+iH_I$ and $\mu_\star=\mu_{\star,R}+i\mu_{\star,I}$, to get
		\begin{equation}\label{eq:ODESchroedingerReal}
			\dot{\ypmb}(t)=\Bigg(\underbrace{\begin{bmatrix}
			H_I&H_R\\-H_R&H_I
			\end{bmatrix}}_{=:A}+\epsilon(t)
			\underbrace{\begin{bmatrix}
			\mu_{\star,I}&\mu_{\star,R}\\-\mu_{\star,R}&\mu_{\star,I}
			\end{bmatrix}}_{:=B_\star}\Bigg)\ypmb(t),
		\end{equation}
		for $\ypmb(t):=\begin{bmatrix} \psipmb_R(t)&\psipmb_I(t)\end{bmatrix}^\top$ and skew-symmetric matrices $A,B_\star\in\mbbr^{N\times N}$ (compare also \cite[Section 2.12.2]{LibroQuantum}).
\end{remark}

\subsection{Analysis for skew-symmetric bilinear systems}
We show in this section that Assumption \ref{assum:Bilinear} is a sufficient condition for the GN matrix $\Wcirc$, defined as in \eqref{eq:Wwidehatcirc_ij}, to be PD if the controls generated by Algorithm~\ref{algo:General} are used.
The idea of the analysis is similar to the one considered in section \ref{sec:LinearReconstruction}, meaning that we first have to show the existence of a control that makes the cost function of \eqref{eq:SplittingStepGeneral} strictly positive.
\begin{lemma}[GR initialization and splitting steps (bilinear systems)]\label{lem:nonzerosplitbilin}
	Let the matrices $A_\circ$, $B$ and $C$ satisfy Assumption \ref{assum:Bilinear}. Let $\widetilde{A}\in\spantxt (\mathcal{A})$ be an arbitrary matrix. If $T>0$ is sufficiently large, then any solution $\widetilde{\epsilon}$ to the problem
	\begin{align*}
	\max_{\epsilon\in E_{ad}} \, \norm{C\delta\ypmb_\circ\klam{\widetilde{A},\epsilon;T}}_2^2, \;
	s.t.\;\;\dot{\delta\ypmb}_{\circ}(t)&=(A_\circ+\epsilon(t)B)\delta\ypmb_\circ(t)+\widetilde{A}\ypmb_\circ(t), \quad \delta\ypmb_\circ(0)=0,\\
	\dot{\ypmb}_\circ(t)&=(A_\circ+\epsilon(t)B)\ypmb_\circ(t), \quad \ypmb_\circ(0)=\ypmb^0,
	\end{align*}
	satisfies $\norm{C\delta\ypmb_\circ\klam{\widetilde{A},\widetilde{\epsilon};T}}_2^2>0$.
\end{lemma}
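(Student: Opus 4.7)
The plan is to mirror the proof of Lemma \ref{lem:positivitymaxproblems} in the bilinear setting: it is enough to exhibit one admissible control $\widetilde{\epsilon}\in E_{ad}$ with $C\delta\ypmb_\circ(\widetilde{A},\widetilde{\epsilon};T)\neq 0$, because the objective in the max problem is nonnegative and the supremum is attained. I assume $\widetilde{A}\neq 0$ (otherwise the statement is vacuous). Since $\widetilde{A}\neq 0$, one can pick $\wpmb\in\mbbr^N$ on the sphere of radius $\|\ypmb^0\|_2$ with $\widetilde{A}\wpmb\neq 0$. By Assumption \ref{assum:Bilinear}(1)--(2) and Lemma \ref{deflem:ContrBilinear}, for $T$ sufficiently large there exist $t_0\in(0,T)$ and an admissible $\widehat{\epsilon}\in L^2(0,t_0;\mbbr)$ steering the bilinear state from $\ypmb^0$ to $\wpmb$ at time $t_0$.

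I would then extend the control by the constant $c\in\mbbr$ from Assumption \ref{assum:Bilinear}(3) on $(t_0,T]$, so that $F:=A_\circ+cB$ satisfies $\ranktxt\,\mathcal{O}_N(C,F)=N$; the joint $\widetilde{\epsilon}$ is admissible because $c$ is an interior point of $E_{ad}$. On $[t_0,T]$ one has $\ypmb_\circ(t)=e^{(t-t_0)F}\wpmb$, and the variation-of-constants formula on this sub-interval yields
\begin{equation*}
C\delta\ypmb_\circ(T) = Ce^{\tau F}\delta\ypmb_\circ(t_0) + \int_0^{\tau} Ce^{(\tau-u)F}\widetilde{A}\,e^{uF}\wpmb\,du,\qquad \tau:=T-t_0.
\end{equation*}
By the analyticity argument used in Lemma \ref{lem:positivitymaxproblems}, applied to $t\mapsto Ce^{tF}\widetilde{A}\wpmb$, the full-rank observability of $\mathcal{O}_N(C,F)$ combined with $\widetilde{A}\wpmb\neq 0$ produces $\hat{t}>0$ with $Ce^{\hat{t}F}\widetilde{A}\wpmb\neq 0$, so that the integrand in the displayed identity is nontrivial near $u=\tau$.

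The hard part will be to prevent the residual $Ce^{\tau F}\delta\ypmb_\circ(t_0)$ from cancelling the principal integral contribution. In the linear case, $\ypmb^0=0$ enables the scaling $\widehat{\epsilon}\mapsto\lambda\widehat{\epsilon}$ to decouple these two terms; this trick does not transport directly to the bilinear case. The plan is to view $h(\tau):=C\delta\ypmb_\circ(T)$ as a real-analytic function of $\tau$ and to compute its Taylor expansion at $\tau=0$, whose $j$-th coefficient has the form $CF^j\delta\ypmb_\circ(t_0)+\sum_{k=0}^{j-1}CF^{j-1-k}\widetilde{A}F^k\wpmb$. If $h\equiv 0$ identically in $\tau$, then iterating these identities together with the observability of $(C,F)$ and with the residual freedom in the choice of $\widehat{\epsilon}$ (which, through the linearized bilinear dynamics, prescribes $\delta\ypmb_\circ(t_0)$) should ultimately force $\widetilde{A}\wpmb=0$, contradicting the construction of $\wpmb$. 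Hence $h$ is non-constant, and the required strictly positive observation is attained for a suitable $\tau$, that is for $T$ sufficiently large, as stipulated by the lemma.
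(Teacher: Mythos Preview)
Your construction matches the paper's: choose $\wpmb$ on the sphere with $\widetilde{A}\wpmb\neq 0$, steer $\ypmb_\circ$ to $\wpmb$ at some time $t_0$ using controllability, then extend by the constant control $c$ so that $F:=A_\circ+cB$ is observable. Your variation-of-constants formula for $C\delta\ypmb_\circ(T)$ on the constant piece is correct, and you are right that the only issue is the possible cancellation between the homogeneous term $Ce^{\tau F}\delta\ypmb_\circ(t_0)$ and the integral term.

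The genuine gap is in your proposed resolution of this cancellation. You appeal to ``the residual freedom in the choice of $\widehat{\epsilon}$ (which, through the linearized bilinear dynamics, prescribes $\delta\ypmb_\circ(t_0)$)'' to force $\widetilde{A}\wpmb=0$. But this freedom is not available in the way you need: once $\widehat{\epsilon}$ is constrained to steer $\ypmb_\circ$ from $\ypmb^0$ to $\wpmb$, the value $\delta\ypmb_\circ(t_0)$ is determined by the coupled linearized equation and cannot be prescribed independently. There is no simultaneous controllability result for the pair $(\ypmb_\circ,\delta\ypmb_\circ)$ at hand, and your Taylor recursion $CF^j\delta\ypmb_\circ(t_0)+\sum_{k=0}^{j-1}CF^{j-1-k}\widetilde{A}F^k\wpmb=0$ by itself does not force $\widetilde{A}\wpmb=0$.

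The paper avoids this difficulty altogether by a different device. Instead of trying to control or vary $\delta\ypmb_\circ(t_0)$, it absorbs it into a single analytic function of $t$: setting
\[
\gpmb(t)\;:=\;e^{-tF}\delta\ypmb_\circ(t)\;=\;e^{-t_0 F}\delta\ypmb_\circ(t_0)\;+\;\int_{t_0}^{t} e^{-sF}\widetilde{A}\,e^{(s-t_0)F}\wpmb\,ds,
\]
one computes $\gpmb'(t)=e^{-tF}\widetilde{A}\,e^{(t-t_0)F}\wpmb$, which at $t=t_0$ equals $e^{-t_0 F}\widetilde{A}\wpmb\neq 0$. Hence $\gpmb$ is a nonconstant real-analytic vector function with only isolated zeros. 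The paper then concludes, using the observability of $(C,F)$ (so that $t\mapsto Ce^{tF}\upmb$ has isolated roots for every $\upmb\neq 0$), that $t\mapsto Ce^{tF}\gpmb(t)=C\delta\ypmb_\circ(t)$ is analytic with only isolated roots, yielding a $T>t_0$ with $C\delta\ypmb_\circ(T)\neq 0$. The point is that the residual $\delta\ypmb_\circ(t_0)$ enters only as the constant of integration in $\gpmb$ and is rendered harmless by the nonvanishing of $\gpmb'$; no freedom in $\widehat{\epsilon}$ is invoked.
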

\begin{proof}
	It is sufficient to construct an $\widehat{\epsilon}_{c}\in E_{ad}$ such that $C\delta\ypmb_\circ(\widetilde{A},\widehat{\epsilon}_{c};T)\neq~0$ for $T$ sufficiently large.
	Let us define 
	$\widehat{\epsilon}_{c}(s):=\begin{cases}
	\widehat{\epsilon}(s),& \textnormal{for }0\leq s\leq \widehat{t},\\
	c,&\textnormal{for }\widehat{t}< s\leq T,
	\end{cases}$
	where $c\in\mbbr$, $\widehat{\epsilon}\in E_{ad}$, $T>0$ and $\widehat{t}\in(0,T)$ are to be chosen.
	Since $\widetilde{A}\neq 0$, there exists $\vpmb\in \mbbr^N, \|\vpmb\|_2=\|\ypmb^0\|_2$ such that $\widetilde{A}\vpmb\neq0$.
	By the first and second part of Assumption \ref{assum:Bilinear}, we know that \eqref{eq:BilinearSystem} is controllable on the sphere of radius $\|\ypmb^0\|_2$, meaning that there exist $\widehat{t}>0$ and $\widehat{\epsilon}\in E_{ad}$ such that $\ypmb_\circ(\widetilde{A},\widehat{\epsilon};\widehat{t})=\vpmb$.
	Defining $A_{c}:=A_\circ+cB$, we notice that $f_{\vpmb}(t):=\widetilde{A}e^{tA_c}\vpmb$ is analytic in $t$, and since $f_{\vpmb}(0)=\widetilde{A}\vpmb\neq0$, it is not equal to zero everywhere and therefore has only isolated roots, see, e.g., \cite[Theorem 10.18]{rudin}.
	Recalling that exponential matrices are always invertible (see, e.g., \cite[Theorem 2.6.38]{horn_johnson_1991}), we obtain that there exists $t_1>0$ such that $e^{-t_1(A_c)}\widetilde{A}e^{(t_1-\widehat{t})A_c}\vpmb\neq0$.
	By defining $\wpmb:=\delta\ypmb_\circ(\widetilde{A},\widehat{\epsilon};\widehat{t})$ and $\gpmb(t):=\int_{\widehat{t}}^{t}e^{-s(A_c)}\widetilde{A}e^{(s-\widehat{t})A_c}\vpmb ds+e^{-\widehat{t}A_c}\wpmb$, we observe that $\frac{d\gpmb(t_1)}{dt}=e^{-t_1(A_c)}\widetilde{A}e^{(t_1-\widehat{t})A_c}\vpmb\neq0$.
	Since $\frac{d\gpmb(t)}{dt}$ is analytic in $t$, the same holds for $\gpmb(t)$,\footnote{This follows directly from the fundamental theorem of calculus.} and since $\frac{d\gpmb(t_1)}{dt}\neq 0$ we obtain that $\gpmb(t)$ has only isolated roots.
	Notice that
	\begin{equation*}
	e^{-tA_c}\delta\ypmb(\widetilde{A},\widehat{\epsilon}_c;t) = e^{-tA_c}\int_{\widehat{t}}^{t}e^{(t-s)(A_c)}\widetilde{A}e^{(s-\widehat{t})A_c}\vpmb ds+e^{(t-\widehat{t})A_c}\wpmb 
	=\gpmb(t),
	\end{equation*}
	for $t>\widehat{t}$. Thus, it remains to show that there exists $T>\widehat{t}$ such that $Ce^{TA_c}\gpmb(T)\neq 0$.
	Assumption \ref{assum:Bilinear} guarantees that there exists $c\in\mbbr$ such that the observability matrix $\mathcal{O}_N(C,A_\circ+cB)$ has full rank.
	Hence, for any $\upmb\in\mbbr^N\setminus\{0\}$ there exists a $t_{\upmb}>\widehat{t}$ such that $Ce^{t_{\upmb}A_c}\upmb\neq0$.
	Since $t \mapsto Ce^{tA_c}\upmb$ is analytic in $t$, $Ce^{t_{\upmb}A_c}\upmb\neq0$ implies that it has only isolated roots.
	Thus, for $t>\widehat{t}$, $t\mapsto Ce^{tA_c}\gpmb(t)$ is the composition of two analytic functions which both have only isolated roots, and is therefore also analytic with isolated roots.
	Hence, there exists $T>\widehat{t}$ such that $C\delta\ypmb(\widetilde{A},\widehat{\epsilon}_c;T)=Ce^{TA_c}\gpmb(T)\neq 0$.
\end{proof}

Now, we can prove our main result, whose proof is the same as the one of Theorem \ref{thm:convergence_fully_obscont}, in which Lemma \ref{lem:nonzerosplitbilin}  has to be used instead of Lemma \ref{lem:positivitymaxproblems}.

\begin{theorem}[positive definiteness of the GN matrix $\widehat{W}_\circ$ (bilinear systems)]\label{thm:ConvergenceBilinear}
	Let $\alcirc\in\mbbr^K$ be such that the matrices $A(\alcirc),B\in\mathfrak{so}(N)$ and $C\in\mathbb{R}^{P \times N}$ satisfy Assumption \ref{assum:Bilinear}.
	For $K\leq N^2$, let $\mathcal{A}=\{A_1,\ldots,A_K\}\subset~\mathfrak{so}(N)$ be a set of linearly independent matrices such that $\Astar\in\spantxt\;\mathcal{A}$, and let $\{\epsilon^1,\ldots,\epsilon^{K} \}\subset E_{ad}$ be controls generated by Algorithm~\ref{algo:General}. 
	Then the GN matrix $\widehat{W}_\circ$, defined in \eqref{eq:Wwidehatcirc_ij}, is PD.
\end{theorem}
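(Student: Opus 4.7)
The plan is to mirror the inductive argument used in the proof of Theorem~\ref{thm:convergence_fully_obscont}, substituting Lemma~\ref{lem:nonzerosplitbilin} for Lemma~\ref{lem:positivitymaxproblems} at every step where positivity of the initialization/splitting objective is invoked. I would proceed by induction on $k \in \{1,\ldots,K\}$, showing the stronger statement that the leading submatrix $[\widehat{W}_\circ^{(k)}]_{[1:k,1:k]}$ is PD after $k$ iterations of Algorithm~\ref{algo:General}.

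For the base case $k=1$, I apply Lemma~\ref{lem:nonzerosplitbilin} with $\widetilde{A} = A_1 \neq 0$ to obtain $\epsilon^1 \in E_{ad}$ with $\|C\delta\ypmb_\circ(A_1,\epsilon^1;T)\|_2^2 > 0$; by the matrix reformulation in Lemma~\ref{lem:ALGinWGeneral}, this is exactly $[W_\circ(\epsilon^1)]_{1,1} > 0$. For the induction step, I assume $[\widehat{W}_\circ^{(k)}]_{[1:k,1:k]}$ is PD. Since every $[W_\circ(\epsilon^m)]_{[1:k+1,1:k+1]}$ is PSD by construction, the augmented matrix $[\widehat{W}_\circ^{(k)}]_{[1:k+1,1:k+1]}$ is PSD; if it is already PD, adding any further PSD contribution preserves PD. Otherwise its kernel is nontrivial, and the PD upper-left block makes the fitting problem \eqref{eq:FittingStepGeneral} uniquely solvable with solution $\bet^k$, so Lemma~\ref{lem: if A pos def then also next bigger A} identifies the kernel as $\spantxt\{\vpmb\}$ with $\vpmb := [(\bet^k)^\top, -1]^\top$. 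Linear independence of $\mathcal{A}$ guarantees that $A^{(k)}(\bet^k) - A_{k+1} \in \spantxt(\mathcal{A}) \setminus \{0\}$, so Lemma~\ref{lem:nonzerosplitbilin} yields $\epsilon^{k+1} \in E_{ad}$ with
\begin{equation*}
\langle \vpmb, [W_\circ(\epsilon^{k+1})]_{[1:k+1,1:k+1]} \vpmb \rangle = \|C\delta\ypmb_\circ(A^{(k)}(\bet^k) - A_{k+1}, \epsilon^{k+1}; T)\|_2^2 > 0,
\end{equation*}
which means $[W_\circ(\epsilon^{k+1})]_{[1:k+1,1:k+1]}$ is PD on the kernel direction. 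Adding it to $[\widehat{W}_\circ^{(k)}]_{[1:k+1,1:k+1]}$, which is PD on the orthogonal complement of that kernel, produces a PD matrix $[\widehat{W}_\circ^{(k+1)}]_{[1:k+1,1:k+1]}$, closing the induction.

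The genuine work has already been encapsulated in Lemma~\ref{lem:nonzerosplitbilin}, whose proof departs substantially from its linear counterpart: here controllability is nonlinear (Lie-algebraic, via Lemma~\ref{deflem:ContrBilinear}) and observability is only assumed for the linearization around a suitable constant control (Assumption~\ref{assum:Bilinear}). Once that lemma is available, its logical role in the induction is identical to that of Lemma~\ref{lem:positivitymaxproblems}, and I expect no real obstacle beyond a few bookkeeping points: verifying that $\vpmb \neq 0$ (immediate from its last entry being $-1$) and that $A^{(k)}(\bet^k) - A_{k+1}$ is a nonzero element of $\spantxt(\mathcal{A})$ (immediate from linear independence of $\{A_1,\ldots,A_{k+1}\}$). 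In effect, the proof reduces to a direct substitution into the skeleton of Theorem~\ref{thm:convergence_fully_obscont}.
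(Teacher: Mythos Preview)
Your proposal is correct and matches the paper's own proof exactly: the paper states that the argument is the same as that of Theorem~\ref{thm:convergence_fully_obscont}, with Lemma~\ref{lem:nonzerosplitbilin} used in place of Lemma~\ref{lem:positivitymaxproblems}. Your induction, the use of Lemma~\ref{lem: if A pos def then also next bigger A} to identify the kernel, and the invocation of Lemma~\ref{lem:nonzerosplitbilin} at the initialization and splitting steps are precisely what the paper intends.
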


\subsection{Positive definiteness of the GN matrix}\label{subsec:PosDefGaussNewtonBilinear}
As in section \ref{subsec:PosDefGaussNewtonLinear}, we show that if the GN matrix in $\alcirc$ is PD, then the same is true locally, for all iterates $\al_c$ of GN.
We start by writing the matrix $\widehat{W}(\al)$ as a function of $\al$:
\begin{equation}\label{eq:WijalBilinear}
[\widehat{W}(\al)]_{i,j}:=\sum_{m=1}^{K}\langle C\delta\ypmb(\al,A_i,\epsilon^m;T),C\delta\ypmb(\al,A_j,\epsilon^m;T)\rangle,\quad i,j\in\{1,\ldots,K\},
\end{equation}
where $\delta\ypmb(\al,\widehat{A},\epsilon;T)$ denotes the solution at time $T$ of
\begin{equation}\label{eq:LinearizedODEGeneral}
\begin{cases}
\dot{\delta\ypmb}(t)&=(A(\al)+\epsilon(t)B)\delta\ypmb(t)+\widehat{A}\ypmb(t), \quad \delta\ypmb(0)=0,\\
\dot{\ypmb}(t)&=(A(\al)+\epsilon(t)B)\ypmb(t), \quad \ypmb(0)=\ypmb^0.
\end{cases}
\end{equation}
Now, we want to prove the same positive definiteness result as in Lemma \ref{lem:ApproximationWhatalpha}.
\begin{lemma}[positive definiteness of $\widehat{W}_\circ$ (bilinear systems)]\label{lem:ApproxWhatalphaBilinear}
	Let $\widehat{W}_\circ$, defined in \eqref{eq:Wwidehatcirc_ij}, be PD and denote by $\sigma^\circ_K>0$ the smallest singular value of $\widehat{W}_\circ$. Then, there exists $\delta:=\delta(\sigma^\circ_K)>0$ such that for any $\al\in\mbbr^K$ with $\|\al-\alcirc\|_2<\delta$, the matrix $\widehat{W}(\al)$, defined as in \eqref{eq:WijalBilinear}, is also PD.
\end{lemma}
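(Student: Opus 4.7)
The plan is to mimic the structure of the proof of Lemma \ref{lem:ApproximationWhatalpha}, adapting it to the skew-symmetric bilinear setting. The two essential ingredients are (i) continuity of the map $\al \mapsto \widehat{W}(\al)$, and (ii) the rank stability result of Lemma \ref{lem:rankdisturbedmatrix}.

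First, I would establish continuity of $\widehat{W}(\al)$ in $\al$. From \eqref{eq:WijalBilinear} each entry is a finite sum of inner products of the form $\langle C\delta\ypmb(\al,A_i,\epsilon^m;T), C\delta\ypmb(\al,A_j,\epsilon^m;T)\rangle$, so it suffices to prove that $\al \mapsto \delta\ypmb(\al,\widehat{A},\epsilon;T)$ is continuous for each fixed $\widehat{A}\in\spantxt(\mathcal{A})$ and $\epsilon\in E_{ad}$. The coupled system \eqref{eq:LinearizedODEGeneral} has right-hand side linear in the state pair $(\ypmb,\delta\ypmb)$ and affine in $\al$ through the coefficient $A(\al)+\epsilon(t)B$. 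Standard continuous dependence of ODE solutions on parameters, via a Gronwall argument applied to the difference of solutions for two parameter values $\al$ and $\alcirc$, yields continuity (in fact local Lipschitzness) of $\al \mapsto (\ypmb_\al(\cdot),\delta\ypmb_\al(\cdot))$ as a map into $C([0,T];\mbbr^N)\times C([0,T];\mbbr^N)$, evaluated in particular at $t=T$. Hence each entry of $\widehat{W}(\al)$ is continuous in $\al$, and so is $\widehat{W}(\al)$ itself in the induced matrix norm.

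Once continuity is in hand, the remainder of the argument is identical to the linear case. Since $\widehat{W}_\circ=\widehat{W}(\alcirc)$ is PD by assumption, its smallest singular value $\sigma^\circ_K$ is positive. Continuity provides a $\delta=\delta(\sigma^\circ_K)>0$ such that $\|\widehat{W}(\al)-\widehat{W}_\circ\|_2<\sigma^\circ_K$ whenever $\|\al-\alcirc\|_2<\delta$. Setting $D=\widehat{W}_\circ$ and $\widehat{D}=\widehat{W}(\al)-\widehat{W}_\circ$ in Lemma \ref{lem:rankdisturbedmatrix} forces $\ranktxt(\widehat{W}(\al))\geq\ranktxt(\widehat{W}_\circ)=K$, so $\ranktxt(\widehat{W}(\al))=K$. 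Because the representation \eqref{eq:WijalBilinear} is manifestly of Gram type, $\widehat{W}(\al)$ is PSD by construction; combined with full rank this yields positive definiteness.

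The main obstacle compared to Lemma \ref{lem:ApproximationWhatalpha} is that here the unperturbed flow depends on $\al$ through the coefficient matrix of a bilinear system, so one cannot write the solution as a single matrix exponential $e^{tA(\al)}$ and invoke analyticity of the exponential map in $\al$. The Gronwall-based continuous dependence result for the nonautonomous linear system \eqref{eq:LinearizedODEGeneral} handles this cleanly: for $\al$ in a bounded neighborhood of $\alcirc$ the coefficient $A(\al)+\epsilon(t)B$ is uniformly bounded in the $L^1(0,T)$ norm for the fixed $\epsilon=\epsilon^m\in E_{ad}$, which suffices to control $\|\delta\ypmb(\al,\widehat{A},\epsilon;T)-\delta\ypmb(\alcirc,\widehat{A},\epsilon;T)\|_2$ by a constant multiple of $\|\al-\alcirc\|_2$. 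No further ingredients are needed.
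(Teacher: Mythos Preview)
Your proposal is correct and follows essentially the same route as the paper: reduce the argument to continuity of $\al\mapsto\delta\ypmb(\al,\widehat{A},\epsilon;T)$ and then repeat the rank-stability conclusion of Lemma~\ref{lem:ApproximationWhatalpha}. The only cosmetic difference is that you establish this continuity via a direct Gronwall argument, whereas the paper invokes a continuous-dependence result from \cite[Proposition~3.26]{LibroQuantum} (itself a consequence of the implicit function theorem); both routes are standard and equivalent here.
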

\begin{proof}
	Recalling the proof of Lemma \ref{lem:ApproximationWhatalpha}, it is sufficient to show that the solution $\delta\ypmb(\al,\widehat{A},\epsilon;T)$ of \eqref{eq:LinearizedODEGeneral} is continuous in $\al$.
	By \cite[Proposition 3.26]{LibroQuantum},\footnote{This result is a special case of the implicit function theorem; see, e.g., \cite[Theorem 3.4]{LibroQuantum}.} we obtain continuity of the map $\al\mapsto\ypmb(A(\al),\epsilon;T)$ and analogously the continuity of $\al\mapsto\delta\ypmb(\al,\widehat{A},\epsilon;T)$.
\end{proof}
Using the result from Lemma \ref{lem:ApproxWhatalphaBilinear}, we can directly prove our main result.
\begin{theorem}[convergence of GN (bilinear systems)]
	Let $\alcirc\in\mbbr^K$ be such that the matrices $A(\alcirc)$, $B$ and $C$ satisfy Assumption \ref{assum:Bilinear}, and let $(\epsilon^m)_{m=1}^K\subset E_{ad}$ be generated by Algorithm~\ref{algo:General}.
	Denote by $\widehat{\sigma}_K$ the smallest singular value of $\Wcirc$, defined in \eqref{eq:Wwidehatcirc_ij}.
	Then there exists $\delta=\delta(\widehat{\sigma}_K)>0$ such that, if $\alstar\in\mbbr^K$ satisfies $\|\alstar-\alcirc\|\leq\delta$, then GN for the solution \eqref{eq:IdentificationProblemBilinear}, initialized with $\alcirc$, converges to $\alstar$.
\end{theorem}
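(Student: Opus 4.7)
The structure is essentially parallel to the proof of Theorem \ref{thm:ConvergenceTrueSystem} in the linear case, so I plan to follow the same three-step template, replacing each ingredient by its bilinear analogue established earlier in section \ref{sec:BilinearReconstruction}.

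First, I invoke Theorem \ref{thm:ConvergenceBilinear}: under Assumption \ref{assum:Bilinear}, the controls $(\epsilon^m)_{m=1}^K$ produced by Algorithm~\ref{algo:General} make $\widehat{W}_\circ$ symmetric PD, so its smallest singular value $\widehat{\sigma}_K$ is strictly positive. Second, I apply Lemma \ref{lem:ApproxWhatalphaBilinear} to obtain a radius $\delta=\delta(\widehat{\sigma}_K)>0$ such that the GN matrix $\widehat{W}(\al)$ defined through \eqref{eq:WijalBilinear} remains PD on $\mathcal{B}_\delta^K(\alcirc)$. By the identification of $\widehat{W}(\al_c)$ as the Gauss--Newton matrix at an iterate $\al_c$ (see the discussion at the end of section \ref{sec:GeneralSetting}, which carries over verbatim to the bilinear setting because \eqref{eq:ODElinearizedBilinear} is just \eqref{eq:ODElinearized} for the right-hand side $f(A,\ypmb,\epsilon)=(A+\epsilon B)\ypmb$), this gives the positive definiteness of $\widehat{W}_c$ required by Lemma \ref{lem:gaussnewton}.

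Third, I verify the remaining hypotheses of Lemma \ref{lem:gaussnewton}. Since the data are noise-free, $R_m(\alstar)=C\ypmb(A(\alstar),\epsilon^m;T)-\vphipmb^\star_{data}(\epsilon^m)=0$ for every $m$, by definition \eqref{eq:laboratorydata}. For the Lipschitz continuous differentiability of $R_m$ near $\alstar$, I use the same smoothness argument as in the proof of Lemma \ref{lem:ApproxWhatalphaBilinear}: the map $\al\mapsto\ypmb(A(\al),\epsilon^m;T)$ is continuously differentiable by the implicit function theorem variant \cite[Proposition 3.26]{LibroQuantum}, and its derivative is itself continuous in $\al$, hence locally Lipschitz. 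Combining these three pieces, provided $\|\alstar-\alcirc\|\le\delta$, Lemma \ref{lem:gaussnewton} yields local (quadratic) convergence of GN started at $\alcirc$ to $\alstar$, as soon as $\delta$ is possibly further reduced to fit within the local neighborhood required by Lemma \ref{lem:gaussnewton}.

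The only non-routine point is ensuring that, along the GN iteration, the iterates $\al_c$ stay inside the neighborhood $\mathcal{B}_\delta^K(\alcirc)$ on which $\widehat{W}(\al_c)$ is PD, so that each GN step is well defined. This is handled by the standard proof of Lemma \ref{lem:gaussnewton} (see \cite[Theorem 2.4.1]{kelley}): shrinking $\delta$ if necessary, the contraction estimate keeps the whole trajectory inside that ball, so the PD property is preserved throughout. No bilinear-specific difficulty arises beyond what is already encapsulated in Theorem \ref{thm:ConvergenceBilinear} and Lemma \ref{lem:ApproxWhatalphaBilinear}.
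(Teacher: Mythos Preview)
Your proposal is correct and follows essentially the same approach as the paper: invoke Theorem~\ref{thm:ConvergenceBilinear} to get $\widehat{\sigma}_K>0$, use Lemma~\ref{lem:ApproxWhatalphaBilinear} for local positive definiteness of $\widehat{W}(\al)$, note Lipschitz continuous differentiability of $R_m$ via the same implicit-function-theorem argument, and conclude with Lemma~\ref{lem:gaussnewton}. In fact your write-up is more explicit than the paper's own proof, which compresses these steps into three sentences.
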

\begin{proof}
	Theorem \ref{thm:ConvergenceBilinear} guarantees that $\Wcirc$ is PD, meaning that $\widehat{\sigma}_K>0$.
	Analogously to the proof of Lemma \ref{lem:ApproxWhatalphaBilinear}, one can also show that the functions $R_m(\al)$, defined in \eqref{eq:rm}, are Lipschitz continuously differentiable in $\al$ for all $m\in\{1,\ldots,K\}$.
	Thus, the result follows by Lemma \ref{lem:ApproxWhatalphaBilinear}.
\end{proof}

\subsection{Local uniqueness of solutions}
Let us study the local properties of problem \eqref{eq:IdentificationProblemBilinear} around $\alstar$.
We use the same approach as in the linear case, and start by rewriting problem \eqref{eq:IdentificationProblemBilinear} in a matrix-vector form.
\begin{lemma}[online identification problem in matrix form (bilinear systems)]\label{lem:IdentInMatrixBilinear}
	Problem \eqref{eq:identificationproblem} is equivalent to
	\begin{equation*}
	\min_{\al\in\mbbr^K}
	\frac{1}{2}\langle\alstar-\al,\widetilde{W}\klam{\alstar,\al}(\alstar-\al)\rangle,
	\end{equation*}
	where $\widetilde{W}\klam{\alstar,\al}\in\mbbr^{K\times K}$ is defined as $\widetilde{W}\klam{\alstar,\al} = \sum_{m=1}^{K}W\klam{\alstar,\al,\epsilon^m}$ with
	\begin{equation*}
	[W\klam{\alstar,\al,\epsilon^m}]_{i,j}:=\langle C\delta\ypmb_m(\alstar,\al,A_j;T),C\delta\ypmb_m(\alstar,\al,A_j;T)\rangle,
	\end{equation*}
	for $i,j\in\{1,\ldots,K\}$ and where $C\delta\ypmb_m(\alstar,\al,A;T)$ is the solution at time $T$ of
	\begin{equation*}
	\begin{cases}
	\dot{\delta\ypmb}(t)&=(A(\alstar)+\epsilon^m(t)B)\delta\ypmb(t)+A\ypmb(t), \quad \delta\ypmb(0)=0,\\
	\dot{\ypmb}(t)&=(A(\al)+\epsilon^m(t)B)\ypmb(t), \quad \ypmb(0)=\ypmb^0.
	\end{cases}
	\end{equation*}
\end{lemma}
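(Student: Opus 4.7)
The strategy is to mimic the proof of Lemma~\ref{lem:identificationinmatrixform}, replacing the explicit use of the matrix exponential $e^{(T-s)A(\alstar)}$ by the propagator of the time-dependent linear ODE driven by $A(\alstar)+\epsilon^m(t)B$. The only structural difference between the linear and bilinear cases is that the drift now depends on $t$ through $\epsilon^m$, so the representation formula for the linearized state involves the fundamental solution $\Phi^m(t,s)$ of $\dot{z}=(A(\alstar)+\epsilon^m(t)B)z$, rather than a matrix exponential. Otherwise the computation is essentially identical.

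First, I would fix $m\in\{1,\ldots,K\}$ and set $\Delta\ypmb_m(t):=\ypmb(A(\alstar),\epsilon^m;t)-\ypmb(A(\al),\epsilon^m;t)$. Subtracting the two state equations and adding and subtracting $A(\alstar)\ypmb(A(\al),\epsilon^m;t)$ gives
\begin{equation*}
\dot{\Delta\ypmb}_m(t)=(A(\alstar)+\epsilon^m(t)B)\Delta\ypmb_m(t)+A(\alstar-\al)\,\ypmb(A(\al),\epsilon^m;t),\qquad \Delta\ypmb_m(0)=0,
\end{equation*}
which is exactly the linearized equation from the lemma's statement, with inhomogeneity $A(\alstar-\al)\ypmb(A(\al),\epsilon^m;\cdot)$. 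By the linearity of this equation in its inhomogeneity and the decomposition $A(\alstar-\al)=\sum_{j=1}^K(\al_{\star,j}-\al_j)A_j$, we obtain
\begin{equation*}
\Delta\ypmb_m(T)=\sum_{j=1}^K(\al_{\star,j}-\al_j)\,\delta\ypmb_m(\alstar,\al,A_j;T),
\end{equation*}
where each $\delta\ypmb_m(\alstar,\al,A_j;T)$ is precisely the quantity defined in the statement of Lemma~\ref{lem:IdentInMatrixBilinear}.

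Next, applying $C$ and expanding the squared norm yields
\begin{equation*}
\tfrac12\|C\Delta\ypmb_m(T)\|_2^2=\tfrac12\sum_{i,j=1}^K(\al_{\star,i}-\al_i)(\al_{\star,j}-\al_j)\langle C\delta\ypmb_m(\alstar,\al,A_i;T),C\delta\ypmb_m(\alstar,\al,A_j;T)\rangle,
\end{equation*}
which is exactly $\tfrac12\langle\alstar-\al,W(\alstar,\al,\epsilon^m)(\alstar-\al)\rangle$ with the matrix $W(\alstar,\al,\epsilon^m)$ from the statement. Summing over $m=1,\ldots,K$ and observing that by~\eqref{eq:laboratorydata} $\vphipmb^\star_{data}(\epsilon^m)=C\ypmb(\Astar,\epsilon^m;T)$, so that $\vphipmb^\star_{data}(\epsilon^m)-C\ypmb(A(\al),\epsilon^m;T)=C\Delta\ypmb_m(T)$, identifies the objective of~\eqref{eq:IdentificationProblemBilinear} with $\tfrac12\langle\alstar-\al,\widetilde W(\alstar,\al)(\alstar-\al)\rangle$, completing the proof.

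There is really no hard step: the only subtlety, compared with Lemma~\ref{lem:identificationinmatrixform}, is the use of the time-dependent propagator rather than the matrix exponential. Well-posedness and linearity of the linearized equation (used implicitly in the superposition principle above) are standard for skew-symmetric bilinear systems and follow for instance from \cite[Proposition 3.26]{LibroQuantum} quoted in the proof of Lemma~\ref{lem:ApproxWhatalphaBilinear}. The remainder is algebraic bookkeeping identical to the linear case.
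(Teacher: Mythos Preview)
Your proof is correct and follows essentially the same approach as the paper, which explicitly states that the argument is analogous to that of Lemma~\ref{lem:identificationinmatrixform} and omits the details. You have correctly identified and handled the only genuine modification, namely replacing the matrix exponential by the time-dependent propagator of $\dot z=(A(\alstar)+\epsilon^m(t)B)z$ when computing $\dot{\Delta\ypmb}_m$ and invoking superposition.
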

The proof of Lemma \ref{lem:IdentInMatrixBilinear} is analogous to the one of Lemma \ref{lem:identificationinmatrixform} and we omit it here for brevity.
Notice that the notations in \eqref{eq:WijalBilinear} and Lemma \ref{lem:IdentInMatrixBilinear} are related in the sense that $\widehat{W}(\al)=\widetilde{W}(\al,\al)$. 
Now, proceeding as in Section \ref{subsec:LocUniqSolLinear} and defining the set of all global solutions $\mathcal{S}_{global}:= \Big\{\al\in\mbbr^K : (\alstar-\al)\in\kertxt\; \widetilde{W}(\alstar,\al) \Big\}$, we obtain the same local uniqueness of the solution $\alstar$ to \eqref{eq:IdentificationProblemBilinear}, meaning that if $\widehat{W}(\alstar)=\widetilde{W}(\alstar,\alstar)$ is PD, the same holds for $\widetilde{W}(\alstar,\al)$ when $\al$ is close to $\alstar$.

\section{Towards general nonlinear GR algorithms}\label{sec:nonlinear}
The LGR algorithm introduced in the previous sections only considers the linearized system.
Thus it does not have access to the full (nonlinear) dynamics and can only capture the local characteristics of the considered system. 
Moreover, as we will show in section~\ref{sec:numerics}, the standard GR algorithm can outperform LGR when $\al_\circ$ is far from the solution. 
However, the analysis of LGR allows us to better understand the local behavior of GR and prove that locally it is capable to construct control functions that guarantee convergence of GN. 
This analysis is carried out in section~\ref{sec:nonlinear:analysis}. 
This is the first analysis of GR algorithms for nonlinear problems. 
While section~\ref{sec:nonlinear:analysis} focuses on GR, we also briefly discuss its optimized version called optimized GR (OGR), introduced in \cite{siampaper}, and propose a slight improvement of the original version.

\subsection{A local analysis for nonlinear GR algorithms}\label{sec:nonlinear:analysis}
This section is concerned with general nonlinear systems of the form $\dot{\ypmb}(t)=f(A(\al^0)+A(\delta\al_\star),\ypmb(t),\eps(t))$ with the goal of reconstructing $A(\delta\al_\star)=\Astar-A(\al^0)$.
Here, the shift of $\Astar$ is considered to perform a local analysis near $A(\al^0)$.
The goal is to prove convergence of GN for the controls generated by the GR Algorithm \ref{algo:GeneralNonlin} using a local analogy to Algorithm \ref{algo:General}.
\begin{algorithm}[t]
	\caption{Nonlinear Greedy Reconstruction Algorithm}
	\begin{algorithmic}[1]\label{algo:GeneralNonlin} 
	\begin{small}
		\REQUIRE A set of linearly independent operators $\mathcal{A}=\{A_1,\ldots,A_{K}\}$, an (initial) operator $A(\alcirc)\in\spantxt\;\mathcal{A}$ and a family of compact sets $\mathcal{K}_j \subset \mathbb{R}^j$, $j=1,\dots,K-1$.
		\STATE Compute the control $\eps^1$ by solving \begin{equation}\label{eq:InitGeneralNonlin}
		\max_{\eps\in E_{ad}} \norm{C\ypmb(A(\alcirc),\eps;T)-C\ypmb(A(\alcirc) + A_1,\eps;T)}_2^2.
		\end{equation}
		\FOR{ $k=1,\dots, K-1$ }
		\STATE \underline{Fitting step}: $A^{(k)}(\bet):=\sum_{j=1}^{k}\bet_jA_j$, find $\bet=(\bet^{k}_j)_{j=1,\dots,k}$ that solves 
		\begin{equation}\label{eq:FittingGeneralNonlin}
		\min_{\bet\in\mathcal{K}_k}\sum_{m=1}^{k}\norm{C\ypmb(A(\alcirc)+A^{(k)}(\bet),\eps^m;T)-C\ypmb(A(\alcirc)+A_{k+1},\eps^m;T)}_2^2.
		\end{equation}
		\STATE \underline{Splitting step}: Find $\eps^{k+1}$ that solves \begin{equation}\label{eq:SplittingGeneralNonlin}
		\max_{\eps\in E_{ad}}\norm{C\ypmb(A(\alcirc)+A^{(k)}(\bet^{k}),\eps;T)-C\ypmb(A(\alcirc)+ A_{k+1},\eps;T)}_2^2.
		\end{equation}
		\ENDFOR
	\end{small}
	\end{algorithmic}
\end{algorithm}
\setlength{\textfloatsep}{8pt}
Notice that there are a few differences between Algorithms \ref{algo:GeneralNonlin} and \ref{algo:General}.
To derive a local analogy between them, all operators from the set $\mathcal{A}$ are shifted by $A(\alcirc)$.
Additionally, the fitting step problem \eqref{eq:FittingGeneralNonlin} only minimizes over a compact set $\mathcal{K}_k\subset \mathbb{R}^k$.
However, this is not restrictive since the set $\mathcal{K}_k$ can be chosen arbitrarily large.
Finally, the initialization problem \eqref{eq:InitGeneralNonlin} is different from the initialization \eqref{eq:InitializationGeneral}.
This is due to results obtained in \cite{siampaper} which suggest that one should not simply maximize the state corresponding to the first element $A_1$ in the set, but rather maximize the difference to the state that is observed when no elements from $\mathcal{A}$ are considered.

We recall that, in order to obtain our main results for Algorithm~\ref{algo:GeneralNonlin}, it is sufficient to prove two points.
First, that the fitting step \eqref{eq:FittingGeneralNonlin} identifies the kernel of the submatrix $[\widehat{W}_\circ^{(k)}]_{[1:k+1,1:k+1]}$.
Second, that for the initialization and each splitting step in Algorithm \ref{algo:GeneralNonlin} there exists at least one control for which the corresponding cost function is strictly positive (making the submatrix $[\widehat{W}_\circ^{(k+1)}]_{[1:k+1,1:k+1]}$ PD).

To prove the fitting step result, we need some continuity properties of the argmin operator.
For this purpose, we introduce the following definition of hemi-continuous set-valued correspondences (see, e.g., \cite[Chapter VI,$\S$1]{nla.cat-vn740512}).
\begin{definition}[hemi-continuity]\label{def:UpperHemiContinuous}
	Let $X\subset\mbbr$ be an open interval.
	A set-valued correspondence $c:X\rightrightarrows \mbbr^k$ is called upper hemi-continuous (u.h.c.) if for each $x_0\in X$ and each open set $G\subset \mbbr^k$ with $c(x_0)\subset G$ there exists a neighborhood $U(x_0)$ such that $x\in U(x_0)\Rightarrow c(x)\subset G,$ and called lower hemi-continuous (l.h.c.) if for each $x_0\in X$ and each open set $G\subset \mbbr^k$ meeting $c(x_0)$ there exists a neighborhood $U(x_0)$ such that $x\in U(x_0)\Rightarrow c(x)\cap G\neq\emptyset.$
	Furthermore, $c:X\rightrightarrows \mbbr^k$  is called   hemi-continuous if it is u.h.c. and l.h.c.
\end{definition}
Using Definition \ref{def:UpperHemiContinuous}, we can recall the Berge maximum theorem \cite[Theorem 17.31]{aliprantis06}.
\begin{lemma}[Berge maximum theorem]\label{lem:BergeMaximumTheorem}
	Let $X\subset\mbbr$ be an open interval.
	Let $J:\mbbr^k\times X\rightarrow\mbbr$ be a continuous function and $\phi:X\rightrightarrows\mbbr^k$ be a hemi-continuous, set-valued correspondence such that $\phi(x)$ is nonempty and compact for any $x\in X$.
	Then the correspondence $c:X\rightrightarrows\mbbr^k$ defined by $c(x):=\argmin\limits_{z\in\phi(x)}J(z;x)$ is u.h.c.
\end{lemma}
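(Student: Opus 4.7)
The plan is to follow the standard three-step proof of Berge's maximum theorem, adapted to the argmin formulation. The ingredients to establish are: (i) $c(x)$ is nonempty and compact for every $x \in X$; (ii) the value function $v(x) := \min_{z \in \phi(x)} J(z;x)$ is continuous on $X$; and (iii) the argmin correspondence $c$ is u.h.c.\ in the sense of Definition \ref{def:UpperHemiContinuous}. Only (iii) is claimed in the statement, but (ii) is the natural intermediate result and actually feeds into the argument for (iii).

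Step (i) is immediate: for fixed $x$, Weierstrass applied to the continuous $J(\cdot,x)$ on the nonempty compact $\phi(x)$ yields a minimizer, and $c(x) = J(\cdot,x)^{-1}(\{v(x)\}) \cap \phi(x)$ is closed in a compact set, hence compact. For step (ii), upper semicontinuity of $v$ at $x_0$ uses lower hemi-continuity of $\phi$: given $x_n \to x_0$ and any $z_0 \in c(x_0)$, pick $z_n \in \phi(x_n)$ with $z_n \to z_0$; then $v(x_n) \le J(z_n, x_n) \to J(z_0, x_0) = v(x_0)$. For lower semicontinuity, I would take $z_n \in c(x_n)$; upper hemi-continuity together with compactness of $\phi(x_0)$ confines $\{z_n\}$ to a bounded subset of $\mathbb{R}^k$ for large $n$ and yields a subsequence $z_{n_k} \to z^{*} \in \phi(x_0)$, after which $\lim_k v(x_{n_k}) = J(z^{*}, x_0) \ge v(x_0)$.

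For step (iii), I would argue by contradiction. Suppose $c(x_0) \subset G$ for some open $G \subset \mathbb{R}^k$, yet there exist $x_n \to x_0$ and $z_n \in c(x_n)$ with $z_n \notin G$. The same compactness extraction as in (ii) produces a subsequence $z_{n_k} \to z^{*} \in \phi(x_0)$, and combining continuity of $J$ with the continuity of $v$ from (ii) gives $J(z^{*}, x_0) = \lim_k J(z_{n_k}, x_{n_k}) = \lim_k v(x_{n_k}) = v(x_0)$. Hence $z^{*} \in c(x_0) \subset G$, which contradicts $z^{*}$ being the limit of points in the closed complement $\mathbb{R}^k \setminus G$.

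The main obstacle, in my view, is the compactness-and-limit step used twice: extracting from a sequence $z_n \in \phi(x_n)$, $x_n \to x_0$, a convergent subsequence whose limit actually lies in $\phi(x_0)$. This rests on two consequences of the hypotheses not explicitly stated in Definition \ref{def:UpperHemiContinuous}: that a u.h.c.\ compact-valued correspondence sends a neighborhood of $x_0$ into a bounded subset of $\mathbb{R}^k$, and that its graph is closed at $x_0$. Both follow by contradiction from u.h.c.\ applied to suitable open neighborhoods of $\phi(x_0)$; once these two facts are verified, the remainder of the proof is a routine limit passage in continuous functions.
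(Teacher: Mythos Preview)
Your proof is correct and follows the standard textbook argument for Berge's maximum theorem. The paper, however, does not prove this lemma at all: it is merely \emph{recalled} from the literature with a citation to \cite[Theorem 17.31]{aliprantis06}, so there is no ``paper's own proof'' to compare against. What you have written is essentially the proof one finds in that reference (or in any standard treatment), including the correct identification of the key technical step, namely that a u.h.c.\ compact-valued correspondence is locally bounded and has closed graph at each point, which is what makes the subsequence extraction work. One minor presentational point: in your lower-semicontinuity argument for $v$ you extract a convergent subsequence and conclude $\lim_k v(x_{n_k}) \ge v(x_0)$, but to get $\liminf_n v(x_n) \ge v(x_0)$ you should first pass to a subsequence realizing the $\liminf$ and then extract; this is routine and does not affect the validity of the argument.
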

We will also need the following technical lemma.
\begin{lemma}[limit of set-valued correspondance]\label{lem:UHCSequenceConverges}
	Let $X\subset\mbbr$ be an open interval with $0\in X$, and $c:X\rightrightarrows\mbbr^k$ be a u.h.c. correspondence.
	If $c(0)=\{0\}$, then $\lim_{k\rightarrow\infty} c(x_k)=\{0\}$ for any sequence $\{x_k\}_{k=1}^\infty$ such that $\lim_{k\rightarrow\infty} x_k=0$.
\end{lemma}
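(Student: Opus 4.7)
The plan is to interpret the limit $\lim_{k\to\infty} c(x_k)=\{0\}$ in the natural sense for u.h.c. correspondences: for every open neighborhood $G$ of $0\in\mbbr^k$, eventually $c(x_k)\subset G$. Equivalently, for every $\varepsilon>0$ there exists $k_0\in\mbbn$ such that $c(x_k)\subset\mathcal{B}^k_\varepsilon(0)$ for all $k\geq k_0$. Once we prove this, then in particular any selection $z_k\in c(x_k)$ satisfies $z_k\to 0$ in $\mbbr^k$.

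The proof is a one‑line unpacking of the definition of upper hemi‑continuity. Fix an arbitrary open set $G\subset\mbbr^k$ with $0\in G$. Since $c(0)=\{0\}\subset G$ by assumption, Definition \ref{def:UpperHemiContinuous} provides a neighborhood $U(0)\subset X$ of $0$ such that $c(x)\subset G$ for every $x\in U(0)$. Because $x_k\to 0$ in $X$, there exists $k_0=k_0(G)\in\mbbn$ such that $x_k\in U(0)$ for all $k\geq k_0$, and hence $c(x_k)\subset G$ for all $k\geq k_0$. Specializing to $G=\mathcal{B}^k_\varepsilon(0)$ for an arbitrary $\varepsilon>0$ gives the desired convergence.

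I do not anticipate any real obstacle: the statement is essentially a reformulation of upper hemi‑continuity at the point $x=0$ under the specific hypothesis that the image $c(0)$ is the singleton $\{0\}$. The only subtle point worth mentioning in the write-up is to make explicit what is meant by $\lim_{k\to\infty}c(x_k)=\{0\}$, since the lemma is stated for \emph{set-valued} images; clarifying this (either through the Hausdorff metric, or via the convergence of every selection to $0$) is enough to make the reasoning rigorous and to justify its use later in the paper when applied to the argmin correspondence of the fitting step.
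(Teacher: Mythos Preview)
Your proposal is correct and follows essentially the same approach as the paper's own proof: both interpret the set-valued limit as eventual containment in every ball $\mathcal{B}^k_\varepsilon(0)$, invoke the definition of upper hemi-continuity with $G=\mathcal{B}^k_\varepsilon(0)$ to obtain a neighborhood $U(0)$ on which $c(x)\subset G$, and then use $x_k\to 0$ to place the tail of the sequence inside $U(0)$. Your additional remark clarifying the meaning of the set-valued limit is a welcome touch that the paper leaves implicit.
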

\begin{proof}
	Consider an arbitrary sequence $\{x_k\}_{k=1}^\infty$ with $\lim_{k\rightarrow\infty} x_k=0$, and let $c(0)=\{0\}$. 
	It is sufficient to show that for any $\epsilon>0$ there exists $n_\epsilon\in\mbbn$ such that for all $k\geq n_\epsilon$ we have $c(x_k)\subset \mathcal{B}^k_\epsilon(0)$.
	Let $\epsilon>0$ and define $G_\epsilon:=\mathcal{B}^k_\epsilon(0)$.
	Since $c(0)=\{0\}$ 
	and $c$ is u.h.c., there exists a neighborhood $U_\epsilon(0)\subset \mbbr$ such that $c(x)\subset G_\epsilon$ for any $x\in U_\epsilon(0)$.
	Since $U_\epsilon(0)$ is an open neighborhood of $0$, there exists $\xi_\epsilon>0$ such that $(-\xi_\epsilon,\xi_\epsilon)\subset U_\epsilon(0)$.
	Since $\lim_{k\rightarrow\infty} x_k=0$, there exists $n_\epsilon$ such that for all $k\geq n_\epsilon$ we have $x_k\in (-\xi_\epsilon,\xi_\epsilon)$ and hence $c(x_k)\subset\mathcal{B}^k_\epsilon(0)$.
\end{proof}
To use Lemmas \ref{lem:BergeMaximumTheorem} and \ref{lem:UHCSequenceConverges}, we make the following assumptions.
\begin{assumption}\label{assum:LocalFitting}
	Let $k\in\{1,\ldots,K-1\}$ and define,
	\begin{equation*}
	    J_k(\bet;A_{k+1}):=\sum_{m=1}^{k}\|C\ypmb(A(\alcirc)+A^{(k)}(\bet),\eps^m;T)-C\ypmb(A(\alcirc)+A_{k+1},\eps^m;T)\|_2^2.
	\end{equation*}
	\begin{itemize}
		\item If $\|A_{k+1}\|$ is small enough, then there exists a $\bet^k=\bet^k(A_{k+1})$ that solves \eqref{eq:FittingGeneralNonlin} with $J_k(\bet^k;A_{k+1})=0$.
		\item There exists $\nu>0$ such that $\mathcal{B}^k_\nu(0)\subset \mathcal{K}_k$ and $\argmin_{\bet\in\overline{\mathcal{B}^k_\nu(0)}}J_k(\bet;0)=\{0\}$.
	\end{itemize}
\end{assumption}


The first point in Assumption \ref{assum:LocalFitting} guarantees that locally near $A(\al_{\circ})$, for $\|A_{k+1}\|$ small enough, one can solve \eqref{eq:FittingGeneralNonlin} making the cost function zero, meaning that one can find a linear combination of the first $k$ elements for which the final state cannot be distinguished from the $k+1$-th element by any of the $k$ computed controls.
On the other hand, if the minimum function value is strictly positive, then there already exists a control in the set $(\eps_m)_{m=1}^k$ that discriminates (splits) these two states.

The second point in Assumption \ref{assum:LocalFitting} ensures that $\{0\}=\argmin _{\bet\in\overline{\mathcal{B}^k_\nu(0)}}J_k(\bet,0)$.
If this was not true, it would mean that, for any radius $\nu>0$, the ball $\mathcal{B}^k_\nu(0)$ would contain infinitely many $\bet\in\mbbr^k\setminus\{0\}$ satisfying $J_k(\bet,0)=0$. 
Hence, for an infinite number of linear combinations in the set $\{A_1,\ldots,A_k \}$, the corresponding states could not be distinguished by any of the previously selected controls.
However, this implies that at least one of the previous splitting steps was not successful, which contradicts what we assume to reach iteration $k$.

Now, we can show that the local nonlinear fitting step problem \eqref{eq:FittingGeneralNonlin} is able to identify the kernel of the submatrix $[\widehat{W}_\circ^{(k)}]_{[1:k+1,1:k+1]}$, if it exists.
\begin{theorem}[nonlinear GR fitting step problems]\label{thm:FittingNonlinearToLinearized}
	Let $k\in\{1,\ldots,K\}$ and let $\bet^k$ be a solution to \eqref{eq:FittingGeneralNonlin}.
	If $\|A_{k+1}\|$ is sufficiently small and Assumption \ref{assum:LocalFitting} holds, then $\bet^k$ also solves \eqref{eq:FittingStepGeneral} with $$\sum_{m=1}^{k}\|C\delta\ypmb_\circ(A^{(k)}(\bet^k),\eps^m;T)-C\delta\ypmb_\circ(A_{k+1},\eps^m;T)\|_2^2=0.$$
\end{theorem}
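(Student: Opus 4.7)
The plan is to combine a localization/continuity argument based on Lemmas~\ref{lem:BergeMaximumTheorem} and~\ref{lem:UHCSequenceConverges} with a Taylor expansion of the nonlinear state around the base point $A(\alcirc)$, whose first-order term is precisely the linearized state $\delta\ypmb_\circ$. At the induction step, I would assume that $[\widehat{W}_\circ^{(k)}]_{[1:k,1:k]}$ is PD, which makes the linearized fitting problem~\eqref{eq:FittingStepGeneral} strictly convex with a unique minimizer.

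First I would show that $\bet^k \to 0$ as $\|A_{k+1}\|\to 0$. Define the argmin correspondence $c:A_{k+1}\mapsto \argmin_{\bet\in\overline{\mathcal{B}^k_\nu(0)}} J_k(\bet;A_{k+1})$. Compactness of the feasible set together with continuity of $J_k$ (which follows from standard regularity of the flow of~\eqref{eq:ODE} with respect to the matrix parameter) allows Lemma~\ref{lem:BergeMaximumTheorem} to conclude that $c$ is upper hemi-continuous. The second item of Assumption~\ref{assum:LocalFitting} yields $c(0)=\{0\}$, so Lemma~\ref{lem:UHCSequenceConverges} provides the desired localization of $\bet^k$ near zero.

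Next, by the first item of Assumption~\ref{assum:LocalFitting}, for $\|A_{k+1}\|$ small enough the nonlinear cost vanishes at $\bet^k$, i.e.\ for all $m=1,\dots,k$
\[
C\ypmb\bigl(A(\alcirc)+A^{(k)}(\bet^k),\eps^m;T\bigr)=C\ypmb\bigl(A(\alcirc)+A_{k+1},\eps^m;T\bigr).
\]
Because $\delta\ypmb_\circ(\,\cdot\,,\eps;T)$ is precisely the Fr\'echet derivative at $Z=0$ of the map $Z\mapsto \ypmb(A(\alcirc)+Z,\eps;T)$ (a standard consequence of the definition of~\eqref{eq:ODElinearizedCirc}), expanding both sides to first order around $A(\alcirc)$ and exploiting the linearity of $\delta\ypmb_\circ$ in its first argument gives
\[
C\delta\ypmb_\circ\!\bigl(A^{(k)}(\bet^k)-A_{k+1},\eps^m;T\bigr) \;=\; o\bigl(\|\bet^k\|+\|A_{k+1}\|\bigr).
\]

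The delicate point, and the main obstacle, is to upgrade this asymptotic equality to the exact identity $\sum_m\|C\delta\ypmb_\circ(A^{(k)}(\bet^k)-A_{k+1},\eps^m;T)\|_2^2=0$ asserted by the theorem. My plan is a rescaling argument: writing $A_{k+1}=t\widehat{A}$ with $\widehat{A}$ fixed and $t\to 0^+$, one has $\bet^k(t\widehat{A})\to 0$ by Step~1; the induction hypothesis $[\widehat{W}_\circ^{(k)}]_{[1:k,1:k]}\succ 0$ forces the linearized fitting problem at direction $\widehat{A}$ to admit a unique minimizer $\widetilde{\bet}$, and a uniform bound on $\bet^k(t\widehat{A})/t$ together with another application of upper hemi-continuity to the \emph{linearized} argmin correspondence identifies $\lim_{t\to 0}\bet^k(t\widehat{A})/t=\widetilde{\bet}$. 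Dividing the Taylor identity above by $t$ and passing to the limit then yields $C\delta\ypmb_\circ(A^{(k)}(\widetilde{\bet})-\widehat{A},\eps^m;T)=0$ for all $m$, which is the claimed equality. Verifying the uniform boundedness of $\bet^k(t\widehat{A})/t$ is the technically hardest ingredient and would be handled by invoking the PD submatrix estimate to bound $\|\bet^k\|$ from above by a multiple of the observed first-order mismatch, which is itself $O(t)$.
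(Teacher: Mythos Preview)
Your localization step and Taylor expansion are essentially identical to the paper's proof: the paper also writes $A_{k+1}=\delta_k\widetilde{A}_{k+1}$ with $\|\widetilde{A}_{k+1}\|=1$, applies Lemma~\ref{lem:BergeMaximumTheorem} together with the second item of Assumption~\ref{assum:LocalFitting} and Lemma~\ref{lem:UHCSequenceConverges} to force $\bet^k\to0$, and then expands $g(\al):=C\ypmb(A(\al),\eps^m;T)$ to obtain
\[
0=C\delta\ypmb_\circ(A^{(k)}(\bet^k),\eps^m;T)-C\delta\ypmb_\circ(A_{k+1},\eps^m;T)+O(\|\bet^k\|_2^2)+O(\|A_{k+1}\|^2).
\]

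The divergence is entirely in the last step. The paper does \emph{not} invoke any induction hypothesis on $[\widehat{W}_\circ^{(k)}]_{[1:k,1:k]}$ and does not rescale; it argues directly by comparing orders: $O(\|\bet^k\|_2^2)$ is of strictly higher order than the linear map $\bet^k\mapsto C\delta\ypmb_\circ(A^{(k)}(\bet^k),\eps^m;T)$, and likewise $O(\delta_k^2)$ versus $C\delta\ypmb_\circ(\delta_k\widetilde{A}_{k+1},\eps^m;T)$, so the identity above can only persist as $\delta_k\to0$ if the first-order part is already zero for $\delta_k$ small enough.

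Your rescaling route is more careful in spirit, but as written it does not land on the theorem's conclusion. Dividing by $t$ and sending $t\to0$ gives the vanishing of $C\delta\ypmb_\circ(A^{(k)}(\widetilde{\bet})-\widehat{A},\eps^m;T)$ at the \emph{limit} $\widetilde{\bet}=\lim_{t\to0}\bet^k(t\widehat{A})/t$, whereas the theorem asserts vanishing at $\bet^k$ itself for a fixed small $\|A_{k+1}\|$. Convergence $\bet^k/t\to\widetilde{\bet}$ does not yield $\bet^k/t=\widetilde{\bet}$, and the strict convexity of the linearized fitting problem cannot close this gap, since $\bet^k$ minimizes the \emph{nonlinear} cost, not the linearized one. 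In addition, the positive-definiteness of $[\widehat{W}_\circ^{(k)}]_{[1:k,1:k]}$ that you use to control $\|\bet^k\|/t$ is not part of the theorem's hypotheses; the paper proves the statement without it.
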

\begin{proof}
    \comment{Let $A_{k+1} = \delta_k\widetilde{A}_{k+1}$, where $\delta_k = \| A_{k+1} \|$ and $\| \widetilde{A}_{k+1} \|=1$.}
    Now, define $\widehat{J}_k(\bet,\delta_k):=J_k(\bet,\delta_k \comment{\widetilde{A}_{k+1}})$.
    The first point of Assumption \ref{assum:LocalFitting} implies that there exists a $\widehat{\delta}_k>0$ such that for all $|\delta_k|<\widehat{\delta}_k$ we have $\widehat{J}_k(\bet,\delta_k)=0$.
	Thus, Lemma \ref{lem:BergeMaximumTheorem} guarantees that the correspondence $c_k: (-\widehat{\delta}_k,\widehat{\delta}_k) \rightrightarrows\mbbr^k,c_k(\delta_k)=\argmin_{\bet\in\mathcal{K}_k}\widehat{J}_k(\bet;\delta_k)$ is u.h.c.\footnote{Note that, in this setting, the correspondence $\phi:(-\widehat{\delta}_k,\widehat{\delta}_k)\rightrightarrows\mbbr^k$ mentioned in Lemma \ref{lem:BergeMaximumTheorem} is defined as $\phi(x)=\mathcal{K}_k$ for any $x\in(-\widehat{\delta}_k,\widehat{\delta}_k)$ with $\mathcal{K}_k$ compact, and is therefore hemi-continuous.}
	
	According to the second point of Assumption~\ref{assum:LocalFitting}, $c_k(0)=0$ is an isolated solution of \eqref{eq:FittingGeneralNonlin}.
	Hence, the upper hemi-continuity of $c_k$ guarantees that for $\delta_k\rightarrow0$ we have $\bet^k\rightarrow0$ for any corresponding solution $\bet^k=\bet^k(\delta_k)$ of \eqref{eq:FittingGeneralNonlin}.
	
	Now, let $m\in\{1,\ldots,k\}$.
	If $\widehat{J}_k(\bet^k;\delta_k)=0$, then 
	\begin{equation}\label{eq:Eq1}
	C\ypmb(A(\alcirc)+A^{(k)}(\bet^k),\eps^m;T)-C\ypmb(A(\alcirc)+\delta_k \comment{\widetilde{A}_{k+1}},\eps^m;T)=0.
	\end{equation}
	We define $g(\al):=C\ypmb(A(\al),\eps^m;T)$.
	Since $f(A,\ypmb,\eps)$ in \eqref{eq:ODE} is assumed to be differentiable with respect to $A$ and $\ypmb$, we obtain that the map $A\mapsto\ypmb(A,\eps;T)$ is differentiable with respect to $A$ by the implicit function theorem (see, e.g., \cite[Theorem 17.13-1]{Ciarlet}).
	Hence, $C\ypmb(A(\al),\eps;T)$ is also differentiable with respect to $\al$.
	By Taylor's theorem, we get $g(\alcirc+\vpmb)=g(\alcirc)+g'(\alcirc)(\vpmb)+O(\|\vpmb\|_2^2)$ for $\vpmb\in\mbbr^k$.
	Defining $\widehat{\bet^k}$ and $\widehat{\del}_k$ as $\widehat{\bet^k}:=[\bet^k,0,\cdots,0]^\top\in\mbbr^k$ and $\widehat{\del}_k:=[0,\cdots,0,\delta_k]^\top\in\mbbr^k$, we can rewrite \eqref{eq:Eq1} as 
	\begin{align*}
	0=g(\alcirc+\widehat{\bet^k})-g(\alcirc+\widehat{\del}_{k+1})&=g'(\alcirc)(\widehat{\bet^k})-g'(\alcirc)(\widehat{\del}_{k+1})+O(\|\widehat{\bet^k}\|_2^2)+O(|\delta_k|^2).
	\end{align*}
	Since $g'(\alcirc)(\widehat{\bet^k})=C\delta\ypmb_\circ(A^{(k)}(\bet^k),\eps^m;T)$ and $g'(\alcirc)(\widehat{\del}_{k+1})=C\delta\ypmb_\circ(\delta_k \comment{\widetilde{A}_{k+1}},\eps^m;T)$, we obtain
	\begin{equation}\label{eq:orders}
	0= C\delta\ypmb_\circ(A^{(k)}(\bet^k),\eps^m;T)-C\delta\ypmb_\circ(\delta_k \comment{\widetilde{A}_{k+1}},\eps^m;T)+O(\|\widehat{\bet^k}\|_2^2)+O(|\delta_k|^2).
	\end{equation}
	Since $\bet^k=\bet^k(\delta_k)\rightarrow0$ for $\delta_k\rightarrow0$, we know that all four terms vanish for $\delta_k\rightarrow0$.
	However, $O(|\delta_k|^2)$ converges faster than $C\delta\ypmb_\circ(\delta_k \comment{\widetilde{A}_{k+1}},\eps^m;T)$ and $O(\|\widehat{\bet^k}\|_2^2)$ faster than $C\delta\ypmb_\circ(A^{(k)}(\bet^k),\eps^m;T)$.
	Hence, \eqref{eq:orders} can only be true for $\delta_k\rightarrow0$ if \linebreak $C\delta\ypmb_\circ(A^{(k)}(\bet^k),\eps^m;T)-C\delta\ypmb_\circ(\delta_k \comment{\widetilde{A}_{k+1}},\eps^m;T)=0$ for $\delta_k$ small enough, which is equivalent to $C\delta\ypmb_\circ(A^{(k)}(\bet^k),\eps^m;T)-C\delta\ypmb_\circ(A_{k+1},\eps^m;T)=0$ for $\|A_{k+1}\|$ sufficiently small.
\end{proof}
\comment{
Let us now comment about the assumption on $\|A_{k+1}\|$.
The goal of this section is to prove a local result around the approximation $A(\al_\circ)$. 
Since the map $\al \mapsto A(\al)$ is linear, the terms $A(\al_\circ) + A^{(k)}(\bet^k)$ and $A(\al_\circ) + A_{k+1}$ are perturbations of $A(\al_\circ)$. 
These are clearly used in the proof of Theorem \ref{thm:FittingNonlinearToLinearized}.
In order to remain close to $A(\al_\circ)$, we require that the basis elements $A_{k+1}$ are sufficiently small. 
However, this is not a restrictive assumption since it can be interpreted as a simple rescaling of the operators $A_1,\dots,A_K$, which are only required to be linearly independent.
Thus, the sufficiently small assumption of $\| A_{k+1} \|$ has to be understood in the sense that one has to perturb $A(\al_\circ)$ by stepping in direction of the (linearly independent) element $A_{k+1}$. 
However, the length of the step must be sufficiently small to remain in a neighborhood of $A(\al_\circ)$ (related to the Taylor expansion used in the proof of Theorem \ref{thm:FittingNonlinearToLinearized}).
}

Regarding the initialization and splitting step result, we make now the assumption that there always exists a control that makes the corresponding cost function value strictly positive, and discuss specific cases where this assumption holds.

\begin{assumption}\label{assum:LocalSplitting}
	Let $k\in\{1,\ldots,K-1\}$ and $\bet^k\in\mbbr^k$ be the solution of \eqref{eq:FittingGeneralNonlin}. 
	There exists a solution $\eps^{k+1}\in E_{ad}$ to \eqref{eq:SplittingGeneralNonlin} that simultaneously satisfies
	\begin{equation}\label{eq:AssumSplittingIneqOne}
	    \|C\ypmb(A(\alcirc)+A^{(k)}(\bet^{k}),\eps^{k+1};T)-C\ypmb(A(\alcirc)+ A_{k+1},\eps^{k+1};T)\|_2^2>0,
	\end{equation}
	and
	\begin{equation}\label{eq:AssumSplittingIneqTwo}
	    \|C\delta\ypmb_\circ(A^{(k)}(\bet^k),\eps^{k+1};T)-C\delta\ypmb_\circ(A_{k+1},\eps^{k+1};T)\|_2^2>0.
	\end{equation}
	Let \eqref{eq:AssumSplittingIneqOne}-\eqref{eq:AssumSplittingIneqTwo} also hold for a solution $\eps^1\in E_{ad}$ to \eqref{eq:InitGeneralNonlin} with $k=0$ and $\bet^0=0$.
\end{assumption}
In Theorem \ref{thm:MainResult}, we will investigate Assumption \ref{assum:LocalSplitting} for the two settings considered in sections \ref{sec:LinearReconstruction} and \ref{sec:BilinearReconstruction}.
Now, we state a result relating Algorithms \ref{algo:General} and \ref{algo:GeneralNonlin}.
\begin{theorem}[\comment{positive definiteness of the GN matrix $\widehat{W}_\circ$ (general systems)}]\label{thm:MainTheorem}
    Consider the general setting of system \eqref{eq:ODE} with a set of linearly independent matrices $\{A_1,\ldots,A_K\}$ such that $\|A_k\|$ be sufficiently small for all $k\in\{1,\ldots,K\}$. 
    Let $(\eps^m)_{m=1}^K\subset E_{ad}$ be generated by Algorithm \ref{algo:GeneralNonlin} such that Assumption \ref{assum:LocalFitting} holds for all $k\in\{1,\ldots,K-1\}$ and $\eps^m$ satisfies Assumption \ref{assum:LocalSplitting} for all $m\in\{1,\ldots,K\}$. Then the GN matrix $\widehat{W}_\circ$, defined in \eqref{eq:Wwidehatcirc_ij}, is PD.
\end{theorem}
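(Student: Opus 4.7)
The plan is to mimic the inductive structure of the proof of Theorem \ref{thm:convergence_fully_obscont}, but transferring information from the nonlinear Algorithm \ref{algo:GeneralNonlin} to the linearized quantities that define $\widehat{W}_\circ$ via the bridge provided by Theorem \ref{thm:FittingNonlinearToLinearized}. More precisely, I will prove by induction on $k\in\{1,\dots,K\}$ that the submatrix $[\widehat{W}_\circ^{(k)}]_{[1:k,1:k]}$ is PD; the conclusion of the theorem then corresponds to $k=K$.

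For the base case, the initialization $\eps^1$ solves the nonlinear problem \eqref{eq:InitGeneralNonlin}, and Assumption \ref{assum:LocalSplitting} at $k=0$ (with $\bet^0 = 0$) guarantees that \eqref{eq:AssumSplittingIneqTwo} holds, namely $\|C\delta\ypmb_\circ(A_1,\eps^1;T)\|_2^2>0$. By \eqref{eq:Wwidehatcirc_ij}, this is precisely $[\widehat{W}_\circ^{(1)}]_{1,1}>0$.

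For the inductive step, assume $[\widehat{W}_\circ^{(k)}]_{[1:k,1:k]}$ is PD. Since every $[W_\circ(\eps^m)]_{[1:k+1,1:k+1]}$ is PSD by construction, $[\widehat{W}_\circ^{(k+1)}]_{[1:k+1,1:k+1]}$ is PD as soon as $[\widehat{W}_\circ^{(k)}]_{[1:k+1,1:k+1]}$ is PD (by monotonicity of PSD additions), so the only nontrivial case is when $[\widehat{W}_\circ^{(k)}]_{[1:k+1,1:k+1]}$ has a nontrivial kernel. In that case, Lemma \ref{lem: if A pos def then also next bigger A} yields that this kernel is one-dimensional and spanned by $\vpmb=[(\widetilde\bet)^\top,-1]^\top$, where $\widetilde\bet$ is the (unique) solution of the linearized fitting problem \eqref{eq:FittingStepGeneral}. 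The key observation is now Theorem \ref{thm:FittingNonlinearToLinearized}: under the smallness condition on $\|A_{k+1}\|$ and Assumption \ref{assum:LocalFitting}, any solution $\bet^k$ of the nonlinear fitting step \eqref{eq:FittingGeneralNonlin} is also a solution of the linearized fitting step with zero cost, hence $\bet^k = \widetilde\bet$ by uniqueness. Therefore the nonlinear fitting step identifies exactly the (one-dimensional) kernel of $[\widehat{W}_\circ^{(k)}]_{[1:k+1,1:k+1]}$.

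To conclude, I exploit the splitting step: Assumption \ref{assum:LocalSplitting} ensures that the control $\eps^{k+1}$ produced by Algorithm \ref{algo:GeneralNonlin} additionally satisfies the linearized inequality \eqref{eq:AssumSplittingIneqTwo}, which by the matrix reformulation of Lemma \ref{lem:ALGinWGeneral} is equivalent to $\langle\vpmb,[W_\circ(\eps^{k+1})]_{[1:k+1,1:k+1]}\vpmb\rangle>0$. Combining this with the positive semidefiniteness of $[W_\circ(\eps^{k+1})]_{[1:k+1,1:k+1]}$ on $\spantxt(\vpmb)^\perp\cap\kertxt([\widehat{W}_\circ^{(k)}]_{[1:k+1,1:k+1]})^{\perp}$ (where $[\widehat{W}_\circ^{(k)}]_{[1:k+1,1:k+1]}$ is already PD), the sum $[\widehat{W}_\circ^{(k+1)}]_{[1:k+1,1:k+1]}$ is PD, closing the induction. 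The main obstacle is the splicing of the two algorithms: the fitting step is a nonlinear optimization, whereas $\widehat{W}_\circ$ only records linearized information. Theorem \ref{thm:FittingNonlinearToLinearized} resolves this difficulty via a Taylor expansion combined with the upper hemi-continuity argument used therein, which is precisely why the smallness assumption on the $\|A_k\|$ is required.
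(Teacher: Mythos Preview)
Your proposal is correct and follows exactly the approach the paper indicates: it states that the proof is ``similar to that of Theorem \ref{thm:convergence_fully_obscont} and is omitted for brevity'', and you reproduce precisely that inductive argument, using Theorem \ref{thm:FittingNonlinearToLinearized} to transfer the nonlinear fitting solution $\bet^k$ to the linearized kernel direction and Assumption \ref{assum:LocalSplitting} (inequality \eqref{eq:AssumSplittingIneqTwo}) in place of Lemma \ref{lem:positivitymaxproblems}. The only minor remark is that, under the hypotheses, Theorem \ref{thm:FittingNonlinearToLinearized} actually forces the linearized fitting cost to vanish, so the ``trivial case'' where $[\widehat{W}_\circ^{(k)}]_{[1:k+1,1:k+1]}$ is already PD does not occur; your treatment of it is harmless but vacuous.
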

The proof of Theorem \ref{thm:MainTheorem} is similar to that of Theorem \ref{thm:convergence_fully_obscont} and is omitted for brevity.

\comment{In the following subsections \ref{sub:sub:1} and \ref{sub:sub:2} we discuss Assumption \ref{assum:LocalSplitting} for linear and bilinear systems and for a general class of nonliner systems, respectively.}

\subsubsection{\comment{Linear and bilinear control systems}}\label{sub:sub:1}
In this section, we discuss Assumption \ref{assum:LocalSplitting} in the settings considered in sections \ref{sec:LinearReconstruction} and \ref{sec:BilinearReconstruction}.
First, we require the following results (see, e.g., \cite[p. 1079]{Whittlesey}).
\begin{lemma}[on analytic functions in Banach spaces]\label{lem:AnalyticFunctionsSplitting}
	Let $X,Y$ denote real Banach spaces and $\mathcal{B}_r(x)\subset X$ the open ball with center $x\in X$ and radius $r>0$.
	For an open set $D\subset X$, let the functions $f,g:D\rightarrow Y$ be analytic.
	If there exist $x_f,x_g\in D$ such that $f(x_f)\neq0$ and $g(x_g)\neq0$, then for any $x\in D$ and any $r>0$ there exists a $\widetilde{x}\in \mathcal{B}_r(x)\subset D$ such that $f(\widetilde{x})\neq0$ and $g(\widetilde{x})\neq0$.
\end{lemma}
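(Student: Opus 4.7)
The plan is to invoke the identity theorem for analytic maps on connected open subsets of a Banach space, applied to the ball $\mathcal{B}_r(x)$, and then to finish with a Baire category argument. Throughout I take $D$ to be connected (or, equivalently, restrict attention to the connected component of $D$ containing both $x$ and $x_f$, $x_g$), which is implicit in the intended application.

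First, I will focus on the open ball $\mathcal{B}_r(x)\subset D$. Being a convex subset of the Banach space $X$, it is connected. I would argue that $f\not\equiv 0$ on $\mathcal{B}_r(x)$: otherwise, since $f$ is analytic on the connected open set $D$, the identity theorem for Banach-space-valued analytic functions (vanishing of an analytic map on a nonempty open subset of a connected open domain forces vanishing on the whole domain) would imply $f\equiv 0$ on $D$, contradicting $f(x_f)\neq 0$. The same argument gives $g\not\equiv 0$ on $\mathcal{B}_r(x)$.

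Second, a further application of the identity theorem inside the connected open set $\mathcal{B}_r(x)$ shows that the zero set
\begin{equation*}
Z_f:=\{y\in \mathcal{B}_r(x)\,:\,f(y)=0\}
\end{equation*}
has empty interior, because otherwise $f$ would vanish identically on $\mathcal{B}_r(x)$. Hence $U_f:=\mathcal{B}_r(x)\setminus Z_f$ is open and dense in $\mathcal{B}_r(x)$. The same reasoning produces an open dense subset $U_g\subset \mathcal{B}_r(x)$ on which $g$ does not vanish.

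Third, I would conclude by a Baire category argument: the ball $\mathcal{B}_r(x)$, being an open subset of the complete metric space $X$, is a Baire space, so the intersection $U_f\cap U_g$ of two open dense subsets is dense in $\mathcal{B}_r(x)$ and, in particular, nonempty. Any point $\widetilde{x}\in U_f\cap U_g$ then satisfies simultaneously $f(\widetilde{x})\neq 0$ and $g(\widetilde{x})\neq 0$, which is the desired conclusion. The only delicate point I expect is the precise statement of the identity theorem in the Banach-space-valued setting; this is standard (see, e.g., the reference cited in the paper) and is the sole tool needed to pass from the assumption of non-vanishing at a single point $x_f$ (resp.\ $x_g$) of $D$ to non-vanishing on a dense open subset of the arbitrary ball $\mathcal{B}_r(x)$.
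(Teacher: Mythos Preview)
The paper does not supply its own proof of this lemma; it is stated as a known fact with a reference to \cite[p.~1079]{Whittlesey}. Your argument is correct and is precisely the standard one: the identity theorem for analytic maps between Banach spaces forces the zero sets of $f$ and $g$ to have empty interior in the connected ball $\mathcal{B}_r(x)$, and then any point in the intersection of the two dense open complements does the job. You are also right to flag that connectedness of $D$ (or at least that $x$, $x_f$, and $x_g$ lie in the same component) is needed for the identity-theorem step and is implicit in the intended application; without it the statement as written is false.

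One small remark: the Baire category step is more than you need. Once you know $U_f$ is dense in $\mathcal{B}_r(x)$ and $U_g$ is open and nonempty, density alone gives $U_f\cap U_g\neq\emptyset$; no completeness or Baire property is required for the intersection of finitely many open dense sets.
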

We also require the following result about the analycity of control-to-state maps, which follows directly from the implicit function theorem (see, e.g., \cite[p. 1081]{Whittlesey}).
\begin{lemma}[analycity of control-to-state maps]\label{lem:AnalyticControlToStateMaps}
	Consider system \eqref{eq:ODE} and define the map $c:U\times Y\rightarrow Z$ as  $c(\eps,\ypmb):=[\dot{\ypmb}-f(A,\ypmb,\eps),\ypmb(0)-\ypmb^0]$, where $U$ is the Hilbert space of control functions, $Y$ is the (Banach) space where solutions to \eqref{eq:ODE} lie and $Z$ is a Banach space.
	If $c$ is analytic in $\eps$ and $\ypmb$, \eqref{eq:ODE} has a unique solution $\ypmb=\ypmb(\eps)\in Y$ such that $c(\ypmb(\eps),\eps)=0$ for each $\eps\in E_{ad}\subset U$ and the linearized state equation $\dot{\delta\ypmb}=\delta_{\ypmb}f(A,\ypmb(\eps),\eps)(\delta\ypmb) - \varphi$ with $\delta\ypmb(0)=\varphi^0$ is uniquely solvable for any $[\varphi,\varphi^0]\in Z$, then the control-to-state map $L:E_{ad}\rightarrow Y,\eps\mapsto \ypmb(\eps)$ is analytic.
	If the solution space $Y$ is such that the evaluation map $S_T:Y\rightarrow\mbbr^N, \ypmb\mapsto\ypmb(T)$ is linear and continuous, then also the map $S:E_{ad}\rightarrow\mbbr^N,\eps\mapsto(\ypmb(\eps))(T)$ is analytic.
\end{lemma}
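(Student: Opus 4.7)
The plan is to view \eqref{eq:ODE} as an abstract constraint $c(\eps,\ypmb)=0$ in $U\times Y$ and to invoke the analytic implicit function theorem in Banach spaces. First, I would fix an arbitrary $\eps_0\in E_{ad}$ and denote by $\ypmb_0:=\ypmb(\eps_0)\in Y$ its associated unique solution, which exists by the second hypothesis and satisfies $c(\eps_0,\ypmb_0)=0$. The first hypothesis already grants analyticity of $c$ in a neighborhood of $(\eps_0,\ypmb_0)$, so the only point requiring work is to show that the partial Fréchet derivative $\partial_{\ypmb}c(\eps_0,\ypmb_0):Y\rightarrow Z$ is a bounded linear bijection.

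Next, I would compute this derivative explicitly. Since $c(\eps,\ypmb)=[\dot{\ypmb}-f(A,\ypmb,\eps),\;\ypmb(0)-\ypmb^0]$, formal differentiation gives
\[
\partial_{\ypmb}c(\eps_0,\ypmb_0)(\delta\ypmb)=\bigl[\dot{\delta\ypmb}-\partial_{\ypmb}f(A,\ypmb_0,\eps_0)(\delta\ypmb),\;\delta\ypmb(0)\bigr].
\]
Consequently, solving $\partial_{\ypmb}c(\eps_0,\ypmb_0)(\delta\ypmb)=[\varphi,\varphi^0]$ for a given $[\varphi,\varphi^0]\in Z$ amounts precisely to the linearized state equation of the statement, which by the third hypothesis admits a unique solution $\delta\ypmb\in Y$ for every right-hand side. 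Thus $\partial_{\ypmb}c(\eps_0,\ypmb_0)$ is a bounded bijection between Banach spaces, and the bounded inverse theorem gives continuity of its inverse.

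The analytic implicit function theorem then delivers a neighborhood $\mathcal{V}\subset U$ of $\eps_0$ and an analytic map $\widetilde{L}:\mathcal{V}\rightarrow Y$ with $c(\eps,\widetilde{L}(\eps))=0$ and $\widetilde{L}(\eps_0)=\ypmb_0$. The uniqueness part of the second hypothesis forces $\widetilde{L}(\eps)=L(\eps)$ on $\mathcal{V}\cap E_{ad}$, so $L$ is analytic at $\eps_0$; since $\eps_0$ was arbitrary, $L$ is analytic on all of $E_{ad}$. For the last claim, every bounded linear operator between Banach spaces is analytic (its Taylor series reduces to itself), so under the additional assumption that $S_T$ is linear and continuous, $S=S_T\circ L$ is analytic as a composition of analytic maps.

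The main obstacle is not conceptual but rather the precise identification of $\partial_{\ypmb}c(\eps_0,\ypmb_0)$ with the operator appearing in the linearized state equation of the statement, so that the third hypothesis reads exactly as the bijectivity hypothesis of the analytic implicit function theorem. Once this matching is carefully carried out, the result follows from a textbook application of that theorem.
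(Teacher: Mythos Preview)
Your proposal is correct and follows essentially the same route as the paper: both verify that $\partial_{\ypmb}c(\eps_0,\ypmb_0)$ is a linear isomorphism by identifying it with the linearized state equation and then invoke the analytic implicit function theorem. The only cosmetic difference is in the last claim: you argue that $S_T$ is analytic (being bounded linear) and conclude by composition of analytic maps, whereas the paper pushes $S_T$ through the Taylor series of $L$ term by term---your argument is the cleaner of the two and amounts to the same thing.
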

\begin{proof}
    First, we prove that the control-to-state map $L:E_{ad}\rightarrow Y,\eps\mapsto \ypmb(\eps)$ is analytic.
    This follows directly from the implicit function theorem \cite[p. 1081]{Whittlesey} if we can show that the map $D_{\ypmb}c(\eps,\ypmb)$ is an isomorphism of $Y$ on $Z$ for any pair $(\widetilde{\eps},\widetilde{\ypmb})\subset U\times Y$ such that $\widetilde{\ypmb}$ is the unique solution to \eqref{eq:ODE} for $\widetilde{\eps}$, i.e. $c(\widetilde{\eps},\widetilde{\ypmb})=0$.
    Since the equation for the derivative $D_{\ypmb}c(\widetilde{\eps},\widetilde{\ypmb})(\delta\ypmb)=\varphi$, which is equivalent to $\dot{\delta\ypmb}=\delta_{\ypmb}f(A,\widetilde{\ypmb},\widetilde{\eps})(\delta\ypmb)-\varphi$ with $\delta\ypmb(0)=\varphi^0$, admits a unique solution $\delta\ypmb\in Y$ for any $[\varphi,\varphi^0]\in Z$, $D_{\ypmb}c(\widetilde{\eps},\widetilde{\ypmb})$ is bijective and therefore an isomorphism of $Y$ on $Z$.
    
    It remains to show that also the map $S:E_{ad}\rightarrow\mbbr^N,\eps\mapsto(\ypmb(\eps))(T)$ is analytic.
    Consider an arbitrary $\eps_0\in E_{ad}$.
    Since the control-to-state map $L$ is analytic, there exist (by definition, see, e.g., \cite[p. 1078]{Whittlesey}) $\ell$-linear, symmetric and continuous maps $a_\ell:(E_{ad})^\ell\rightarrow\mbbr^N, (\eps_1,\ldots,\eps_\ell)\mapsto a_\ell(\eps_1,\ldots,\eps_\ell)$ such that $\ypmb(\eps)=\sum_{\ell=0}^{\infty}a_\ell(\eps-\eps_0)^\ell$.
    Now, define the maps $b_\ell:(E_{ad})^\ell\rightarrow\mbbr^N$ as $b_\ell(\eps)^\ell:=(a_\ell(\eps)^\ell)(T)$, meaning that $\sum_{\ell=0}^{\infty}b_\ell(\eps-\eps_0)^\ell=(\ypmb(\eps))(T)$.
    Since the evaluation map $S_T:Y\rightarrow\mbbr^N, \ypmb\mapsto\ypmb(T)$ is linear and continuous, the maps $b_\ell$ are $\ell$-linear, symmetric and continuous.
    Thus, the map $S:E_{ad}\rightarrow\mbbr^N,\eps\mapsto(\ypmb(\eps))(T)=\sum_{\ell=0}^{\infty}b_\ell(\eps-\eps_0)^\ell$ is analytic by definition.
\end{proof}
In our case, we consider $U=L^2(0,T;\mbbr^M)$ in the linear and $U=L^2(0,T;\mbbr)$ in the bilinear setting, $Y=H^1(0,T;\mbbr^N)$ and $Z=L^2(0,T;\mbbr^N) \times\mbbr^N$. Then, the assumptions in Lemma \ref{lem:AnalyticControlToStateMaps} on the ODE system and its linearization are satisfied for \eqref{eq:ODElinear} and \eqref{eq:ODElinearlinearized} in the linear setting, and for \eqref{eq:ODEbilinear} and \eqref{eq:ODElinearizedBilinear} in the bilinear setting.\footnote{Existence and uniqueness of all solutions $\ypmb,\delta\ypmb$ follow by Carathéodory's existence theorem (see, e.g., \cite[Theorem 54]{Sontag1998} and related propositions). For $\eps\in L^2(0,T;\mbbr^M)$ in the linear and $\epsilon\in L^2(0,T;\mbbr)$ in the bilinear setting, we obtain $\dot{\ypmb},\dot{\delta\ypmb}\in L^2(0,T;\mbbr^N)$ and thus $\ypmb,\delta\ypmb\in H^1(0,T;\mbbr^N)$.}
Notice that all solutions lie in $H^1(0,T;\mbbr^N)\Subset C(0,T;\mbbr^N)$ (see, e.g., \cite{Ciarlet}), which implies that the evolution map $S_T:H^1(0,T;\mbbr^N)\rightarrow\mbbr^N, \ypmb\mapsto\ypmb(T)$ is also linear and continuous.
Now, we can prove our main result.

\begin{theorem}[analysis for linear and bilinear systems]\label{thm:MainResult}
	Consider the linear setting \eqref{eq:ODElinear} or the bilinear setting \eqref{eq:ODEbilinear}.
    Assume that the systems are sufficiently observable and controllable, i.e. fully observable and controllable in the linear case, and satisfying Assumption \ref{assum:Bilinear} in the bilinear case.
	If $\|A_{k+1}\|$ is sufficiently small, then the set of controls in $E_{ad}$ that satisfy \eqref{eq:AssumSplittingIneqOne}-\eqref{eq:AssumSplittingIneqTwo} in Assumption \ref{assum:LocalSplitting} is nonempty.
\end{theorem}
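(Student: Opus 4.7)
The plan is to exhibit a single control $\widehat{\eps} \in E_{ad}$ satisfying both \eqref{eq:AssumSplittingIneqOne} and \eqref{eq:AssumSplittingIneqTwo} for $\|A_{k+1}\|$ sufficiently small (the initialization case $k=0$ is handled identically with the convention $\bet^0=0$).

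First I would treat the linearized cost $g(\eps) := \|C\delta\ypmb_\circ(\widetilde{A}, \eps; T)\|_2^2$, where $\widetilde{A} := A^{(k)}(\bet^k) - A_{k+1}$. Note that $\widetilde{A} \neq 0$ since $\{A_1, \ldots, A_{k+1}\}$ is linearly independent and $[(\bet^k)^\top, -1]^\top \neq 0$. Lemma \ref{lem:positivitymaxproblems} in the linear case, or Lemma \ref{lem:nonzerosplitbilin} in the bilinear case, then yields some $\widetilde{\eps} \in E_{ad}$ with $g(\widetilde{\eps}) > 0$. By Lemma \ref{lem:AnalyticControlToStateMaps} the control-to-state map extends analytically to all of $L^2$, so $g$ is analytic on the connected Banach space $L^2$. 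The identity theorem for analytic functions then forces $\{g > 0\}$ to be dense and open in $L^2$. Since $E_{ad}$ contains an open ball $B$ around its interior point ($\eps \equiv 0$ in the linear case; $\eps \equiv c$ in the bilinear case, for the constant $c$ of Assumption \ref{assum:Bilinear}), I can pick $\widehat{\eps} \in B$ with $g(\widehat{\eps}) > 0$, which is precisely \eqref{eq:AssumSplittingIneqTwo}.

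Next I would show that this same $\widehat{\eps}$ also satisfies \eqref{eq:AssumSplittingIneqOne} via a Taylor expansion in the spirit of the proof of Theorem \ref{thm:FittingNonlinearToLinearized}. Write $A_{k+1} = \delta \bar{A}_{k+1}$ with $\delta := \|A_{k+1}\|$ and $\|\bar{A}_{k+1}\| = 1$. Thanks to the inductive positive definiteness of $[\widehat{W}_\circ^{(k)}]_{[1:k,1:k]}$, the linearized fitting step \eqref{eq:FittingStepGeneral} is a strictly convex quadratic in $\bet$ with right-hand side linear in $\delta$; its unique minimizer therefore has the form $\bet^k_{\mathrm{lin}} = \delta \bar{\bet}$, with $\bar{\bet}$ independent of $\delta$. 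Theorem \ref{thm:FittingNonlinearToLinearized} guarantees that for $\delta$ small the nonlinear fitting solution coincides with $\bet^k_{\mathrm{lin}}$, so $\widetilde{A} = \delta \bar{M}$ for some $\bar{M}$ independent of $\delta$, and $\sqrt{g(\widehat{\eps})} = \delta\, c_0$ with $c_0 := \|C\delta\ypmb_\circ(\bar{M}, \widehat{\eps}; T)\|_2 > 0$. Taylor-expanding $A \mapsto C\ypmb(A(\alcirc) + A, \widehat{\eps}; T)$ around $A = 0$, the vector inside the squared norm of \eqref{eq:AssumSplittingIneqOne} equals $C\delta\ypmb_\circ(\widetilde{A}, \widehat{\eps}; T) + O(\|\bet^k\|^2 + \delta^2) = C\delta\ypmb_\circ(\widetilde{A}, \widehat{\eps}; T) + O(\delta^2)$. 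Taking Euclidean norms, its magnitude is at least $\delta\, c_0 - O(\delta^2)$, which is strictly positive for $\delta$ small enough, yielding \eqref{eq:AssumSplittingIneqOne}.

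The main obstacle is the rate comparison in the Taylor step: both $g(\widehat{\eps})$ and the Taylor remainder tend to zero as $\delta \to 0$, and one must certify that the linearized part (of order $\delta$) strictly dominates the remainder (of order $\delta^2$). This hinges on the explicit linear scaling $\bet^k = O(\delta)$, which in turn relies on the uniqueness of the linearized fitting solution --- a property provided inductively by the positive-definiteness machinery developed in sections \ref{subsec:convanalysisfullyobscont} and \ref{subsec:PosDefGaussNewtonBilinear}.
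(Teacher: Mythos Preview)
Your argument is correct and takes a genuinely different route from the paper. The paper establishes the two inequalities by separate existence arguments: for \eqref{eq:AssumSplittingIneqOne} it first transfers full observability and controllability to the perturbed systems $(A(\alcirc)+A^{(k)}(\bet^k),B,C)$ and $(A(\alcirc)+A_{k+1},B,C)$ via rank stability (Lemma~\ref{lem:rankdisturbedmatrix}), and then carries out an explicit construction---in the spirit of Lemma~\ref{lem:positivitymaxproblems}---to produce a control making the nonlinear difference nonzero; for \eqref{eq:AssumSplittingIneqTwo} it simply quotes Lemma~\ref{lem:positivitymaxproblems} or~\ref{lem:nonzerosplitbilin}. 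With two possibly different controls in hand, it then invokes Lemma~\ref{lem:AnalyticFunctionsSplitting} to extract a single control satisfying both. You instead derive \eqref{eq:AssumSplittingIneqOne} directly from \eqref{eq:AssumSplittingIneqTwo} via a Taylor expansion, bypassing both the explicit nonlinear construction and the merging step. This is more economical, but it leans on one ingredient not stated among the hypotheses of Theorem~\ref{thm:MainResult}: the positive definiteness of $[\widehat{W}_\circ^{(k)}]_{[1:k,1:k]}$, which you need in order to force $\bet^k=\delta\bar{\bet}$ exactly (so that $\widetilde{A}/\delta$ is fixed and the first-order term strictly dominates the $O(\delta^2)$ remainder). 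The paper's proof uses only $\bet^k\to 0$, a weaker requirement obtained from the hemi-continuity part of the proof of Theorem~\ref{thm:FittingNonlinearToLinearized}. Since the PD property is precisely the inductive hypothesis in Theorem~\ref{thm:MainTheorem}, your argument is valid in the only context where the result is applied; it just does not quite prove the standalone statement as written.
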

\begin{proof}
	For brevity, we denote $A_{\bet}:=A(\alcirc)+A^{(k)}(\bet^{k})$, $A_+:=A(\alcirc)+A_{k+1}$, $\ybet(\eps;t):=\ypmb(A_{\bet},\eps;t)$ and $\yplus(\eps;t):=\ypmb(A_+,\eps;t)$.
	
	We start with the linear setting \eqref{eq:ODElinear} from section \ref{sec:LinearReconstruction}.
	First, we derive observability and controllability properties for the systems $(A_+,B,C)$ and $(A_{\bet},B,C)$.
	Denote by $\sigma_k>0$ the smallest singular value of $\mathcal{O}_N(C,A(\alcirc))$.
	Let $k\in\{1,\ldots,K\}$ and $\bet^k\in\mbbr^k$ be the solution of \eqref{eq:FittingGeneralNonlin} for $\| A_{k+1}\|>0$ sufficiently small such that $\|\mathcal{O}_N(C,A(\alcirc))-\mathcal{O}_N(C,A_+)\|_2<\sigma_k$.
	From the proof of Theorem \ref{thm:FittingNonlinearToLinearized}, we obtain that also $\bet^k$ can be assumed to be sufficiently small such that $\|\mathcal{O}_N(C,A(\alcirc))-\mathcal{O}_N(C,A_{\bet})\|_2<\sigma_k$.
	Now, Lemma \ref{lem:rankdisturbedmatrix} guarantees that $\ranktxt(\mathcal{O}_N(C,A_+))=\ranktxt(\mathcal{O}_N(C,A_{\bet}))=N$.
	Using the same argument for the rank of the controllability matrices, we obtain that the systems $(A_+,B,C)$ and $(A_{\bet},B,C)$ are fully observable and controllable.
	
	Next, we consider the state of the difference $\zpmb(t)=\ypmb(A_+,\eps;t)-\ypmb(A_{\bet},\eps;t)$ with $\dot{\zpmb}=A_+\zpmb+(A_+-A_{\bet})\ypmb(A_{\bet},\eps;t)$.
	Since $A_+\neq A_{\bet}$, there exists $\vpmb\in\mbbr^N$ such that $(A_+-A_{\bet})\vpmb\neq0$.
	Recalling that $(A_{\bet},B)$ is controllable, we can find  $\eps_{t_1}$ for any $t_1\in(0,T]$ such that $\ybet(\eps_{t_1};)=\vpmb$ and therefore $(A_+-A_{\bet})\ybet(\eps_{t_1};t_1)\neq0$.
	We define
	\begin{equation*}
	\widetilde{\eps}(s):=\begin{cases}
	\eps_{t_1}(s),& \textnormal{for }0\leq s< t_1,\\
	\cpmb,&\textnormal{for }t_1\leq s\leq T,
	\end{cases}\label{eq:widetildeepslinear}
	\end{equation*}
	where $\cpmb\in\mbbr^N$ is to be chosen later.
	For $t>t_1$, we have 
	\begin{equation}\label{eq:zpmblinear}
	    \zpmb(t)=e^{(t-t_1)A_+}\zpmb(t_1)+\int_{t_1}^{t}e^{(t-s)A_+}(A_+-A_{\bet})\ybet(\widetilde{\eps};s)ds.
	\end{equation} 
	Multiplying \eqref{eq:zpmblinear} with $e^{-(t-t_1)A_+}$ from the left, we get
	\begin{equation*}
	    \widetilde{\zpmb}(t):=e^{-(t-t_1)A_+}\zpmb(t)=\zpmb(t_1)+\int_{t_1}^{t}e^{(t_1-s)A_+}(A_+-A_{\bet})\ybet(\widetilde{\eps};s)ds.
	\end{equation*}
	Notice that for $s>t_1$, the terms $e^{(t_1-s)A_+}$ and $\ybet(\widetilde{\eps};s)=e^{(s-t_1)A_{\bet}}\vpmb+\int_0^se^{(s-\tau)A_{\bet}}B\cpmb ds$ are continuous in $s$.
	Since exponential matrices are invertible (see, e.g., \cite[pag. 369, 5.6.P43]{horn_johnson_1991}) and $\zpmb(t_1)$ is independent of $t$, there exists a $t>t_1$ such that
	$\zpmb(t_1)+\int_{t_1}^{t}e^{(t_1-s)A_+}(A_+-A_{\bet})\ybet(\widetilde{\eps};s)ds\neq0$ and thus $\widetilde{\zpmb}(t)\neq0$.
	Using \eqref{eq:zpmblinear}, we obtain 
	\begin{equation}\label{eq:Czpmblin}
	    C\zpmb(t)=Ce^{(t-t_1)A_+}\widetilde{\zpmb}(t)=\sum_{j=0}^\infty \frac{(t-t_1)^j}{j!}CA_+^j\widetilde{\zpmb}(t).
	\end{equation}
	The observability of $(A_+,C)$ guarantees the existence of an integer $i\in\{0,\ldots,N-1\}$ such that $CA_+^i\widetilde{\zpmb}(t)\neq0$.
	We have $\frac{(t-t_1)^i}{i!}>0$ for $t>t_1$ and all terms of the sum in \eqref{eq:Czpmblin} converge to zero at different rates for different $j$. 
	Hence, there exists $t>t_1$ such that $C\zpmb(t)\neq0$.
	Since $t_1\in(0,T]$ was chosen arbitrarily, we obtain $C\zpmb(T)\neq0$ and thus $C\ybet(\widetilde{\eps};T)-C\yplus(\widetilde{\eps};T)\neq0$.
	
	Regarding the linearized system \eqref{eq:ODElinearlinearized}, we have already shown in Lemma \ref{lem:positivitymaxproblems} that there exists an $\eps\in E_{ad}$ such that $C\delta\ypmb_\circ(A^{(k)}(\bet^k),\eps;T)- C\delta\ypmb_\circ(A_{k+1},\eps;T)\neq0$.
	
	Finally, the maps $S,S_\delta: \Ltwo\rightarrow \mbbr^N$, $S(\eps):= C\ybet(\eps;T)-C\yplus(\eps;T)$, $S_\delta(\eps):= C\delta\ypmb_\circ(A^{(k)}(\bet^k),\eps;T)- C\delta\ypmb_\circ(A_{k+1},\eps;T)$ are analytic by Lemma \ref{lem:AnalyticControlToStateMaps}.
	Using Lemma \ref{lem:AnalyticFunctionsSplitting}, we obtain the existence of an $\eps\in E_{ad}$ such that $C\ypmb(A_{\bet},\eps;T)- C\ypmb(A_+,\eps;T)\neq0$ and $C\delta\ypmb_\circ(A^{(k)}(\bet^k),\eps;T)- C\delta\ypmb_\circ(A_{k+1},\eps;T)\neq0$.
	
	The proof for the bilinear setting \eqref{eq:ODEbilinear} from Section \ref{sec:BilinearReconstruction} is analogous to the one above and we omit it here for brevity.
\end{proof}
\begin{remark}\label{rem:MainResult}
    Notice that we did not prove exactly Assumption \ref{assum:LocalSplitting} in Theorem \ref{thm:MainResult}, but only the existence of a general control $\eps\in E_{ad}$ that satisfies \eqref{eq:AssumSplittingIneqOne}-\eqref{eq:AssumSplittingIneqTwo}.
    However, this implies that any solution $\eps^{k+1}$ to \eqref{eq:SplittingGeneralNonlin} always satisfies \eqref{eq:AssumSplittingIneqOne}.
    Additionally, we recall from the proof of Theorem \ref{thm:MainResult} that the maps $S,S_\delta: L^2(0,T;\mbbr^M)\rightarrow \mbbr^N$, defined by $S(\eps):= \ypmb(A,\eps;T)$, $S_\delta(\eps):= \delta\ypmb_{\circ}(A,\eps;T)$ are nonzero.
    Thus, we obtain by Lemma \ref{lem:AnalyticFunctionsSplitting} that any neighborhood of $\eps^{k+1}$ can contain only isolated controls not satisfying \eqref{eq:AssumSplittingIneqTwo}, and infinitely many $\eps$ that do satisfy \eqref{eq:AssumSplittingIneqTwo}.
    Thus, it is rather unlucky to choose an $\eps^{k+1}$ not satisfying \eqref{eq:AssumSplittingIneqTwo}.
    On the other hand, one can also add inequality \eqref{eq:AssumSplittingIneqTwo} as a constraint to \eqref{eq:SplittingGeneralNonlin} to ensure that both inequalities are met by $\eps^{k+1}$.
\end{remark}
As a consequence of Theorems \ref{thm:MainTheorem}, \ref{thm:MainResult} and Remark \ref{rem:MainResult}, the controls generated by Algorithm \ref{algo:GeneralNonlin} for \eqref{eq:ODElinear} and \eqref{eq:ODEbilinear} make the GN matrix $\widehat{W}_\circ$, defined in \eqref{eq:Wwidehatcirc_ij}, PD under certain assumptions.
Thus, the results from Sections \ref{subsec:PosDefGaussNewtonLinear} and \ref{subsec:PosDefGaussNewtonBilinear} imply that GN for the reconstruction problems \eqref{eq:identificationlinear} and \eqref{eq:IdentificationProblemBilinear}, initialized with $\al_\circ$, converges to $\al_\star$.

\subsubsection{\comment{A general class of nonlinear systems}}\label{sub:sub:2}

\comment{
Now, we focus on a class of more general nonlinear systems and prove our second main result, which is analogous to Theorem \ref{thm:MainResult}.
These systems (of the form \eqref{eq:ODE}) are characterized by a function $f$ of the form $f(A,\ypmb,\eps)=g(\ypmb)+A\eps$, and thus have the form
\begin{equation}\label{eq:new_syst}
\dot{\ypmb}(t) = g(\ypmb(t))+A(\alstar)\eps(t),\quad t\in[0,T],
\end{equation}
where $\ypmb(t)\in\mbbr^N,N\in\mbbn$.
We make the following hypotheses.
\begin{assumption}[on the system \eqref{eq:new_syst}]\label{ass:new}
    The initial state is $\ypmb(0)=0$.
	The function $g$ is in $C^1$  and satisfies $g(0)~=~0$.\footnote{\comment{Notice that the first two hypotheses imply that the uncontrolled system ($\eps = 0$) system has an equilibrium point at zero.}} Its Lipschitz constant is denoted by $L$.
	The approximation of $\alstar$ is given by $\al_\circ=0$.
	The state $\ypmb(T)$ can be fully observed, meaning that the observer matrix is the identity: $C=I\in\mathbb{R}^{N\times N}$.
\end{assumption}
}

\comment{
	Assumption~\ref{ass:new} implies that the linearized  equation around $\alcirc=0$ is given by 
	\begin{align*}
	\dot{\delta\ypmb}_\circ(t)&=g'(\ypmb_\circ(t))\delta\ypmb_\circ(t) + A(\al)\eps(t),\quad \delta\ypmb(0)=0,\\
	\dot{\ypmb}_\circ(t)&= g(\ypmb_\circ(t)),\quad \ypmb_\circ(0)=0,
	\end{align*}
    where we used that $A(\al_\circ=0)=0$.
	Using again Assumption~\ref{ass:new}, and in particular that $\ypmb_\circ(0)=0$ and $g(0)=0$, we obtain $\ypmb_\circ(t)=0$ for all $t\in[0,T]$ and therefore
	\begin{equation*}
	\dot{\delta\ypmb}_\circ(t)=g'(0)\delta\ypmb_\circ(t) + A(\al)\eps(t).
	\end{equation*}
 }
 \comment{
	Now, we show that there exists a control satisfying inequality \eqref{eq:AssumSplittingIneqOne} in Assumption~\ref{assum:LocalSplitting}.
	We then introduce the notation
	$\widetilde{A}:=A^{(k)}(\bet^k)-A_{k+1}$, $\ypmb_{\bet}(t):=\ypmb(A^{(k)}(\bet^k),\eps^{k+1};t)$, 
	$\ypmb_{k+1}(t):=\ypmb(A_{k+1},\eps^{k+1};t)$, 
	and $\Delta\ypmb(t):=\ypmb_{\bet}(t)-\ypmb_{k+1}(t)$.
	Then, $\Delta\ypmb(t)$ satisfies
	\begin{equation}\label{eq:dotDeltaypmb}
	\dot{\Delta\ypmb}(t)=g(\ypmb_{\bet}(t))-g(\ypmb_{k+1}(t))+\widetilde{A}\eps^{k+1}(t), \quad \Delta\ypmb(0) = 0.
	\end{equation}
	By multiplying both sides of \eqref{eq:dotDeltaypmb} with $\widetilde{A}\eps^{k+1}(t)$,
    and using that $g$ is Lipschitz continuous with Lipschitz constant $L$, we have
	\begin{align*}
	\|\widetilde{A}\eps^{k+1}(t)\|_2^2&= \langle \widetilde{A}\eps^{k+1}(t),\dot{\Delta\ypmb}(t)\rangle+\langle \widetilde{A}\eps^{k+1}(t),g(\ypmb_{k+1}(t))-g(\ypmb_{\bet}(t))\rangle\\
    &{\leq}\|\widetilde{A}\eps^{k+1}(t)\|_2\Big(\|\dot{\Delta\ypmb}(t)\|_2+L\|\Delta\ypmb(t)\|_2 \Big),
	\end{align*}
    where we used Cauchy-Schwarz and triangle inequalities.
	Thus, we obtain
	\begin{equation}\label{eq:estim_new}
	\|\widetilde{A}\eps^{k+1}(t)\|_2\leq\|\dot{\Delta\ypmb}(t)\|_2+L\|\Delta\ypmb(t)\|_2.
	\end{equation}
}
\comment{
	Since $A^{(k)}(\bet^k)$ and $A_{k+1}$ are linearly independent, we have that $\widetilde{A}\neq0$. Thus, there exists $\vpmb\in\mbbr^M$ such that $\widetilde{A}\vpmb\neq0$.
	Now, we define the control 
	$\eps_{\vpmb,\tau}(t):=
	\begin{cases*}
	0,\quad t\in[0,\tau),\\\vpmb,\quad t\in[\tau,T],
	\end{cases*}$
	for an arbitrary $\tau\in(0,T)$.
	Hence, using \eqref{eq:estim_new}, for all $t>\tau$ it holds that
	\begin{equation*}
	\|\dot{\Delta\ypmb}(t)\|_2+L\|\Delta\ypmb(t)\|_2 \geq \|\widetilde{A}\eps_{\vpmb,\tau}(t)\|_2>0.
	\end{equation*}
 }
 \comment{
    Recalling that $C=I$ by Assumption~\ref{ass:new},
	if $\|\Delta\ypmb(T)\|_2>0$, the first inequality \eqref{eq:AssumSplittingIneqOne} is satisfied and we could move to inequality \eqref{eq:AssumSplittingIneqTwo}.
	Thus, assume that $\|\Delta\ypmb(T)\|_2=0$. Then $\|\dot{\Delta\ypmb}(T)\|_2\neq0$ and therefore $\|\Delta\ypmb(T-\eta)\|_2\neq0$ for some $\eta>0$. Notice that $\eta$ is independent of $\tau$. 
	Since $\Delta\ypmb(t)=0$ for $t\in[0,\tau)$, we can simply shift $\tau$ by $\eta$, namely considering $\eps_{\vpmb,\tau+\eta}$, to obtain $\|\Delta\ypmb(T)\|_2>0$.
	In conclusion, we have found a control, namely $\eps^{k+1}=\eps_{\vpmb,\tau+\eta}$, satisfying \eqref{eq:AssumSplittingIneqOne}.
}

\comment{Now, we show that $\eps^{k+1}=\eps_{\vpmb,\tau+\eta}$ satisfies \eqref{eq:AssumSplittingIneqTwo} in Assumption~\ref{assum:LocalSplitting} as well.
	To do so, let 
    $\Delta\ypmb_\delta(t):=\delta\ypmb_\circ(A^{(k)}(\bet^k),\eps^{k+1};t)-\delta\ypmb_\circ(A_{k+1},\eps^{k+1};t)$,
	which clearly satisfies
	\begin{equation*}
	\dot{\Delta\ypmb}_\delta(t)=g'(0)\Delta\ypmb_\delta(t)+\widetilde{A}\eps^{k+1}(t),\quad \Delta\ypmb_\delta(0)=0.
	\end{equation*}
    Now, since $\widetilde{A}\eps^{k+1}(t)=\widetilde{A}\eps_{\vpmb,\tau+\eta}(t)\neq0$ for $t\in[\tau+\eta,T]$, thus $\dot{\Delta\ypmb}_\delta(\tau+\eta)\neq0$ 
    Hence, since $t\mapsto \Delta\ypmb_\delta(t)$ is continuous, there exists a $\widetilde{\eta}>0$ such that $\tau+\widetilde{\eta} \in (\tau+\eta,T]$ and
    $\Delta\ypmb_\delta(t) \neq 0$ for all $t\in (\tau+\eta,\tau+\widetilde{\eta}]$.
    If $\Delta\ypmb_\delta(T) \neq 0$ we proved our claim. Thus, we assume that
    $\tau+\widetilde{\eta}<T$ and $\Delta\ypmb_\delta(T)=0$. In this case, we can use the fact that $\tau$ is arbitrary and choose it to shift the interval $(\tau+\eta,\tau+\widetilde{\eta}]$ in order to get that $T\in (\tau+\eta,\tau+\widetilde{\eta}]$. This choice of $\tau$ implies that the control $\eps^{k+1}=\eps_{\vpmb,\tau+\eta}$ satisfies both inequalities \eqref{eq:AssumSplittingIneqOne} and \eqref{eq:AssumSplittingIneqTwo} in Assumption~\ref{assum:LocalSplitting}. 
 }

 \comment{
We can summarize our findings in the next theorem, which is the analog of Theorem~\ref{thm:MainResult} for \eqref{eq:new_syst}. 
\begin{theorem}[\comment{analysis for systems of the form \eqref{eq:new_syst}}]\label{thm:MainResult2}
	Consider the system \eqref{eq:new_syst} and let Assumption~\ref{ass:new} hold.
	Then the set of controls in $E_{ad}$ that satisfy both \eqref{eq:AssumSplittingIneqOne} and \eqref{eq:AssumSplittingIneqTwo} in Assumption \ref{assum:LocalSplitting} is nonempty.
\end{theorem}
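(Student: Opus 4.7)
The plan is to exploit the simplifications afforded by Assumption~\ref{ass:new}. First, since $\ypmb(0)=0$ and $g(0)=0$, the uncontrolled trajectory associated with $\alcirc = 0$ vanishes identically, so the linearized equation degenerates to the constant-coefficient ODE $\dot{\delta\ypmb}_\circ = g'(0)\delta\ypmb_\circ + A(\al)\eps$. Second, since $C = I$, inequalities \eqref{eq:AssumSplittingIneqOne} and \eqref{eq:AssumSplittingIneqTwo} reduce to requiring $\Delta\ypmb(T) \neq 0$ and $\Delta\ypmb_\delta(T) \neq 0$, where $\Delta\ypmb := \ypmb(A(\alcirc)+A^{(k)}(\bet^k),\eps;\cdot) - \ypmb(A(\alcirc)+A_{k+1},\eps;\cdot)$ and $\Delta\ypmb_\delta$ is the analogous difference of linearized states. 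My goal would be to construct a single admissible control $\eps^{k+1}$ realizing both.

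The central step is to set $\widetilde{A} := A^{(k)}(\bet^k) - A_{k+1}$, which is nonzero by linear independence of $\mathcal{A}$, to write the ODE $\dot{\Delta\ypmb} = g(\ypmb_\bet) - g(\ypmb_{k+1}) + \widetilde{A}\eps$, and to pair it with $\widetilde{A}\eps(t)$ using Cauchy--Schwarz together with the Lipschitz constant $L$ of $g$. This yields the pointwise bound
\begin{equation*}
\norm{\widetilde{A}\eps(t)}_2 \leq \norm{\dot{\Delta\ypmb}(t)}_2 + L\norm{\Delta\ypmb(t)}_2.
\end{equation*}
Choosing $\vpmb$ with $\widetilde{A}\vpmb \neq 0$ and taking the piecewise constant control that vanishes on $[0,\tau)$ and equals $\vpmb$ on $[\tau,T]$, the left-hand side is strictly positive on $(\tau,T]$, forcing $\Delta\ypmb$ or $\dot{\Delta\ypmb}$ to be nonzero there.

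The only obstacle is the accidental vanishing of $\Delta\ypmb(T)$ or $\Delta\ypmb_\delta(T)$ for that specific $\tau$; I would overcome this using the freedom in $\tau$. If $\Delta\ypmb(T)=0$, then $\dot{\Delta\ypmb}(T)\neq 0$, so by continuity $\Delta\ypmb(T-\eta)\neq 0$ for some $\eta>0$ independent of $\tau$, and replacing $\tau$ by $\tau+\eta$ moves the nonzero value to $T$, proving \eqref{eq:AssumSplittingIneqOne}. For \eqref{eq:AssumSplittingIneqTwo}, $\Delta\ypmb_\delta$ solves the linear ODE $\dot{\Delta\ypmb}_\delta = g'(0)\Delta\ypmb_\delta + \widetilde{A}\eps$ with zero initial data; since $\widetilde{A}\eps$ is nonzero on the support of the chosen control, $\dot{\Delta\ypmb}_\delta$ is nonzero somewhere there and hence $\Delta\ypmb_\delta$ is nonzero on a subinterval. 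A second small time-shift of $\tau$ aligns that subinterval so that it contains $T$, while preserving the first inequality. Combining the shifts yields a single control $\eps^{k+1}\in E_{ad}$ witnessing both inequalities; the main subtlety of the argument lies precisely in this shifting bookkeeping rather than in any hard analytic step.
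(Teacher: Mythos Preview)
Your proposal is correct and follows essentially the same approach as the paper's proof: the same reduction via $\alcirc=0$, $g(0)=0$, $C=I$; the same key inequality $\|\widetilde{A}\eps(t)\|_2 \leq \|\dot{\Delta\ypmb}(t)\|_2 + L\|\Delta\ypmb(t)\|_2$ obtained by pairing with $\widetilde{A}\eps(t)$ and using Lipschitz continuity; the same piecewise-constant control $\eps_{\vpmb,\tau}$; and the same time-shifting device to force nonvanishing at $T$ for both $\Delta\ypmb$ and $\Delta\ypmb_\delta$. Your closing remark that the subtlety lies in the shifting bookkeeping is exactly right, and your explicit mention that the second shift must preserve the first inequality is a point the paper leaves implicit (it relies tacitly on the autonomy of the system, so that shifting $\tau$ shifts the entire trajectory rigidly).
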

\begin{remark}
    Theorem~\ref{thm:MainResult2} is an alternative proof of global convergence for the linear-quadratic setting considered in \cite[Section 4]{siampaper}, where the unknown is the control matrix.
    However, it does not cover the linear setting discussed in section \ref{sec:LinearReconstruction}, where the unknown is the drift matrix.
\end{remark}
\begin{remark}
    As in Remark \ref{rem:MainResult}, Theorem \ref{thm:MainResult2} does not exactly prove Assumption \ref{assum:LocalSplitting}, but rather that there exist controls that simultaneously satisfy \eqref{eq:AssumSplittingIneqOne} and \eqref{eq:AssumSplittingIneqTwo}.
    On the other hand, the map $\eps\mapsto\Delta\ypmb_\delta(T)=\delta\ypmb_\circ(A^{(k)}(\bet^k),\eps;T)-\delta\ypmb_\circ(A_{k+1},\eps;T)$ is analytic by Lemma~\ref{lem:AnalyticControlToStateMaps} (under certain regularity of $f$), and the map $t\mapsto\Delta\ypmb_\delta(t)$ is (at least) continuous.
    In this case, the control functions $\eps$ that violate simultaneously \eqref{eq:AssumSplittingIneqOne} and \eqref{eq:AssumSplittingIneqTwo} are isolated points.
    Therefore, it is theoretically possible but also very unlikely that the solutions $\eps^{k+1}$ to the nonlinear splitting step problem \eqref{eq:SplittingGeneralNonlin} do not satisfy inequality \eqref{eq:AssumSplittingIneqTwo} in Assumption \ref{assum:LocalSplitting}.
\end{remark}
}

\subsection{Optimized GR Algorithm}\label{sec:Improvements}
The analyses discussed in the previous sections are based on certain hypotheses of observability and controllability of the dynamical system. 
However, as shown already in \cite{siampaper},
if they are not satisfied, the choice of the elements of $\mathcal{A}$ becomes very relevant and can strongly affect the online reconstruction process. 
For this reason, a modified GR algorithm  
called Optimized GR (OGR) has been introduced in~\cite{siampaper} to identify important basis elements by solving in each iteration the fitting and splitting step problems (in parallel) for all remaining basis elements. 
This also allows us to initialize the algorithm with a larger number of elements $(A_j)_{j=1}^K$, i.e., $K>N^2$.
Even though some of the matrices $A_j$ will inevitably be linearly dependent if $K>N^2$, the OGR algorithm manipulates them to construct a new subset of linearly independent ones. 
In the spirit of the previous analysis, we add a new feature to the original OGR algorithm.
At iteration $k$, after all fitting step problems have been solved, we check whether there exists $\ell\in\{k+1,\ldots,K\}$ for which the optimal cost function value is different from zero (i.e. larger than a tolerance $\textrm{tol}_2$).
If this is the case, then there exists a control $\eps^m$, $m\in\{1,\ldots,k\}$, that already satisfies $\norm{C\ypmb(A^{(k)}(\bet^{\ell}),\eps^m;T)-C\ypmb(A_{\ell},\eps^m;T)}_2^2>\textrm{tol}_2$ for at least one index $\ell_{k+1}\in\{k+1,\ldots,K\}$ (see Step 8 in Algorithm~\ref{alg:OGRlinear}). 
Hence, we can add the element $A_{\ell_{k+1}}$ to the already selected ones without computing a new control.
This new improvement can also be motivated with the matrix formulation we used for our analysis. If $\ranktxt(\widehat{W}_\circ^{(k)})=r>k$, one can appropriately permute rows and columns of $\widehat{W}_\circ^{(k)}$ such that $[\widehat{W}_\circ^{(k)}]_{[1:r,1:r]}$ has rank $r$ and is thus PD.
The rank of $\widehat{W}_\circ^{(k)}=\sum_{m=1}^{k}W_\circ(\eps^m)$ is bounded by $kP$ (recall $C \in \mathbb{R}^{P \times N}$).
This can be seen by writing $W_\circ(\epsilon^m)$, as defined in \eqref{eq:Wwidehatcirc_ij}, as $W_\circ(\epsilon^m)=\delta Y_\circ^\top C^\top C\delta Y_\circ$, where $\delta Y_\circ:=\begin{bmatrix}
\delta\ypmb_\circ(A_1,\eps^m;T),\cdots,\delta\ypmb_\circ(A_K,\eps^m;T)
\end{bmatrix}$.
Hence, $\ranktxt(W_\circ(\epsilon^m))\leq \ranktxt(C)\leq P$, and thus $\ranktxt(\widehat{W}_\circ^{(k)})\leq kP$.

OGR is stated in Algorithm \ref{alg:OGRlinear}, where the new feature corresponds to the steps 8-9.
Algorithm \ref{alg:OGRlinear} can also be formulated for the linearized setting of the previous sections by replacing $\ypmb$ with its linearization $\delta\ypmb_\circ$.
We call OLGR the OGR algorithm for the linearized system.

\begin{algorithm}[t]
	\caption{Optimized Greedy Reconstruction (OGR) Algorithm}
	\begin{small}
		\begin{algorithmic}[1]\label{alg:OGRlinear}
			\REQUIRE A set of $K$ matrices $\mathcal{A}=\{A_1,\ldots,A_K\}$
			and two tolerances $\textrm{tol}_1>0$ and $\textrm{tol}_2>0$.
			\STATE Set $\eps^0=0$ and compute $\eps^1$ and the index $\ell_1$ by solving the initialization problem
			\begin{equation*}
			\max_{\ell\in\{1,\ldots,K \}}\max_{\eps\in E_{ad}} \norm{C\ypmb(0,\eps;T)-C\ypmb(A_{\ell},\eps;T)}_2^2.
			\end{equation*}
			\STATE Swap $A_1$ and $A_{\ell_1}$ in $\mathcal{A}$, and set $k=1$ and $A^{(0)}(\bet^{\ell_1})=0$.
			\WHILE{ $k\leq K-1$ and \begin{small}$\norm{C\ypmb(A^{(k-1)}(\bet^{\ell_k}),\eps^k;T)-C\ypmb(A_{k},\eps^k;T)}_2^2 > {\rm tol}_1$\end{small} }
			\FOR{$\ell=k+1,\ldots,K$}
			\STATE Orthogonalize all elements $(A_{k+1},\ldots,A_K)$ with respect to $(A_1,\ldots,A_k)$, remove any that are linearly dependent and update $K$ accordingly.
			\STATE \underline{Fitting step}: Find $(\bet^{\ell}_j)_{j=1,\dots,k}$ that solve the problem
			\begin{equation*}
			\min_{\bet\in\mbbr^{k}} \sum_{m=1}^{k}\norm{C\ypmb(A^{(k)}(\bet),\eps^m;T)-C\ypmb(A_{\ell},\eps^m;T)}_2^2,
			\end{equation*}
			and set $f_\ell = \sum_{m=1}^{k} \begin{small}\norm{C\ypmb(A^{(k)}(\bet^\ell),\eps^m;T)-C\ypmb(A_{\ell},\eps^m;T)}_2^2\end{small}$.
			\ENDFOR
			\IF {$\max_{\ell=k+1,\dots,K} f_\ell > {\rm tol}_2$ }
			\STATE Set $\ell_{k+1} = \argmax_{\ell=k+1,\dots,K} f_\ell$.
			\ELSE
			\STATE \underline{Extended splitting step}: Find $\eps^{k+1}$ and $\ell_{k+1}$ that solve the problem
			\begin{equation*}
			\max_{\ell\in\{k+1,\ldots,K \}}\max_{\eps\in E_{ad}}\norm{C\ypmb(A^{(k)}(\bet^{\ell}),\eps;T)-C\ypmb(A_{\ell},\eps;T)}_2^2.
			\end{equation*}
			\ENDIF
			\STATE Swap $A_{k+1}$ and $A_{\ell_{k+1}}$ in $\mathcal{A}$, and set $k = k+1$.
			\ENDWHILE
		\end{algorithmic}
	\end{small}
\end{algorithm}
\setlength{\textfloatsep}{8pt}

\subsection{\comment{Global convergence in a specific case}}\label{sec:global}
\comment{In the above sections, we discussed the performance of the GR algorithm on the local convergence of GN for the online identification problem. 
In the context of inverse problems for nonlinear ODEs, global convergence can generally not be expected, independently of the method used to generate control functions or data.
A sufficient condition for the global convergence of GN applied to such problems is strict convexity of the online reconstruction problem \eqref{eq:identificationproblem}.
Thus, if GR can produce controls making the cost of \eqref{eq:identificationproblem} strictly convex, then GN (or other optimization solvers) exhibits global convergence.
One example, where strict convexity can be proven under certain assumptions, is given by the linear-quadratic setting discussed in \cite[Section 4]{siampaper}.
In what follows, we discuss global convergence (convexity) for a specific case of a bilinear system.}

\comment{
	The goal is to reconstruct a matrix $A_\star=\al_{\star,1}A_1+\al_{\star,2}A_2\in\mbbr^{2\times2}$ of the system
	\begin{equation}\label{eq:bilinearexample}
	\dot{\ypmb}(t)=\epsilon(t)A_\star\ypmb(t),\quad t\in[0,T],\quad \ypmb(0)=\ypmb^0,
	\end{equation}
	where $T=1$. Assume that
	$\ypmb^0=
    \begin{small}    
    \begin{bmatrix}
	1\\0
	\end{bmatrix}
    \end{small}
    $,
    $\alstar =\alcirc= 
    \begin{small} 
    \begin{bmatrix}
	0\\0
	\end{bmatrix}
    \end{small}$,
    $A_1=\begin{small} 
    \begin{bmatrix}
	1&0\\0&-1
	\end{bmatrix}
    \end{small}$,
    $A_2=\begin{small} 
    \begin{bmatrix}
	0&1\\1&0
	\end{bmatrix}
    \end{small}$.
    We also assume that we can observe the full state $\ypmb(T)$, i.e., $C=I \in \mbbr^{2\times2}$.}
    \comment{
    We consider that $E_{ad}$ is a set of constant controls that are bounded in absolute value by 1, i.e., $\epsilon(t)\equiv \epsilon\in U_{ad}=[-1,1]$.
	Thus, the solution to \eqref{eq:bilinearexample} can be written explicitly as $\ypmb(\al,u;t)=e^{t\epsilon A(\al)}\ypmb^0$.
	Hence, the final reconstruction problem \eqref{eq:identificationproblem} reads as
	\begin{equation}\label{eq:globalreconstructionproblem}
	\min_{\al\in\mbbr^2}J(\al):=\sum_{m=1}^{2}\|e^{(\epsilon_m(\al_1A_1+\al_2A_2))}\ypmb^0-e^{\epsilon_m A_\star}\ypmb^0\|_2^2.
	\end{equation}
	Since $A_\star$ is zero, we have $e^{\epsilon A_\star}\ypmb^0=\ypmb^0$.
	Using the expressions for $A_1$ and $A_2$ and the result \cite[Cor 2.3]{MatrixExponentials}, one gets 
	$e^{\epsilon(\al_1A_1+\al_2A_2)}\ypmb^0
    =
    \frac{1}{\|\al\|_2}
    \begin{small}
    \begin{bmatrix}
	\|\al\|_2\cosh(\|\al\|_2\epsilon)+\al_1\sinh(\|\al\|_2\epsilon)\\
	\al_2\sinh(\|\al\|_2\epsilon)
	\end{bmatrix}\end{small}$.
    Thus, by a direct calculation, we can write the cost function $J$ in \eqref{eq:globalreconstructionproblem} as
	\begin{align*}
	J(\al)	&=\sum_{m=1}^2\Big(\cosh(\|\al\|_2\epsilon_m)^2+\sinh(\|\al\|_2\epsilon_m)^2-2\cosh(\|\al\|_2\epsilon_m)\\
	&\quad+\frac{2\al_1\sinh(\|\al\|_2\epsilon_m)}{\|\al\|_2}(\cosh(\|\al\|_2\epsilon_m)-1)+1\Big)
	\end{align*}
	for $\al\neq0$ and $J(0)=0$.
	Assume now that $\epsilon_1=-1$ and $\epsilon_2=1$.
	Changing the variable to $x=\|\al\|_2$, and using that $\sinh(-x)=-\sinh(x)$ and $\cosh(-x)=\cosh(x)$, we get
	\begin{equation*}
	J(\al) = 2\cosh(\|\al\|_2)^2+2\sinh(\|\al\|_2)^2-4\cosh(\|\al\|_2)+2 =: \widetilde{J}(x).
	\end{equation*}
	Thus, $\widetilde{J}''(x)= 8\cosh(x)^2+8\sinh(x)^2-4\cosh(x)$.
	Since $\cosh(x)\geq1$ and $\sinh(x)\geq0$ for $x\geq0$, it holds that $\widetilde{J}''(x)>0$ for $x\geq0$.
	Therefore, $J$ is strictly convex.
	Thus, a sufficient condition for global convergence of GN is that our proposed methods select the two controls $\epsilon_1=-1$ and $\epsilon_2=1$ (or $\epsilon_1=1$ and $\epsilon_2=-1$).
    Notice that, if only one control is used among $1$ and $-1$, then $J$ is not convex.
    }

    \comment{
	Let us now study the GR algorithm.
    The initialization problem is given by
	\begin{equation*}\label{eq:GlobalInitialization}
	\max_{\epsilon\in[-1,1]}J_I(u):=\|\ypmb^0-e^{\epsilon A_1}\ypmb^0\|_2^2,
	\end{equation*}
	where, by a direct calculation, we get $J_I(\epsilon) =  (\exp(\epsilon)-1)^2$.
    One can show that $J_I(\epsilon)$ attains its unique global maximum at $\epsilon=1$. 
	Thus, assuming that the splitting step solver converges to the unique global maximizer, GR will choose $\epsilon_1=1$.
	Let us now consider the fitting step problem
	\begin{equation*}
	\min_{\alpha\in\mbbr}J_F(\alpha):=\|e^{\epsilon_1\alpha A_1}\ypmb^0-e^{\epsilon_1A_2}\ypmb^0\|_2^2.
	\end{equation*}
	Direct calculations lead to
	$J_F(\alpha)=e^{2\alpha}-2\cosh(1)e^\alpha+\cosh(1)^2+\sinh(1)^2$ and
	$J_F'(\alpha)=2e^{\alpha}(e^\alpha-\cosh(1))$.
	Clearly, $J_F'(\log(\cosh(1)))=0$ and $J_F'(\alpha)$ is negative for $\alpha<\log(\cosh(1))$ and positive for $\alpha>\log(\cosh(1))$.
	Since it is continuous in $0$, $J_F$ has a global minimum in $\alpha_1:=\log(\cosh(1))$.
	Now, consider the splitting step problem
	\begin{equation}\label{eq:maxeps}
	\max_{\epsilon\in[-1,1]}J_S(\epsilon):=\|e^{\epsilon \alpha_1A_1}\ypmb^0-e^{\epsilon A_2}\ypmb^0\|_2^2.
	\end{equation}
    Proceeding as before, one can get that \eqref{eq:maxeps} has a unique global maximizer given by $\epsilon=-1$. 
	Thus, assuming again that the splitting step solver converges to the unique global maximizer, GR will choose $\epsilon_2=1$.}

    \comment{
	In conclusion, GR chooses $\epsilon_1=1$ and $\epsilon_2=-1$, making the final reconstruction problem \eqref{eq:globalreconstructionproblem} strictly convex and hence leading to global convergence of GN.}

    \comment{
    The above example is very instructive. On the one hand, as we are going to see, it can be used to reveal some properties (and weaknesses) of GR. 
    On the other hand, it is straightforward  to carry out similar calculations to compare GR, LGR and OGR.
    These calculations (omitted here for brevity) lead to the following remarks.}

    \comment{
        The above results about GR for the specific case \eqref{eq:bilinearexample} depend heavily on the order of the basis elements $A_1$ and $A_2$.
        If one repeats the calculation with the order of $A_1$ and $A_2$ reversed,
        then the initialization problem has two global maximizers, located at $\epsilon=\pm 1$. Independently of which of these is chosen, the fitting step has two global minimizers. Each of them leads to a splitting step problem having two global maximizers located again at $\epsilon=\pm1$, independently of the chosen value in the initialization step.
        Therefore, even if one assumes that the optimizations solvers used for intialization, fitting and splitting steps converge to global (maximum/minimum) points, it can happen that the control computed at the splitting step is equal to the one obtained at the initialization step, making the cost of the final identification problem not convex.
        The importance of the ordering of the elements in $\mathcal{A}$ was already discussed in detail in \cite{siampaper} for the reconstruction of the control matrix $B$ in case of linear systems.
        This was exactly the reason for designing OGR. 
        In fact, if one repeats the above calculation for OGR, the two controls $\epsilon=1$ and $\epsilon=-1$ are always obtained, independently of the ordering of $A_1$ and $A_2$.
        Finally, for LGR one can show that, independently of the ordering of $A_1$ and $A_2$, it can happen that the controls computed at the initialization and splitting steps are equal, leading to a final identification problem which is not convex.
        This is also due to the fact that the linearization process does not allow to fully capture the dynamics of the system.
        Such behavior is also apparent in the numerical experiments performed in the next section.
        }

\section{Numerical experiments}\label{sec:numerics}
In this section, efficiency and robustness of the GR and OGR algorithms are studied by direct numerical experiments. First, we consider the reconstruction of a drift matrix in Section \ref{subsec:NumLinear}. Second, we focus on the reconstruction of a bilinear dipole momentum operator as in Section \ref{subsec:NumBilinear}.
All optimization problems in the GR algorithms are solved by a BFGS descent-direction method. The online identification problem is solved by GN.

\subsection{Reconstruction of drift matrices}\label{subsec:NumLinear}
We consider system \eqref{eq:ODElinear} with (full rank) randomly generated matrices $\Astar,B,C\in\mbbr^{3\times3}$.
The final time is $T=1$ and the initial value is $\ypmb^0=[0,0,0]^\top$.
First, we study the algorithms for system \eqref{eq:ODElinearlinearized}. 
This is obtained by linearizing \eqref{eq:ODElinear} around two different $A_\circ$, which are randomly chosen approximations to $\Astar$, one with $1\%$ and the other with $10\%$ relative error, meaning that, e.g., $\frac{\|\Astar-A_\circ\|_F}{\|\Astar\|_F}=0.01$ for the one with $1\%$ error, where $\|\cdot\|_F$ is the Frobenius norm.
The LGR Algorithm \ref{algo:General} is run for two different choices for the basis $\mathcal{A}$: the canonical basis of $\mbbr^{3\times3}$ and a basis consisting of 9 randomly generated (linearly independent) $3\times3$ matrices.
LGR is also compared with the OLGR Algorithm \ref{alg:OGRlinear}, which is run with a set of 18 matrices, namely, the 9 canonical basis elements and the 9 random matrices.
The controls generated by the respective algorithms are then used to reconstruct the matrix $\Astar$ by solving the online least-squares problem \eqref{eq:identificationproblem} with GN.
To test the robustness of the control functions, we consider a nine-dimensional sphere centered in the global minimum $\Astar$ and with given relative radius $r$, and repeat the minimization for 1000 initialization vectors randomly chosen on this sphere.
We then count the percentage of times that GN converges to the global solution $\Astar=A(\alstar)$ up to a tolerance of $Tol=0.005$ (half of the smallest considered radius), meaning that $\frac{\|\Astar-A(\al_{comp})\|_F}{\|\Astar\|_F}\leq Tol$, where $\al_{comp}$ denotes the solution computed by GN.
Repeating this experiment for different radii of the sphere, we obtain the results reported in the two panels on the left in Figure \ref{fig:LinearSysLinearizedSetting}.
\begin{figure}[t]
\centering
\begin{subfigure}{.5\textwidth}
  \centering
  \includegraphics[width=0.9\textwidth]{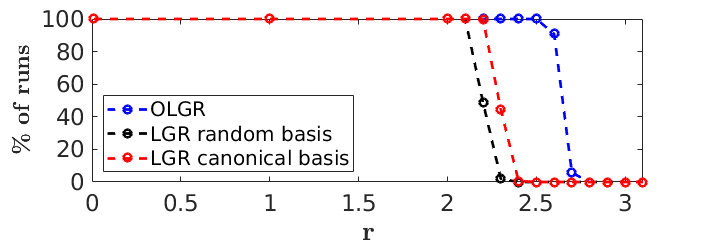}
  \includegraphics[width=0.9\textwidth]{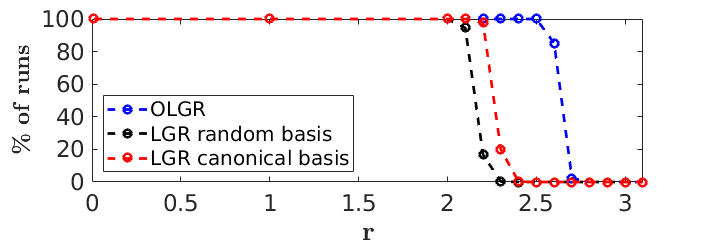}
  \label{fig:sub1}
\end{subfigure}%
\begin{subfigure}{.5\textwidth}
  \centering
  \includegraphics[width=0.9\textwidth]{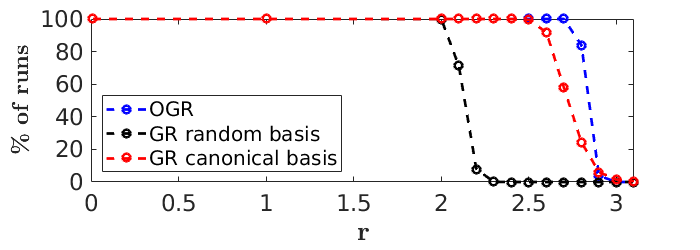}
  \includegraphics[width=0.9\textwidth]{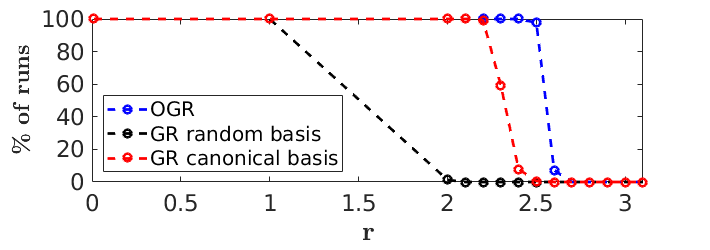}
  \label{fig:sub2}
\end{subfigure}
\vspace*{-4mm}
\caption{Percentage of runs that converged to $A_\star$ initialized by randomly chosen vectors on a 
sphere with radius $r$, for controls generated by LGR and OLGR for $1\%$ (top left) and $10\%$ (bottom left) relative error between $A_\star$ and $A_\circ$, and GR and OGR Algorithm \ref{algo:GeneralNonlin} with (bottom right) and without the shift by $A_\circ$ (top right).}
	\label{fig:LinearSysLinearizedSetting}
\end{figure}
All control sets make GN capable of reliably reconstructing the global minimum $\Astar$ up to a relative radius $r=2$, which corresponds to a relative error of $200\%$.
This demonstrates that the choice of the basis is not crucial for fully observable and controllable systems.
However, OLGR is able to reduce the number of controls down to $3$ and still outperforms any set of $9$ controls generated by LGR, while staying reliable up to a relative error of $250\%$.
Thus, OLGR is able to compute a better basis 
and to omit unnecessary controls.

Next, we repeat the same experiments for the GR Algorithm \ref{algo:GeneralNonlin}.
However, we replace the case for the approximation $A_\circ$ with a relative error of $1\%$ by $A_\circ=0$.
This effectively removes the shift and makes the algorithm independent of the choice of $A_\circ$, which is the version of the algorithm that was also considered in \cite{madaysalomon,siampaper}.
We obtain the results shown in the two panels on the right in Figure \ref{fig:LinearSysLinearizedSetting}.
The performance of the control sets is similar to the ones for the linearized system, with an increase in performance for the GR algorithm with the canonical basis, without the shift by $A_\circ$, and a decrease in performance for the GR algorithm with the random basis and an $A_\circ$ that has a $10\%$ relative error with respect to $A_\star$.
As in the linearized setting, OGR is able to reduce the number of controls down to $3$ and still outperforms any set of $9$ controls generated by LGR.

\subsection{Bilinear reconstruction problem}\label{subsec:NumBilinear}
Similar to \cite{madaysalomon} and \cite{siampaper}, we consider a Schr\"odinger-type equation, written as a real system as in \eqref{eq:ODESchroedingerReal}.
We also use matrices $H$ and $\mustar$ similar to the ones used in \cite{siampaper}, namely
\begin{equation*}
H = H_R =
\begin{footnotesize}
\begin{bmatrix}
4&0&0\\
0&8&0\\
0&0&16
\end{bmatrix}
\end{footnotesize}, \;
\mustar = 
\begin{footnotesize}
\begin{bmatrix}
-0.3243  &  -3.4790+0.7359i  &  -0.5338+1.9254i \\
-3.4790-0.7359i  &  -3.8342  &  -1.1697+2.0256i \\
-0.5338-1.9254i  &  -1.1697-2.0256i & 1.0551\\
\end{bmatrix}.
\end{footnotesize}
\end{equation*}
\vspace{-2mm}

\noindent
The final time is $T=10\pi$ and the initial state is $\psipmb_0=[1,0,0]^\top$.
The observer matrix is $C=[\psipmb_1,i\psipmb_1]$, which means that the final state is measured against the fixed state $\psipmb_1=\frac{1}{\sqrt{3}}[1,1,1]^\top$.
For $E_{ad}$ we use the  box constraints $|\epsilon(t)|\leq1$.
Again, we consider two bases, each consisting of $9$ elements: the canonical and a random one for the space of Hermitian matrices in $\mbbc^{3\times 3}$.
We then perform the same experiments as in Section \ref{subsec:NumLinear}.
The results are reported in Figure \ref{fig:BilinearSysLinearizedSetting}.
\begin{figure}[t]
\centering
\begin{subfigure}{.5\textwidth}
  \centering
  \includegraphics[width=0.9\textwidth]{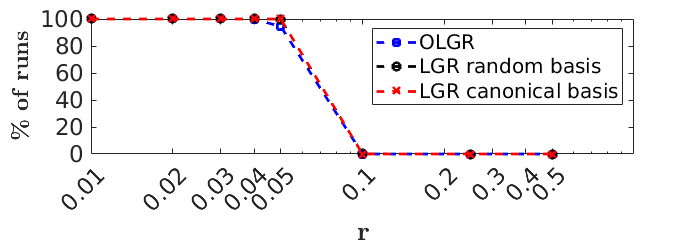}\\
  \includegraphics[width=0.9\textwidth]{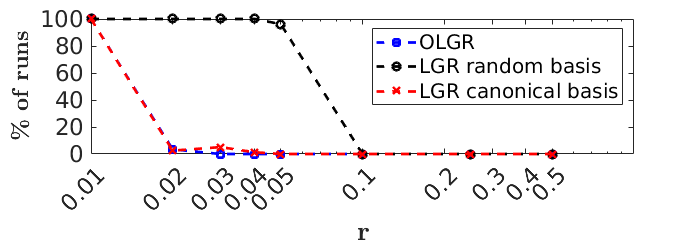}
  \label{fig:BilinearSysLinearizedSetting1}
\end{subfigure}%
\begin{subfigure}{.5\textwidth}
  \centering
  \includegraphics[width=0.9\textwidth]{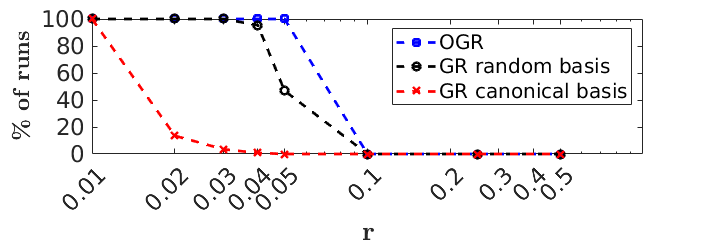}\\
  \includegraphics[width=0.9\textwidth]{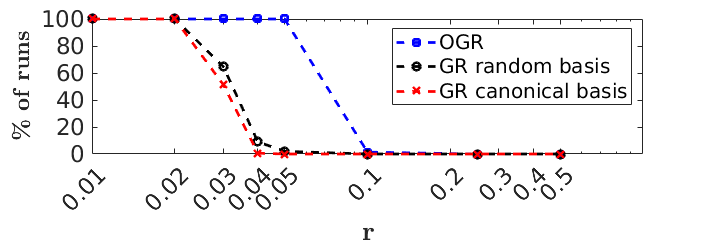}
  \label{fig:BilinearSysLinearizedSetting2}
\end{subfigure}
\vspace*{-4mm}
\caption{Percentage of runs that converged to $\mu_\star \in\mbbc^{3\times 3}$ initialized by randomly chosen vectors on a 
sphere of radius $r$, for controls generated by LGR and OLGR for $1\%$ (top left) and $10\%$ (bottom left) relative error between $\mu_\star$ and $\mu_\circ$, and GR and OGR Algorithm \ref{algo:GeneralNonlin} with (bottom right) and without the shift by $\mu_\circ$ (top right).}
\label{fig:BilinearSysLinearizedSetting}
\end{figure}
We observe that the radii, up to which the control sets make GN capable of reliably reconstructing the global minimum, are much smaller than for the linear setting in Section \ref{subsec:NumLinear}.
When the initial relative error between $\mu_\circ=\mu(\al_\circ)$ and $\mu_\star=\mu(\al_\star)$ is very small ($1\%$) then LGR and OLGR have the most stable performance regarding the choice of the basis, making GN capable of reliably reconstructing the global minimum $\mu_\star$ up to a relative error of $4-5\%$.
However, when the initial relative error is larger ($10\%$) then only the LGR algorithm for the random basis can keep its performance, while even OLGR fails at errors of over $1\%$.
The results for OGR, on the other hand, show the best performance, with and without a shift by $\mu_\circ$.
The controls generated by the GR algorithms can not match OGR or LGR and OLGR for small initial errors, but are still more stable with respect to larger initial errors.

\subsection{\comment{Multi-spin (/-qubit) reconstruction problem}}
\comment{To investigate the performance of the algorithms for systems of larger dimensions, we consider the case of a $3$-qubit system. 
We use the following $64$-dimensional real representation of the Liouville master equation (compare, e.g., \cite[Section 2.12.1]{LibroQuantum}):
	\begin{equation*}
	\dot{\ypmb}(t)=\Biggl(A_\star+u(t)\sum_{n=1}^{6}B_n \Biggr)\ypmb(t),\quad t\in[0,T],\quad \ypmb(0)=\ypmb^0,
	\end{equation*}
    \vspace{-2mm}
	where $A_\star=2\pi\sum_{i=1}^{3}\omega_{\star,i}\widehat{A}_i+2\pi\sum_{j=1}^{2}J_{\star,j}\widetilde{A}_j\in\mbbr^{64\times64}$, $B_n=2\pi \widehat{B}_{\frac{n+1}{2}}$ for $n$ odd and $B_n=2\pi\widetilde{B}_{\frac{n}{2}}$ for $n$ even.
	Here, the matrices $\widehat{A}_i$ and $\widetilde{A}_j$ correspond to the free evolution and the coupling of the spins, and the matrices $\widehat{B}_n$ and $\widetilde{B}_n$ correspond to the controlled evolution.
	The goal is to reconstruct $A_\star$, i.e. the coefficients $\omega_{\star,i},J_{\star,j}$, which we assume to be equal to $1$.
	Thus, we define the set $\mathcal{A}=\{A_1,\ldots,A_{2N_p-1}\}$ as the union of the matrices $\widehat{A}_i$ and $\widetilde{A}_j$.
	Experimentally, we assume that the final state $\ypmb(T)$ is measured against a fixed state $\ypmb^1$, meaning that is $C=(\ypmb^1)^\top$.
    The vector $\ypmb^1$ consists of zeros except for the second, fifth and 17th entries which are equal to one, while the initial state $\ypmb^0$ is equal to one on the fourth, 13th and 49th entries.\footnote{All matrices $\widehat{A}_i$, $\widetilde{A}_j$, $\widehat{B}_n$ and $\widetilde{B}_n$ and the vectors $\ypmb^1$ and $\ypmb^0$ are generated from the codes of \cite{CIARAMELLA2015213}.}
	We perform the same experiments as in Section 8.1 with the difference that GR and OGR are run with the same basis $\mathcal{A}$.
	The results are reported in Figure \ref{fig:3Spins}.
	\begin{figure}[t]
    \centering
    \begin{subfigure}{.5\textwidth}
    \centering
    \includegraphics[width=0.9\textwidth]{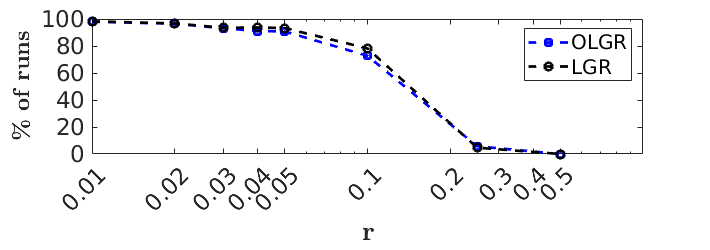}\\
    \includegraphics[width=0.9\textwidth]{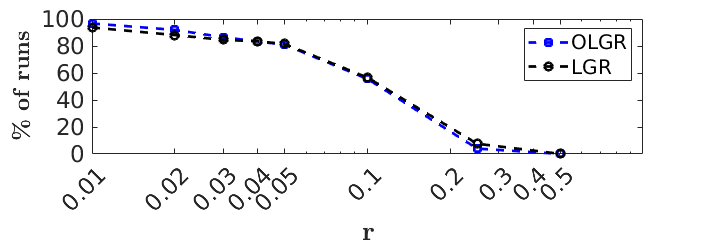}
    \end{subfigure}%
    \begin{subfigure}{.5\textwidth}
    \centering
    \includegraphics[width=0.9\textwidth]{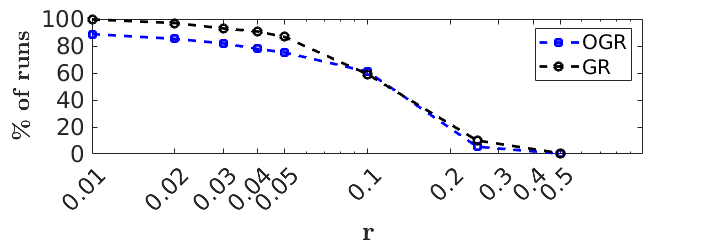}\\
    \includegraphics[width=0.9\textwidth]{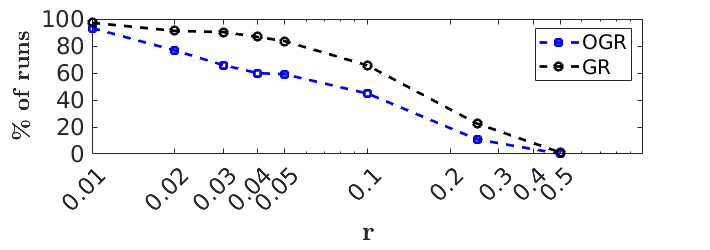}
    \end{subfigure}
    \vspace*{-4mm}
    \caption{Percentage of runs that converged to $A_\star$ initialized by randomly chosen vectors on a 
    sphere of radius $r$, for controls generated by LGR and OLGR for $1\%$ (top left) and $10\%$ (bottom left) relative error between $A_\star$ and $A_\circ$, and GR and OGR with (bottom right) and without the shift by $A_\circ$ (top right).}
    \label{fig:3Spins}
    \end{figure}
All algorithms show essentially a similar performance.
The results are similar to the ones of Section \ref{subsec:NumBilinear} (where the system dimensions are much smaller), showing that the performance of our algorithms does not deteriorates with the dimension of the system.}


\vspace*{0mm}
\section*{Acknowledgements}
G. Ciaramella is member of GNCS Indam.
The present research is part of the activities of “Dipartimento di Eccellenza 2023-2027”.
Simon Buchwald is funded by the DFG via the collaborative research center SFB1432, Project-ID 425217212. Julien Salomon acknowledges support from the 
ANR/RGC ALLOWAPP project (ANR-19-CE46-0013/A-HKBU203/19).

	\vspace*{-5mm}
	\bibliographystyle{siamplain}
	\bibliography{references}

\begin{thebibliography}{10}

\bibitem{Alexanderian2016}
{\sc A.~Alexanderian, N.~Petra, G.~Stadler, and O.~Ghattas}, {\em A fast and
  scalable method for {A}-optimal design of experiments for
  infinite-dimensional {B}ayesian nonlinear inverse problems}, SIAM J. Sci.
  Comput., 38 (2016), pp.~A243--A272.

\bibitem{aliprantis06}
{\sc C.~D. Aliprantis and K.~C. Border}, {\em Infinite Dimensional Analysis: a
  Hitchhiker's Guide}, Springer, 2006.

\bibitem{linearsystems}
{\sc P.~J. Antsaklis and A.~N. Michel}, {\em Linear Systems}, Birkhäuser
  Basel, 2006.

\bibitem{Atkinson2011}
{\sc A.~C. Atkinson}, {\em Optimum Experimental Design}, Springer Berlin
  Heidelberg, 2011.

\bibitem{Badouin2008}
{\sc L.~Baudouin and A.~Mercado}, {\em An inverse problem for {Schr\"odinger}
  equations with discontinuous main coefficient}, Appl. Anal., 87 (2008),
  pp.~1145--1165.

\bibitem{BAUER1960}
{\sc F.~Bauer and C.~Fike}, {\em Norms and exclusion theorems}, Numer. Math., 2
  (1960), pp.~137--141.

\bibitem{Bauer}
{\sc I.~Bauer, H.~G. Bock, S.~K{\"o}rkel, and J.~P. Schl{\"o}der}, {\em
  Numerical methods for optimum experimental design in dae systems}, J. Comput.
  Appl. Math., 120 (2000), pp.~1 -- 25.

\bibitem{nla.cat-vn740512}
{\sc C.~Berge}, {\em Topological spaces: including a treatment of multi-valued
  functions, vector spaces and convexity; Translated by E.M. Patterson}, Oliver
  \& Boyd Edinburgh, 1963.

\bibitem{MatrixExponentials}
{\sc D.~Bernstein and W.~So}, {\em Some explicit formulas for the matrix
  exponential}, IEEE Transactions on Automatic Control, 38 (1993),
  pp.~1228--1232, \url{https://doi.org/10.1109/9.233156}.

\bibitem{Bonnabel2009}
{\sc S.~Bonnabel, M.~Mirrahimi, and P.~Rouchon}, {\em Observer-based
  {H}amiltonian identification for quantum systems}, Automatica, 45 (2009),
  pp.~1144 -- 1155.

\bibitem{LibroQuantum}
{\sc A.~Borz\`i, G.~Ciaramella, and M.~Sprengel}, {\em Formulation and
  Numerical Solution of Quantum Control Problems}, SIAM, Philadelphia, PA,
  2017.

\bibitem{siampaper}
{\sc S.~Buchwald, G.~Ciaramella, and J.~Salomon}, {\em Analysis of a greedy
  reconstruction algorithm}, SIAM J. Control Optim., 59 (2021), pp.~4511--4537.

\bibitem{spinpaper}
{\sc S.~Buchwald, G.~Ciaramella, J.~Salomon, and D.~Sugny}, {\em Greedy
  reconstruction algorithm for the identification of spin distribution}, Phys.
  Rev. A, 104 (2021), p.~063112.

\bibitem{CIARAMELLA2015213}
{\sc G.~Ciaramella and A.~Borzì}, {\em {SKRYN: A fast
  semismooth-Krylov–Newton method for controlling Ising spin systems}},
  Computer Physics Communications, 190 (2015), pp.~213--223.

\bibitem{Ciarlet}
{\sc P.~G. Ciarlet}, {\em Linear and Nonlinear Functional Analysis with
  Applications}, Applied mathematics, SIAM, Philadelphia, PA, 2013.

\bibitem{coron2007control}
{\sc J.~Coron}, {\em Control and Nonlinearity}, Math. Surveys Monogr., AMS,
  2007.

\bibitem{DeSchutter2000}
{\sc B.~De~Schutter}, {\em Minimal state-space realization in linear system
  theory: An overview}, J. Comput. Appl. Math., 121 (2000), pp.~331--354.

\bibitem{Donovan2014}
{\sc A.~Donovan and H.~Rabitz}, {\em Exploring the {H}amiltonian inversion
  landscape}, Phys. Chem., 16 (2014), pp.~15615--15622.

\bibitem{Fu_2017}
{\sc Y.~Fu and G.~Turinici}, {\em Quantum {H}amiltonian and dipole moment
  identification in presence of large control perturbations}, ESAIM: Contr.
  Optim. Ca., 23 (2017), pp.~1129--1143.

\bibitem{Geremia2001}
{\sc J.~M. Geremia and H.~Rabitz}, {\em Global, nonlinear algorithm for
  inverting quantum-mechanical observations}, Phys. Rev. A, 64 (2001),
  p.~022710.

\bibitem{Geremia2003}
{\sc J.~M. Geremia and H.~Rabitz}, {\em Optimal {H}amiltonian identification:
  The synthesis of quantum optimal control and quantum inversion}, J. Chem.
  Phys., 118 (2003), pp.~5369--5382.

\bibitem{Geremia2000}
{\sc J.~M. Geremia, W.~Zhu, and H.~Rabitz}, {\em Incorporating physical
  implementation concerns into closed loop quantum control experiments}, J.
  Chem. Phys., 113 (2000), pp.~10841--10848.

\bibitem{glaser_training_2015}
{\sc S.~Glaser and et~al}, {\em Training {S}chrödinger’s cat: quantum
  optimal control}, Eur. Phys. J. D, 69 (2015), p.~279.

\bibitem{horn_johnson_1991}
{\sc R.~A. Horn and C.~R. Johnson}, {\em Topics in Matrix Analysis}, Cambridge
  Univ. Press, 1991.

\bibitem{Juang1985}
{\sc J.~N. Juang and R.~S. Pappa}, {\em An eigensystem realization algorithm
  for modal parameter identification and model reduction}, J. Guid. Control
  Dynam., 8 (1985), pp.~620--627.

\bibitem{kaltenbacher2008iterative}
{\sc B.~Kaltenbacher, A.~Neubauer, and O.~Scherzer}, {\em Iterative
  Regularization Methods for Nonlinear Ill-Posed Problems}, De Gruyter, Berlin,
  New York, 2008.

\bibitem{kelley}
{\sc C.~T. Kelley}, {\em Iterative Methods for Optimization}, SIAM,
  Philadelphia, 1999.

\bibitem{lebris:ham_id}
{\sc C.~Le~Bris, M.~Mirrahimi, H.~Rabitz, and G.~Turinici}, {\em {{H}amiltonian
  identification for quantum systems: Well posedness and numerical
  approaches}}, {ESAIM: Contr. Optim. Ca.}, 13 (2007), pp.~378--395.

\bibitem{madaysalomon}
{\sc Y.~Maday and J.~Salomon}, {\em {A greedy algorithm for the identification
  of quantum systems}}, in {Proceedings of the 48th IEEE Conference on Decision
  and Control}, {}, {2009}, pp.~{375--379}.

\bibitem{rudin}
{\sc W.~Rudin}, {\em Real and Complex Analysis, 3rd Ed.}, McGraw-Hill, Inc.,
  USA, 1987.

\bibitem{Sontag1998}
{\sc E.~D. Sontag}, {\em Mathematical Control Theory: Deterministic Finite
  Dimensional Systems (2Nd Ed.)}, Springer-Verlag, Berlin, Heidelberg, 1998.

\bibitem{Wang}
{\sc Y.~{Wang}, D.~{Dong}, B.~{Qi}, J.~{Zhang}, I.~R. {Petersen}, and
  H.~{Yonezawa}}, {\em A quantum {H}amiltonian identification algorithm:
  Computational complexity and error analysis}, IEEE Trans. Autom. Control, 63
  (2018), pp.~1388--1403.

\bibitem{Whittlesey}
{\sc E.~F. Whittlesey}, {\em Analytic functions in {B}anach spaces},
  Proceedings of the American Mathematical Society, 16 (1965), pp.~1077--1083.

\bibitem{Xue2019}
{\sc S.~Xue, R.~Wu, D.~Li, and M.~Jiang}, {\em A gradient algorithm for
  {H}amiltonian identification of open quantum systems}, Phys. Rev. A, 103
  (2021), p.~022604.

\bibitem{Zhang2014}
{\sc J.~Zhang and M.~Sarovar}, {\em Quantum {H}amiltonian identification from
  measurement time traces}, Phys. Rev. Lett., 113 (2014), p.~080401.

\bibitem{Zhou2012}
{\sc W.~Zhou, S.~Schirmer, E.~Gong, H.~Xie, and M.~Zhang}, {\em Identification
  of {Markovian} open system dynamics for qubit systems}, Chinese Sci. Bull.,
  57 (2012), pp.~2242--2246.

\end{thebibliography}
	
\end{document}